\documentclass[a4paper,12pt,reqno]{amsart}
\usepackage{amsfonts,amssymb,hyperref,amsthm,enumerate,
	color,stmaryrd,pgf,tikz,comment,multirow}
\usepackage{amsmath}
\usepackage{tikz-cd}
\usepackage[left=2.6cm,right=2.6cm,top=3cm,bottom=3cm,bindingoffset=0cm]{geometry}
\usepackage{booktabs,caption}
\usepackage[flushleft]{threeparttable}

\newtheorem{theorem}{Theorem}[section]
\newtheorem{lemma}[theorem]{Lemma}
\newtheorem{proposition}[theorem]{Proposition}

\theoremstyle{definition}
\newtheorem{remark}{Remark}

\newtheorem*{case}{Case}
\newtheorem*{subcase}{Subcase}

\DeclareMathOperator{\Fix}{Fix}
\DeclareMathOperator{\Ker}{Ker}
\DeclareMathOperator{\Core}{Core}
\DeclareMathOperator{\Aut}{Aut}

\DeclareMathOperator{\Skew}{Skew}
\DeclareMathOperator{\Smooth}{Smooth}

\DeclareMathOperator{\SD}{SD}
\DeclareMathOperator{\CM}{CM}

\DeclareMathOperator{\ch}{\,char\,}
\DeclareMathOperator{\av}{av}

\title{Classification of regular Cayley maps of skew-type three on semidihedral groups}
\author[K. Hu\and T. Qiu]
{ Kan Hu$^{*}$\and Tao Qiu}
\address{K. Hu,
\newline\indent
Department of Mathematics, Zhejiang Ocean University, Zhoushan, Zhejiang 316022, P.R. China
}
\email{hukan@zjou.edu.cn}

\address{T. Qiu,
\newline\indent
Department of Mathematics, Zhejiang Ocean University, Zhoushan, Zhejiang 316022, P.R. China
}
\email{2469660019@qq.com}

\thanks{This work was supported by National Natural
Science Foundation of China (12471332).
\newline\indent
$^{\ast}$ Corresponding author e-mail: hukan@zjou.edu.cn
}
\keywords{graph embedding, skew morphism, Cayley map, skew-type,}
\subjclass[2020]{05E18, 20B25, 57M15}
\begin{document}
\maketitle

\begin{abstract}
It is well known that every regular Cayley map $M = \CM(G,X,p)$ on a finite group $G$ with 
respect to an inverse-closed generating set $X$ of $G$ and a specified cyclic permutation 
$p$ on $X$ corresponds to a skew morphism $\varphi$ on $G$ such that the restriction of 
$\varphi$ to $X$ is $p$. The skew-type of the map $M$ is defined as the index $[G:\Ker \varphi]$, 
which equals the number of distinct values in $\mathbb{Z}_{|\varphi|}$ taken by the associated 
power function $\pi$ of the skew morphism $\varphi$. In this paper, we develop a covering theory of  
skew morphisms and as an application we provide a classification of regular Cayley maps of skew-type three on the semidihedral groups.
\end{abstract}

\section{Introduction}
Throughout the paper, all groups and graphs are assumed finite unless stated otherwise. A \textit{map} 
$M$ is an embedding $i:\Gamma\hookrightarrow\mathbb{S}$ of a connected graph $\Gamma$ 
into a closed surface $\mathbb{S}$ such that each component of $\mathbb{S}\setminus i(\Gamma)$ 
is homeomorphic to an open disc. The map $M$ is \textit{orientable} if its underlying surface $\mathbb{S}$ 
is orientable; otherwise it is called \textit{non-orientable}. In this paper, all maps are assumed to be orientable.
 In this case, an \textit{automorphism} of the orientable map $M$ is a permutation of the arcs of the embedded 
 graph $\Gamma$ that respects the graph structure and extends to an orientation-preserving 
 self-homeomorphism of the carrier surface $\mathbb{S}$. It is well known that the automorphism 
 group $\Aut(M)$ of $M$ acts semiregularly on the arcs. If this action is transitive (hence regular), 
 then the map itself is called a \textit{regular} map.

An important problem in topological graph theory is the construction and classification of regular 
maps, usually under certain reasonable conditions on the embedded graphs, the underlying surfaces, 
or the automorphism groups. Regular Cayley maps on a given group, due to their inherent highly 
symmetric structure, have attracted much attention~\cite{RSJTW2005}.

A \textit{Cayley map} $M=\CM(G,X,p)$ on a group $G$ with respect to an inverse-closed generating 
subset $X\subseteq G\setminus\{1_G\}$ and a cyclic permutation $p$ on $X$ is the embedding of 
the Cayley graph $\Gamma:=\mathrm{Cay}(G,X)$ into an orientable closed surface $\mathbb{S}$ 
such that its local rotation $\rho$ at each vertex determined by the embedding is consistent with 
the permutation $p$, namely,
\[
\rho(g,gx)=(g,gp(x))\quad\text{for all }g\in G\text{ and }x\in X.
\]
The left translation $L_a:g\mapsto ag$ ($a\in G$) induces a subgroup $G_L$ of $\Aut(M)$ acting 
regularly on the vertices of $M$. Regarding $G_L$ as a permutation group on $G$, it follows from 
the Frattini argument that $\Aut(M)=G_L C$ is an exact product of $G_L$ and the (cyclic) stabilizer 
$C:=(\Aut M)_{1_G}$ of the vertex $1_G$. A chosen generator $c$ of $C$ determines a \textit{skew morphism} 
$\varphi$ on $G$ via the identity $cg=\varphi(g)c^{\pi(g)}$ satisfying the following defining identities: 
$\varphi(1_G)=1$, and $\varphi(gh)=\varphi(g)\varphi^{\pi(g)}(h)$ for all $g,h\in G$, where 
$\pi:G\to\mathbb{Z}_{|\varphi|}$ is the associated \textit{power function}. A seminal result by Jajcay 
and \v{S}ir\'{a}\v{n} shows that the Cayley map $M$ is regular iff the restriction of 
$\varphi$ to $X$ is $p$, i.e., $\varphi|_X=p$~\cite{JS2002}. Thus, for a given group $G$, the 
determination of regular Cayley maps $M$ on $G$ is equivalent to the determination of 
\textit{Cayley skew morphisms} $\varphi$ on $G$, that is, skew morphisms on $G$ with 
an inverse-closed generating orbit.

As noted, the automorphism group $\Aut(M)$ of a regular Cayley map $M$ on $G$ can be 
viewed as an exact product of $G$ and a cyclic \textit{core-free} subgroup $C$, which is referred 
to as the \textit{skew-product group} of $G$. In general, an exact product $A=GC$ of a subgroup $G$ and 
a cyclic (not necessarily core-free) subgroup $C$ is also called a \textit{cyclic complementary extension} of $G$. In this case,
a chosen generator $c$ of $C$ induces a skew morphism $\varphi$ on $G$ and an 
\textit{extended power function} of $\varphi$; conversely, given a skew morphism $\varphi$ of 
$G$ and an associated extended power function, there is a canonical way to construct a cyclic 
complementary extension of $G$~\cite{HJ2026}. It is therefore natural to investigate cyclic 
complementary extensions of a given group; see~\cite{CJT2016,DH2019,HJ2026,HKK2022,HKK2025,KN2011,KN2017,ML2024,Yu2026,ZD2016} for results
in this broader direction.

Among other things, the kernel $\Ker \varphi$ of a skew morphism $\varphi$ on $G$, a subgroup 
consisting of elements $g\in G$ with $\pi(g)\equiv1\pmod{|\varphi|}$, plays an important role in 
the study of regular Cayley maps and skew morphisms. The index $[G:\Ker \varphi]$, 
called the \textit{skew-type} of $M$ or $\varphi$, is equal to the number of distinct values taken by 
the power function $\pi$ in $\mathbb{Z}_{|\varphi|}$. This is an important arithmetic invariant of 
skew morphisms, and it is clear that the skew-type is smaller the closer the skew morphism is to being an automorphism.

 A central problem in this area is the construction and classification of regular Cayley maps for a given family of groups 
$G$. For cyclic groups $\mathrm{C}_n$, a complete solution was provided by Conder and Tucker~\cite{CT2014}. 
Considerable effort was subsequently devoted to the dihedral groups 
$\mathrm{D}_{2n}$~\cite{KK2016,KK2017,KKF2006,WF2005,Zhang2015a,Zhang2015b,ZD2016}, until the problem was eventually resolved by Kov\'acs and Kwon~\cite{KK2021}. More recently, the problem has also been settled for the elementary abelian 
$p$-groups $\mathbb{Z}_p^n$ by Du, Luo and Yu~\cite{DLY2025}.

Only partial results have been obtained for the classification of regular Cayley maps on other families of groups, 
including abelian groups~\cite{CJT2007-1,CJT2007-2,DYL2023}, certain metacyclic groups 
such as the generalized quaternion groups $\mathrm{Q}_{4n}$~\cite{WF2005,KO2008}, and the 
semidihedral groups $\SD_{8n}$~\cite{Oh2009}. In most cases the authors focused on balanced 
or $t$-balanced regular Cayley maps whose skew-types are either $1$ or $2$.

In this paper, we focus on extending the classification of regular Cayley maps to the semidihedral group $\SD_{8n}$, given by the presentation
\begin{equation}\label{Pre}
\SD_{8n}=\langle a,b\mid a^{4n}=b^2=1,\; bab^{-1}=a^{2n-1}\rangle,\quad n\geq 2.
\end{equation}
Although Yu~\cite{Yu2026} recently characterized the cyclic complementary extensions of $\SD_{8n}$, the regular Cayley maps themselves have yet to be fully classified, apart from the $t$-balanced examples previously obtained by Oh~\cite{Oh2009}. To complete this classification, the natural next step is to consider regular Cayley maps of skew-type $3$. To this end, we adapt the methods used in \cite{KMM2013, Zhang2015a} for the dihedral group $\mathrm{D}_{2n}$ to this setting. More precisely, we develop a comprehensive theory of coverings of skew morphisms and establish the following classification theorem for regular Cayley maps of skew-type $3$ on the semidihedral group.

\begin{theorem}\label{Main}
Let $M=\CM(\SD_{8n},X,p)$ be a regular Cayley map of skew-type $3$ on the semidihedral group 
$\SD_{8n}$. Then $n$ is divisible by $3$, and, up to isomorphism, $M$ belongs to one of the two families:
\begin{enumerate}[\rm(a)]
\item For $n=3$, $M\cong \CM(\SD_{24}, X, p)$, where
\[
X=\{a, a^3, a^5, a^7, a^9, a^{11}, ab, a^3b, a^5b, a^7b, a^9b, a^{11}b\}
\]
and
\[
p = (a^3, ab, a^5, a^3b, a^7, a^5b, a^9, a^7b, a^{11}, a^9b, a, a^{11}b).
\]
\item For all $n$ divisible by $3$, $M\cong M(n;t):=\CM(\SD_{8n},X,p)$, where $t\in\mathbb{Z}_{4n}^*$ 
is an integer of odd multiplicative order $k:=o_{4n}(t)$ in $\mathbb{Z}_{4n}$ such that 
$t \equiv 1 \pmod{6}$, $X=\{x_i\}_{0\leq i<4k}$ and $p=(x_i)_{0\leq i<4k}$ with each $x_i$ given by
\begin{equation}\label{Perm0}
x_i=
\begin{cases}
a^{t^i}, & \text{if } i\equiv0\pmod{4},\\[2pt]
a^{-t^{i}}, & \text{if } i\equiv1\pmod{4},\\[2pt]
a^{t^i+3}b, & \text{if } i\equiv2\pmod{4},\\[2pt]
a^{-t^i+3+2n}b, & \text{if } i\equiv3\pmod{4}.
\end{cases}
\end{equation}
\end{enumerate}
Moreover, maps from distinct families, or from the same family with distinct parameters, are mutually non-isomorphic.
\end{theorem}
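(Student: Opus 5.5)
The plan is to follow the strategy of \cite{KMM2013,Zhang2015a} for dihedral groups. Let $\varphi$ be the Cayley skew morphism of $M$, with power function $\pi$ and kernel $K=\Ker\varphi$ of index $3$ in $G=\SD_{8n}$. First, $K$ has index $3$ and $G/\langle a\rangle$ has order $2$, so $K\langle a\rangle=G$. Hence $K\cap\langle a\rangle=\langle a^3\rangle$, $3\mid 4n$ and therefore $3\mid n$. Moreover $K$ contains an involution or an element $a^jb$ and is isomorphic to a semidihedral (or dihedral or quaternion) group of order $8n/3$. I will also use the standard facts: $\pi$ is constant on right cosets of $K$; $K$ is not normal in general; the smooth part $\Smooth\varphi=\{g:\pi(\varphi(g))=\pi(g)\}$ contains $K$; and $\varphi$ restricts to $K$ as an automorphism-like map. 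I will first show that the core $\Core_G(K)$ (of index dividing $6$) is $\varphi$-invariant. This makes it possible to pass to the quotient skew morphism on $G/N$ for a suitable $\varphi$-invariant normal subgroup $N\le\langle a^3\rangle$. This is where the covering theory developed in the paper comes in. It lets one lift skew morphisms from a small quotient, of bounded order such as $\SD_{24}$, $\mathrm D_{12}$ or $\mathbb Z_6$-type quotients, back to $\SD_{8n}$.

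Second, I will determine the quotient. Reducing modulo $N=\langle a^{12}\rangle$ or $\langle a^6\rangle\cap\Core$ should leave a group of order $24$ or $48$. On that group the skew-type-$3$ Cayley skew morphisms can be listed by hand, or found as the unique candidates: the quotient skew morphism must still have skew-type $3$, or drop to skew-type $1$. I expect two outcomes. In the first, the quotient is already $\SD_{24}$ with the sporadic skew morphism of order $12$ from (a). In the second, the quotient skew morphism has a specific form: $\varphi(a)=a^{-1}$-type on the cyclic part, with $\pi$ taking values $1,t,\dots$ determined by cosets of $K$.

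Third, I will lift. Using the covering results, every lift of the quotient skew morphism to $\SD_{8n}$ is determined by the images $\varphi(a)$ and $\varphi(b)$. The defining identity $\varphi(gh)=\varphi(g)\varphi^{\pi(g)}(h)$, applied to $a^4$ and to $a^ib$, forces $\varphi(a^{4})=a^{4t}$ with $t\in\mathbb Z_{4n}^*$. It also forces $t\equiv1\pmod 6$ (so that $a^3\in K$ acts compatibly) and $o_{4n}(t)$ odd, so that the orbit length is $4k$ rather than $2k$ and the orbit is inverse-closed. From these one reads off the orbit $x_i$ in \eqref{Perm0}. For the sporadic branch, I will show that no proper lift beyond $n=3$ exists: a lift would force the $\SD_{24}$ pattern to be periodic modulo a larger cyclic kernel, contradicting skew-type $3$. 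Conversely, I will check directly that each $M(n;t)$ comes from a skew morphism of skew-type exactly $3$. To do this I will write down $\varphi$ on all of $G$ via $\varphi(a^i)$ and $\varphi(a^ib)$ and verify the identity.

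Finally, for non-isomorphism I will use the fact that two regular Cayley maps on $G$ are isomorphic iff the skew morphisms are conjugate by an automorphism of $G$. Invariants distinguish the cases: $|\varphi|=4k$, and the parameter $t$ is recovered as the power function value, or the exponent in $\varphi^4(a)=a^{t^4}$. The orbit in (a) has size $12$ with a different distribution of elements from $\langle a\rangle$ and from $\langle a\rangle b$, so it is not among the $M(3;t)$. The main obstacle is the second step: showing the existence of a $\varphi$-invariant normal subgroup small enough to reduce to a bounded quotient, and then classifying the quotient skew morphisms. I would first try to prove $\langle a^{6}\rangle$, or $\Core_G(K)\cap\langle a\rangle$, is $\varphi$-invariant, using that the subgroup generated by elements of $K$ whose $\varphi$-orbits stay in $K$ is normal.
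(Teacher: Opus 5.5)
Your outline (pass to a quotient by a $\varphi$-invariant normal subgroup, classify there, lift back) is the paper's strategy. There is, however, a genuine gap at exactly the step you flag as the main obstacle, and your first plan for that step is false. The three index-$3$ subgroups of $\SD_{8n}$ are conjugate under $a$, so $\Core_G(K)=\langle a^3\rangle$. This subgroup is \emph{not} $\varphi$-invariant in general: for the map in (a), $\varphi(a^3)=p(a^3)=ab$. Your fallback (the elements of $K$ whose $\varphi$-orbits stay in $K$) just describes $\Core\varphi$, and its normality says nothing about its size.

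The missing idea is short. Since $\pi\equiv1$ on $K$, the restriction $\varphi|_K$ is a monomorphism, so $\varphi(\langle a^3\rangle)$ is a cyclic subgroup of order $4m$, where $n=3m$. Because $b$-type elements have order $2$ or $4$, for $m\ge 2$ this forces $\varphi(\langle a^3\rangle)=\langle a^3\rangle$. Hence $\Core\varphi=\langle a^3\rangle$ and the quotient is $\mathrm{D}_6$. For $m=1$ one may instead get $\langle a^ib\rangle$ with $i$ odd, which still contains $a^6$, so $\Core\varphi=\langle a^6\rangle$ and the quotient is $\mathrm{D}_{12}$. This, not a ``periodicity'' argument, is why the sporadic map exists only for $n=3$. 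You then still need that the quotient keeps skew-type $3$ (Lemma~\ref{prop:quotient-skewtype}); after that, Proposition~\ref{D2n} and Lemma~\ref{auto} pin down $\bar p$ up to automorphisms that lift. Quotienting by $\langle a^{12}\rangle$ instead leaves an unsolved classification problem on a group of order $24$.

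The lifting step, which is the bulk of the paper's argument, is only asserted in your sketch, and several concrete claims in it are wrong. In $M(n;t)$:
\begin{itemize}
\item one has $\varphi(a^4)=a^{2n-4t}$, not $a^{4t}$;
\item $\pi$ takes the values $1$, $2k+1$, and $k+1$ or $3k+1$, which depend only on $k$, so $t$ cannot be read off from $\pi$;
\item $K=\langle a^3,ab\rangle$ is not contained in $\{g:\pi(\varphi(g))=\pi(g)\}$, since $x_2\in K$ while $\varphi(x_2)=x_3\notin K$.
\end{itemize}

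What actually drives the lift is the following chain.
\begin{enumerate}
\item From $\bar p=(\bar a,\bar a^{-1},\bar a\bar b,\bar a^{-1}\bar b)$ one gets $\Ker\varphi=\langle a^3,ab\rangle$, period $4$ and $|X|=4k$.
\item Write $\pi$ via $\chi$ and use $\pi(a^{-1}b)=\pi(a^{-1})\pi(b)$. This comes from Proposition~\ref{Formula}(b), since $\pi$ is constant on the orbit of $b$, and it forces $k$ odd.
\item The common difference of $\chi$ (Lemma~\ref{CD}) and the inverse-closedness of $X_2$ (Proposition~\ref{Lift}) force every $b$-type element of $X$ to be an involution.
\item Hence $\varphi^4\in\Aut(\SD_{8n})$ and $\varphi^k$ is a skew morphism.
\item Then $\varphi=\alpha\beta$, with $\alpha$ the valence-$4$ skew morphism from the case $k=1$ and $\beta\colon a\mapsto a^t,\ b\mapsto a^sb$ a commuting automorphism of order $k$. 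The commutation relations give $t\equiv 1\pmod 6$ and $s\equiv 3-3t\pmod{4n}$.
\end{enumerate}
The case $n=3$ additionally requires a separate analysis of the $\mathrm{D}_{12}$ quotients.

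Your non-isomorphism argument is sound: valence separates (a) from $M(3;1)$, and $\varphi^4|_{\langle a\rangle}\colon a\mapsto a^{t^4}$ recovers $t$ among units of odd order. But the existence half of the classification is not established by the proposal.
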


\begin{remark}
Let $\varphi$ be the corresponding skew morphism of the maps in Theorem~\ref{Main}.   
The single map in (a) is exceptional in the sense that $\Core\varphi=\langle a^6\rangle$, contrary to  
  the infinite family of maps in (b)  where $\Core\varphi=\langle a^3\rangle$; see Lemma~\ref{lemma Core}.
\end{remark}

The paper is organized as follows. In the next preliminary section, we collect necessary results
to be used later, with emphasis on developing a theory on
coverings of skew morphisms and regular Cayley maps. In Section~3, we 
 present characterization lemmas on the subgroups of $\SD_{8n}$ and the core $\Core\varphi$ of a 
 skew morphism of skew-type $3$ on $\SD_{8n}$. In Section~4, we verify that the maps stated in 
 Theorem~\ref{Main} are indeed regular Cayley maps of skew-type $3$. Finally, in Section~5, we 
 present a proof of Theorem~\ref{Main}.

\section{Preliminaries}
In this section, we collect preliminary results on skew morphisms and regular Cayley maps for 
further reference, with emphasis on developing a comprehensive theory on coverings of skew morphisms.

\subsection{Skew morphisms}
Let $G$ be a finite group. A \textit{skew morphism} on $G$ is a permutation $\varphi$ on $G$ fixing 
the identity element for which there exists an integer-valued function $\pi:G\to\mathbb{Z}_{|\varphi|}$, 
called the \textit{power function} associated with $\varphi$, such that
\[
\varphi(gh)=\varphi(g)\varphi^{\pi(g)}(h)\quad\text{for all }g,h\in G.
\]
Suppose that $\varphi$ is a skew morphism on a group $G$ with power function $\pi:G\to\mathbb{Z}_{m}$, 
where $m:=|\varphi|$. If $\delta:G\to H$ is an isomorphism, then $\psi:=\delta\varphi\delta^{-1}$ is a skew 
morphism on $H$ with power function $\pi\delta^{-1}$.
More precisely, a skew morphism $\varphi$ on $G$ is said to be \textit{equivalent} to a skew morphism $\psi$ on $H$ if
there is an isomorphism $\delta:G\to H$ such that $\psi\delta=\delta\varphi$.

Now suppose that $\varphi$ and $\psi$ are skew morphisms on $G$, and $\Delta\leq\Aut(G)$. 
If there exists $\delta\in \Delta$ with $\delta\varphi=\psi\delta$, then $\varphi$ is said to be 
\textit{$\Delta$-equivalent} to $\psi$. It is clear that if $\Xi\leq \Delta\leq\Aut(G)$, 
and $\varphi$ and $\psi$ are $\Xi$-equivalent, then they are also $\Delta$-equivalent (but the
converse may not be true). In particular, if $\varphi$ and $\psi$ are $\Aut(G)$-equivalent,
then we will simply say that they are \textit{equivalent}.
Thus, the automorphism group $\Aut(G)$ acts by conjugation on the set $\Skew(G)$ of
skew morphisms on $G$.

A skew morphism $\varphi$ on $G$ is called a \textit{covering} of a skew morphism
$\psi$ on $H$, denoted by $\nu:\varphi\to\psi$, if there is an  epimorphism $\nu:G\to H$ from
 $G$ onto $H$ such that  $\nu\varphi=\psi\nu$. In this case, we also say 
 that $\varphi$ is a \textit{lifting} of $\psi$, and $\psi$ is a \textit{projection} of $\varphi$. 
Note that $\Ker \nu$ is a $\varphi$-invariant normal subgroup of $G$, and the $\psi$-invariant (normal) 
subgroups $S$ of $H$ are in bijective correspondence with the $\varphi$-invariant (normal) subgroups 
$\nu^{-1}(S)$ of $G$ containing $\Ker \nu$~\cite{WHYZ2019}. 

Conversely,  if $N$ is a $\varphi$-invariant normal subgroup of $G$ for
some skew morphism $\varphi$ on $G$ with power function $\pi$, 
then the mapping $\overline{\varphi}:\overline{G}\to\overline{G}$ 
defined by $\overline{\varphi}( g N)=\varphi(g)N$ ($g\in G$) is a skew morphism on the quotient group 
$\overline{G}:=G/N$. This is called the \textit{quotient skew morphism} induced by $N$.
Note that if $\nu:G\to\overline{G}$ is the natural epimorphism, then 
$\nu\varphi(g)=\varphi(g)N=\overline{\varphi}\nu(g)$ for all $g\in G$,
so $\nu$ is a covering map from $\varphi$ to $\overline{\varphi}$. It follows that
 $\nu=\nu\varphi^m=\overline{\varphi}^m\nu$,  where $m:=|\varphi|$, so $\overline{\varphi}^m=\mathrm{id}_{\overline{G}}$. 
 Therefore, the order
  of $\overline{\varphi}$ divides the order of $\varphi$, and it is easy to see that 
the  power function  $\bar\pi:\overline{G}\to\mathbb{Z}_{\overline{m}}$  of $\overline{\varphi}$ 
is determined by $\bar\pi(g N)\equiv\pi(g)\pmod{\overline{m}}$, where $\overline{m}:=|\overline{\varphi}|$.

Suppose that $\varphi$ and $\varphi'$ are skew morphisms on $G$, $\psi$ 
and $\psi'$ are skew
morphisms on $H$, and  $\nu: \varphi\to\psi$ and $\nu': \varphi'\to\psi'$ are coverings. 
The two coverings $\varphi$ and $\varphi'$ are said to be \textit{equivalent} if there exist $\delta\in\Aut(G)$
and $\tau\in\Aut(H)$ such that $\nu'\delta\varphi\delta^{-1}=\tau\psi\tau^{-1}\nu':$
\[
\begin{tikzcd} \varphi \ar[r, "\delta"] \ar[d, "\nu"'] & \varphi' \ar[d, "\nu'"] \\ \psi \ar[r, "\tau"] & \psi' \end{tikzcd}
\]
where the horizontal  arrows $\varphi\stackrel{\delta}\to\varphi'$ and $\psi\stackrel{\tau}\to\psi'$, and the vertical arrows 
$\varphi\stackrel{\nu}\to\psi$ and $\varphi'\stackrel{\nu'}\to\psi'$ are understood as the following equations:
\[
\delta\varphi=\varphi'\delta,\quad\tau\psi=\psi'\tau,\quad \nu\varphi=\psi\nu \quad\text{and}\quad\nu'\varphi'=\psi'\nu'.
\]
In particular, if $\psi=\psi'$ so that $\varphi$ and $\varphi'$ are coverings of a common skew morphism
$\psi$ on $H$, then the above condition for the equivalence of the coverings $\varphi$ and $\varphi'$  reduces
to the existence of an automorphism $\delta$ of $G$ such that $\varphi'=\delta\varphi\delta^{-1}$.

\begin{proposition}
Let $\varphi$ and $\psi$ be equivalent skew morphisms on $G$.
If $N$ is a normal subgroup of $G$ which is both $\varphi$-invariant and $\psi$-invariant,  
then the quotient skew morphisms $\overline{\varphi}$ and $\overline{\psi}$ on $G/N$  induced by $N$ 
 are equivalent. In particular, if $N$ is a characteristic subgroup of $G$, then for every
 skew morphism $\varphi$ of $G$ with $\varphi(N)=N$,  the
 equivalence class containing $\varphi$ is projected via the covering $\nu:G\to G/N$ 
 to the equivalence class containing $\overline{\varphi}$.

\end{proposition}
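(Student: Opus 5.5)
The plan is to transport everything along the automorphism that realizes the equivalence. The central step is to choose that automorphism so that it stabilizes $N$. Once this is done, it descends to an automorphism of $G/N$ conjugating $\overline{\varphi}$ to $\overline{\psi}$. The second sentence of the statement is then the special case in which every automorphism stabilizes $N$.

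\emph{Step 1 (descending a good automorphism).} Suppose $\delta\in\Aut(G)$ satisfies $\delta\varphi=\psi\delta$ and $\delta(N)=N$. Then $\delta$ induces $\bar\delta\in\Aut(G/N)$ with $\bar\delta(gN)=\delta(g)N$. For every $g\in G$,
\[
\bar\delta\,\overline{\varphi}(gN)=\delta\varphi(g)N=\psi\delta(g)N=\overline{\psi}\,\bar\delta(gN).
\]
Hence $\bar\delta\overline{\varphi}=\overline{\psi}\bar\delta$, so $\overline{\varphi}$ and $\overline{\psi}$ are equivalent. Nothing about power functions is needed here, since equivalence only involves the permutations themselves.

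\emph{Step 2 (reduction to finding a good $\delta$).} Fix one $\delta_0$ with $\delta_0\varphi=\psi\delta_0$. The full set of automorphisms conjugating $\varphi$ to $\psi$ is the coset $\delta_0 C$, where $C:=C_{\Aut(G)}(\varphi)=\{\gamma\in\Aut(G):\gamma\varphi=\varphi\gamma\}$. We therefore need $\gamma\in C$ with $\delta_0\gamma(N)=N$, that is, $\gamma(N)=\delta_0^{-1}(N)$. Write $N':=\delta_0^{-1}(N)$. Then $\varphi(N')=\delta_0^{-1}\psi(N)=N'$, because $N$ is $\psi$-invariant. So $N$ and $N'$ are both $\varphi$-invariant normal subgroups of $G$, of the same order, and they are swapped by an automorphism.

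\emph{Step 3 (the main obstacle).} The remaining task is to show that $N$ and $N'$ lie in the same $C$-orbit on $\varphi$-invariant normal subgroups. This is the only nonformal point, and I expect it to be the hard one.

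\begin{itemize}
\item First I would try the elements of $C$ coming from powers of $\delta_0$ combined with $\varphi$-automorphisms. The element $\delta_0^{-1}\psi\delta_0=\varphi$ is trivial, but composites such as $\delta_0^{-j}$ applied to $\psi$-invariant subgroups produce a chain $N,\delta_0^{-1}(N),\delta_0^{-2}(N),\dots$ of $\varphi$-invariant subgroups.
\item Next I would use the finite order of $\delta_0$ and an orbit-counting argument to find $\gamma\in C$ realizing the move from $N$ to $N'$.
\end{itemize}

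If this orbit argument breaks down, the fallback is the weaker conclusion of Step 1 with $\delta_0$ read as an isomorphism $G/N'\to G/N$. That gives equivalence in the general sense, between $\overline{\varphi}$ on $G/N'$ and $\overline{\psi}$ on $G/N$. It falls short of the statement unless $N'=N$.

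\emph{Step 4 (characteristic case).} If $N$ is characteristic, then $\delta(N)=N$ for every $\delta\in\Aut(G)$, so Step 1 applies directly with no choice needed.

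\begin{itemize}
\item For every $\psi=\delta\varphi\delta^{-1}$ in the class of $\varphi$, we have $\psi(N)=\delta\varphi\delta^{-1}(N)=N$, so $N$ is $\psi$-invariant and $\overline{\psi}$ is defined.
\item By Step 1, $\overline{\psi}=\bar\delta\,\overline{\varphi}\,\bar\delta^{-1}$, so $\overline{\psi}$ lies in the class of $\overline{\varphi}$.
\item Since $\nu\psi=\overline{\psi}\nu$, the covering $\nu\colon G\to G/N$ therefore maps the entire equivalence class of $\varphi$ into the equivalence class of $\overline{\varphi}$, as claimed.
\end{itemize}
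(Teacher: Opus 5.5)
Your Steps 1, 2 and 4 are correct. Step 4 is exactly the paper's argument for the characteristic case. Read ``projected to'' as ``into'': that is what you prove and what the paper uses, while ``onto'' can fail when $\Phi(\Aut(G))\neq\Aut(G/N)$, cf.\ Lemma~\ref{auto}(b).

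The genuine gap is Step 3, and it cannot be filled, because the first sentence of the statement is false as stated. Take $G=\mathbb{Z}_3\times\mathbb{Z}_3$ with
$\varphi(x,y)=(x,-y)$, $\psi(x,y)=(-x,y)$, $\delta(x,y)=(y,x)$, and $N=\mathbb{Z}_3\times\{0\}$.
\begin{itemize}
\item These maps are automorphisms, hence skew morphisms with $\pi\equiv1$, and $\delta\varphi=\psi\delta$.
\item $N$ is normal and invariant under both $\varphi$ and $\psi$.
\item On $G/N\cong\mathbb{Z}_3$ one gets $\overline{\psi}=\mathrm{id}$ and $\overline{\varphi}=-\mathrm{id}$.
\item The identity is conjugate only to itself, so $\overline{\varphi}$ and $\overline{\psi}$ are not equivalent.
\end{itemize}
In your terms, $C_{\Aut(G)}(\varphi)$ is the diagonal subgroup of $\mathrm{GL}_2(3)$, which fixes both coordinate axes. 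So $N$ and $N'=\delta_0^{-1}(N)=\{0\}\times\mathbb{Z}_3$ lie in different $C$-orbits, and no orbit-counting argument can produce the $\gamma$ you need. Indeed, every automorphism conjugating $\varphi$ to $\psi$ is anti-diagonal and moves $N$.

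The paper's proof passes over exactly this point with the unjustified sentence ``since $N$ is both $\varphi$-invariant and $\psi$-invariant, we have $\delta(N)=N$''. That sentence already fails for $\varphi=\psi=\mathrm{id}$ with a $\delta$ that does not normalize $N$. The example above shows that no choice of $\delta$ rescues the conclusion in general. So your suspicion was well founded.

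Your fallback is the correct general statement: $\delta_0$ induces an isomorphism $G/\delta_0^{-1}(N)\to G/N$ carrying the quotient of $\varphi$ by $\delta_0^{-1}(N)$ to $\overline{\psi}$. The first part becomes true if you add the hypothesis that some $\delta$ realizing the equivalence satisfies $\delta(N)=N$, which is your Step 1; this holds in particular when $N$ is characteristic. That is the only case the paper needs, since the projection is applied only to $N=\langle a^m\rangle$, which is characteristic in $\SD_{8n}$. So drop Step 3 and state the first part under that extra hypothesis, or only for characteristic $N$.
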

\begin{proof}
Since $\varphi$ and $\psi$ are equivalent, there exists $\delta\in\Aut(G)$ with $\delta\varphi=\psi\delta$.
Since $N\lhd G$ is both $\varphi$-invariant and $\psi$-invariant, we have $\delta(N)=N$. It is easy to
prove that the mapping $\delta^*:gN\to\delta(g)N$ is an automorphism of $G/N$ with $\delta^*\overline{\varphi}=\overline{\psi}\delta^*$
Thus, $\overline{\varphi}$ and $\overline{\psi}$ are equivalent. Moreover, if $N\ch G$ and $\varphi(N)=N$, then for 
every skew morphism $\psi$ equivalent to $\varphi$, we have $\psi(N)=N$, so $\overline{\psi}$ is equivalent to $\overline{\varphi}$,
as required.
\end{proof}

The \textit{kernel} of a skew morphism $\varphi$ on $G$ with power function $\pi:G\to\mathbb{Z}_{|\varphi|}$
is the subgroup $\Ker\varphi$  of $G$ defined by
\[
\Ker \varphi = \{ g \in G \mid \pi(g) \equiv 1 \pmod{|\varphi|} \}.
\]
The index $[G:\Ker \varphi]$ is called the \textit{skew-type} of $\varphi$~\cite{Zhang2015a}. 
It is evident that a skew morphism  is an automorphism iff it has skew-type $1$. Thus skew 
morphisms of small skew-type are closer to being group automorphisms. Moreover, 
if $\varphi$ and $\psi$ are equivalent skew morphisms, then they have the same skew-type.
Thus, the skew-type is an \textit{invariant}  for the category of skew morphisms, 
that is, a property that remains unchanged under equivalence of skew morphisms.

The following results are fundamental.
\begin{proposition}[\cite{JS2002}]\label{Formula}
Let $\varphi$ be a skew morphism of a finite group $G$ and let $\pi:G\to\mathbb{Z}_m$ be its power function, where $m:=|\varphi|$. Then the following hold for all $g,h\in G$:
\begin{enumerate}[\rm(a)]
\item $\pi(g) \equiv \pi(h)\pmod{m}$ iff $gh^{-1}\in\Ker \varphi$.
\item $\pi(gh) \equiv \sum_{i=0}^{\pi(g)-1} \pi(\varphi^i(g)) \pmod{m}$.
\item $\varphi^k(gh)= \varphi^k(g) \varphi^{\sum_{i=0}^{k-1} \pi(\varphi^i(g))}(h)$ for any integer $k$.
\end{enumerate}
\end{proposition}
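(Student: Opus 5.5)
The plan is to derive everything from the defining identity $\varphi(gh)=\varphi(g)\varphi^{\pi(g)}(h)$, together with the fact that $\varphi$ has order exactly $m$. That fact lets us conclude $s\equiv t\pmod m$ whenever $\varphi^s(x)=\varphi^t(x)$ for all $x\in G$. I would prove (c) first, then (b), then (a), since (a) uses the case $\pi(k)=1$ of (b).

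For (c) with $k\ge 0$ I would induct on $k$; the case $k=0$ is trivial. Assuming the formula for $k$, apply $\varphi$ to $\varphi^k(g)\cdot\varphi^{s_k}(h)$, where $s_k:=\sum_{i=0}^{k-1}\pi(\varphi^i(g))$. The defining identity gives
\[
\varphi^{k+1}(gh)=\varphi^{k+1}(g)\,\varphi^{\pi(\varphi^k(g))+s_k}(h)=\varphi^{k+1}(g)\,\varphi^{s_{k+1}}(h).
\]
To pass to arbitrary integers $k$, I first take $k=m$: this gives $gh=g\varphi^{s_m}(h)$ for all $h$, hence $s_m\equiv 0\pmod m$. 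Since $\varphi^i(g)$ depends only on $i$ modulo $m$, the exponent $s_k$ is then well defined modulo $m$ as a function of $k$ modulo $m$. So the formula for negative $k$ follows from the formula for $k+\ell m\ge 0$.

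For (b) I would expand $\varphi(ghx)$ in two ways. Grouping as $(gh)x$ gives $\varphi(gh)\varphi^{\pi(gh)}(x)$. Grouping as $g(hx)$ gives $\varphi(g)\varphi^{\pi(g)}(hx)$, and by (c) this equals
\[
\varphi(g)\varphi^{\pi(g)}(h)\,\varphi^{\sum_{i<\pi(g)}\pi(\varphi^i(h))}(x).
\]
Cancelling $\varphi(gh)=\varphi(g)\varphi^{\pi(g)}(h)$ leaves $\varphi^{\pi(gh)}(x)=\varphi^{\sum_{i<\pi(g)}\pi(\varphi^i(h))}(x)$ for all $x$. Hence $\pi(gh)\equiv\sum_{i=0}^{\pi(g)-1}\pi(\varphi^i(h))\pmod m$. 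The computation naturally produces the arguments $\varphi^i(h)$ rather than $\varphi^i(g)$. I expect the intended (standard) formula is this one, and I would state and prove it in that form. This bookkeeping of which element the summands involve is the only delicate point.

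For (a), one direction is immediate from (b): if $k:=gh^{-1}\in\Ker\varphi$, then $\pi(g)=\pi(kh)=\pi(h)$, because the sum has the single term $i=0$. For the converse, suppose $\pi(g)=\pi(h)=r$. The defining identity applied to $g\cdot(g^{-1}y)$ gives $\varphi^r(g^{-1}y)=\varphi(g)^{-1}\varphi(y)$ for all $y$, and similarly $\varphi^r(h^{-1}z)=\varphi(h)^{-1}\varphi(z)$ for all $z$. Substituting $y=gh^{-1}z$ makes the left-hand sides coincide, so
\[
\varphi(gh^{-1}z)=\varphi(g)\varphi(h)^{-1}\varphi(z)\quad\text{for all } z.
\]
Setting $z=1$ gives $\varphi(k)=\varphi(g)\varphi(h)^{-1}$, so $\varphi(kz)=\varphi(k)\varphi(z)$ for all $z$. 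Comparing with $\varphi(kz)=\varphi(k)\varphi^{\pi(k)}(z)$ yields $\varphi^{\pi(k)}=\varphi$, so $\pi(k)\equiv 1\pmod m$, i.e. $k\in\Ker\varphi$. This converse is the step I expect to be the main obstacle, since it needs the specific substitution $y=gh^{-1}z$ rather than a direct appeal to (b).
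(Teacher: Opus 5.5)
The paper does not prove this proposition; it quotes it from Jajcay and \v{S}ir\'{a}\v{n}~\cite{JS2002}. Your argument is a correct, self-contained version of the standard derivation from that source. You prove (c) by induction and note that $\sum_{i=0}^{m-1}\pi(\varphi^i(g))\equiv 0\pmod m$, which handles arbitrary integer exponents. You obtain (b) by expanding $\varphi(ghx)$ in two ways. For the converse of (a), the substitution $y=gh^{-1}z$ correctly yields $\varphi(kz)=\varphi(k)\varphi(z)$ for all $z$, and hence $\pi(k)\equiv 1\pmod m$.

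You are also right to replace the summands in (b) by $\pi(\varphi^i(h))$, and this correction is necessary, not cosmetic. As printed, (b) is false. Taking $g=1$, where $\pi(1)\equiv 1$ by the defining identity, it would give $\pi(h)\equiv\pi(1)\equiv 1$ for every $h$. That would make every skew morphism an automorphism, contradicting the skew-type~$3$ examples in this very paper. The paper's own applications of (b) in the proofs of Theorems~\ref{Main1} and~\ref{Main2} use exactly your corrected form, namely $\pi(a^{-1}b)\equiv\sum_{i=0}^{\pi(a^{-1})-1}\pi(\varphi^i(b))$.
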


Another important arithmetic invariant of skew morphisms is the \textit{period} of the associated power 
function discovered by Bachrat\'y and Jajcay~\cite{BJ2016}. It is defined, for any skew morphism $\varphi$ 
on $G$ with power function $\pi:G\to\mathbb{Z}_{|\varphi|}$, as the smallest positive integer $\ell$ such that
\[
\pi(\varphi^\ell(g))\equiv\pi(g)\pmod{|\varphi|}\quad\text{for all }g\in G.
\]
It is known that if $\varphi$ has period $\ell$, then $\psi:=\varphi^\ell$ is a \textit{smooth} (or \textit{coset-preserving}) 
skew morphism on $G$ in the sense that its power function is constant on each orbit of $\psi$~\cite{BJ2016,WHYZ2019}. 
The power function of $\psi$ is given by $\av:G\to\mathbb{Z}_{m/\ell}$, where
\[
\av(g)=\frac{1}{\ell}\sum_{i=1}^\ell\pi(\varphi^{i-1}(g))\pmod{m/\ell},\quad g\in G.
\]
This function is also known as the \textit{average function} of $\pi$ and is always a homomorphism from $G$ 
into the multiplicative group $\mathbb{Z}^*_{m/\ell}$~\cite{HJ2026,HR2022}.

There is another arithmetic invariant of skew morphisms called the \textit{auto-index}
of a skew morphism $\varphi$ on $G$, which is defined as the 
smallest positive integer $h$ such that $\varphi^h$ is an automorphism of $G$. It is clear that the period $\ell$ 
of a skew morphism $\varphi$ divides its auto-index $h$, and $h$ divides $m:=|\varphi|$~\cite{HKZ2021}.

The following results on skew morphisms are well known.
\begin{proposition}[\cite{HKZ2021,WHYZ2019}]\label{Parameter}
Let $\varphi$ be a skew morphism of a finite group $G$ with power function $\pi:G\to\mathbb{Z}_m$, where 
$m:=|\varphi|$. Then for any integer $k$:
\begin{enumerate}[\rm(a)]
\item $\psi:=\varphi^k$ is a skew morphism on $G$ iff there exists a function 
$\pi_\psi:G\to\mathbb{Z}_{m/\gcd(k,m)}$ such that $k\pi_\psi(g)=\sum_{i=0}^{k-1}\pi(\varphi^i(g))\pmod{m}$ 
for all $g\in G$; in that case $\pi_\psi$ is the power function of $\varphi^k$.
\item $\varphi^k$ is an automorphism of $G$ iff $k$ is divisible by the auto-index of $\varphi$; 
in that case $\sum_{i=0}^{k-1}\pi(\varphi^i(g))\equiv k\pmod{|\varphi|}$ for all $g\in G$.
\item $\psi:=\varphi^k$ is a smooth skew morphism on $G$ iff $k$ is divisible by the period of $\pi$; 
in that case its power function $\pi_\psi$ is given by $\pi_\psi(g)=\frac{1}{k}\sum_{i=0}^{k-1}\pi(\varphi^i(g))$ for all $g\in G$.
\end{enumerate}
\end{proposition}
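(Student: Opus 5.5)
The plan is to reduce all three parts to the iterated identity of Proposition~\ref{Formula}(c). Write $\sigma_k(g):=\sum_{i=0}^{k-1}\pi(\varphi^i(g))$, so that
\[
\varphi^k(gh)=\varphi^k(g)\,\varphi^{\sigma_k(g)}(h)\qquad\text{for all }g,h\in G.
\]
We will use two facts repeatedly. First, $\varphi^j=\mathrm{id}_G$ iff $m\mid j$. Second, $kx\equiv 0\pmod m$ iff $x\equiv 0\pmod{m/\gcd(k,m)}$, and $m/\gcd(k,m)$ is the order of $\psi:=\varphi^k$.

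\textbf{Part (a).} Suppose a function $\pi_\psi:G\to\mathbb{Z}_{m/\gcd(k,m)}$ satisfies $k\pi_\psi(g)\equiv\sigma_k(g)\pmod m$. Note that $k\pi_\psi(g)$ is well defined modulo $m$. Then
\[
\psi(gh)=\psi(g)\,\varphi^{k\pi_\psi(g)}(h)=\psi(g)\,\psi^{\pi_\psi(g)}(h),
\]
so $\psi$ is a skew morphism with power function $\pi_\psi$. Conversely, suppose $\psi$ is a skew morphism with some power function $\pi_\psi$. Comparing its defining identity with the displayed identity gives $\varphi^{\sigma_k(g)-k\pi_\psi(g)}(h)=h$ for all $h$. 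Hence $m$ divides $\sigma_k(g)-k\pi_\psi(g)$. The same argument shows that the power function is unique modulo $|\psi|$, which justifies saying that $\pi_\psi$ \emph{is} the power function of $\psi$.

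\textbf{Part (b).} The set $\langle\varphi\rangle\cap\Aut(G)$ is a subgroup of the cyclic group $\langle\varphi\rangle$. It is therefore generated by $\varphi^{h}$, where $h$ is the least positive exponent with $\varphi^h\in\Aut(G)$, namely the auto-index. So $\varphi^k\in\Aut(G)$ iff $h\mid k$. In that case $\psi$ is a skew morphism whose power function is identically $1$, and (a) gives $\sigma_k(g)\equiv k\pmod m$.

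\textbf{Part (c).} First, the set of $k$ with $\pi(\varphi^k(g))\equiv\pi(g)$ for all $g$ is closed under addition and contains $m$. It is therefore exactly the set of multiples of the period $\ell$. Next, whenever $\varphi^k$ is a skew morphism, telescoping the sums gives
\[
k\bigl(\pi_\psi(\varphi^k g)-\pi_\psi(g)\bigr)\equiv\sigma_k(\varphi^k g)-\sigma_k(g)\equiv\sigma_k(\varphi g)-\sigma_k(g)\equiv\pi(\varphi^k g)-\pi(g)\pmod m.
\]
By the second fact above, $\pi_\psi$ is constant on $\psi$-orbits iff $\ell\mid k$. The formula for $\pi_\psi$ then follows from (a), since $\sigma_k(g)$ becomes divisible by $k$ modulo $m$ in the appropriate sense.

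The main obstacle is the remaining implication: if $\ell\mid k$, then $\varphi^k$ really is a skew morphism. By (a), this amounts to showing $\sigma_k(g)\in k\mathbb{Z}+m\mathbb{Z}$. I would first reduce to $k=\ell$, using $\sigma_{k}(g)=(k/\ell)\,\sigma_\ell(g)$, which holds because $\pi\circ\varphi^\ell=\pi$. To handle $k=\ell$, I would work in the skew-product group $A=G\langle c\rangle$, where $c^kg=\varphi^k(g)c^{\sigma_k(g)}$. The aim is to show that $G\langle c^\ell\rangle$ is closed under multiplication, using that $\varphi^\ell$ preserves the cosets of $\Ker\varphi$ and Proposition~\ref{Formula}(b). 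This yields the divisibility of $\sigma_\ell(g)$ by $\gcd(\ell,m)$, which is what (a) requires.
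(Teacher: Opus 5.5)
Parts (a) and (b) of your proposal are correct. The paper only cites this proposition, so there is no proof there to compare with.

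\textbf{The step in (c) that fails.} In your chain, the middle congruence $\sigma_k(\varphi^k g)\equiv\sigma_k(\varphi g)\pmod m$ has no justification and is false in general. The valid telescoping identity is $\sigma_k(\varphi g)-\sigma_k(g)=\pi(\varphi^k g)-\pi(g)$. Combined with (a), it gives
\[
k\bigl(\pi_\psi(\varphi g)-\pi_\psi(g)\bigr)\equiv\pi(\varphi^k g)-\pi(g)\pmod m.
\]
So, for a skew morphism $\psi=\varphi^k$, $\ell\mid k$ holds iff $\pi_\psi$ is constant on $\varphi$-orbits. Smoothness of $\psi$ only asks for constancy on the smaller $\psi$-orbits.

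\textbf{This cannot be patched: the ``only if'' half of (c) is false.} In $\mathrm{D}_{12}=\langle a,b\mid a^6=b^2=1,\ bab^{-1}=a^{-1}\rangle$, take the skew morphism of Proposition~\ref{D2n}(b.2):
\[
\varphi(a^ib^j)=a^{-i}b^j \ \text{for } i \text{ even},\qquad \varphi(a^ib^j)=a^{i-2}b^{j+1} \ \text{for } i \text{ odd}.
\]
With $K=\{1,a^3,a^2b,a^5b\}$, one checks that $\pi$ equals $1,3,5$ on $K$, $Ka$, $Kb$ respectively. Along the orbit $(a^3,ab,a^5,a^3b,a,a^5b)$ its values are $(1,3,5,5,3,1)$, so $\ell=6$.

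Now look at $\varphi^2$. It fixes $a^ib^j$ for $i$ even and sends it to $a^{i+2}b^j$ for $i$ odd. It is a skew morphism with power function $a^ib^j\mapsto(-1)^{i+j}\in\mathbb{Z}_3$, which is constant on every $\varphi^2$-orbit. Hence $\varphi^2$ is smooth although $6\nmid 2$. At $g=a^3$, $k=2$, your chain reads $0\equiv 4\pmod 6$.

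\textbf{The ``if'' direction is only sketched.} Your last paragraph is still a plan. Closure of $G\langle c^\ell\rangle$ is just a restatement of $\sigma_\ell(g)\in\ell\mathbb{Z}+m\mathbb{Z}$, and you do not say how coset preservation and Proposition~\ref{Formula}(b) would produce it. One way to finish:
\begin{enumerate}[\rm(1)]
\item By Proposition~\ref{Period}(a), the quotient $\overline{\varphi}$ on $G/\Core\varphi$ has order $\ell$, and $\bar\pi(\bar g)\equiv\pi(g)\pmod\ell$.
\item Since $\overline{\varphi}^\ell=\mathrm{id}$, part (b) applied to $\overline{\varphi}$ gives $\sigma_\ell(g)\equiv\ell\equiv0\pmod\ell$. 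As $\ell\mid m$, this is exactly the hypothesis of (a), so $\varphi^\ell$ is a skew morphism.
\item Your reduction $\sigma_k=(k/\ell)\sigma_\ell$ then covers every multiple $k$ of $\ell$.
\item Smoothness follows from $\sigma_k(\varphi^k g)-\sigma_k(g)=\sum_{i=0}^{k-1}\bigl(\pi(\varphi^{i+k}g)-\pi(\varphi^i g)\bigr)\equiv0$.
\end{enumerate}

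What you can actually prove is this: if $\ell\mid k$, then $\varphi^k$ is a smooth skew morphism; and when $\psi=\varphi^k$ is a skew morphism, $\ell\mid k$ iff $\pi_\psi\circ\varphi=\pi_\psi$.
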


An important $\varphi$-invariant normal subgroup of $G$ is the \textit{core} of $\varphi$ defined by
\[
\Core\varphi =\bigcap_{i=1}^{|\varphi|}\varphi^i(\Ker \varphi).
\]
This is the largest $\varphi$-invariant normal subgroup of $G$ contained in $\Ker \varphi$~\cite{Zhang2015a}. 
The quotient skew morphism $\overline{\varphi}$ on $\overline{G}:=G/\Core\varphi$ induced by $\Core\varphi$ is closely 
related to the period of $\varphi$.

\begin{proposition}\cite[Theorem 4.5]{WHYZ2019}\label{Period}
Let $\varphi$ be a skew morphism on $G$
of period $\ell$, and let $\overline{\varphi}$ be the quotient skew morphism on 
$\overline{G}:=G/\Core\varphi$ induced by $\Core\varphi$. Then:
\begin{enumerate}[\rm(a)]
\item $\ell$ is equal to the order of $\overline{\varphi}$.
\item The inverse image $\Smooth\varphi:=\nu^{-1}(\Fix\overline{\varphi})$ of the fixed point set 
$\Fix\overline{\varphi}$ of $\overline{\varphi}$ is the largest $\varphi$-invariant subgroup of $G$ 
with the property that $\pi$ is constant on the orbit $O_g$ for each $g\in\Smooth\varphi$.
\item $\varphi$ is smooth iff $\overline{\varphi}=\mathrm{id}_{\overline{G}}$ is the identity automorphism of $\overline{G}$.
\end{enumerate}
\end{proposition}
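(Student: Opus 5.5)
The proof proceeds part by part, with (a) as the core. Set $N:=\Core\varphi$, let $\nu:G\to\overline{G}=G/N$ be the natural epimorphism, and let $\overline{m}:=|\overline{\varphi}|$. By the discussion of quotient skew morphisms, $\overline{\varphi}^k=\mathrm{id}$ holds iff $\varphi^k(g)g^{-1}\in N$ for all $g\in G$. By Proposition~\ref{Formula}(a), $\pi(\varphi^k(g))\equiv\pi(g)$ holds iff $\varphi^k(g)g^{-1}\in\Ker\varphi$. So (a) amounts to showing that, for every $k$,
\[
\varphi^k(g)g^{-1}\in\Ker\varphi\ \ \forall g\in G \quad\Longleftrightarrow\quad \varphi^k(g)g^{-1}\in N\ \ \forall g\in G .
\]
Applying this with $k=\ell$ gives $\overline{m}\mid\ell$, and applying it with $k=\overline{m}$ gives $\ell\mid\overline{m}$, hence $\ell=\overline{m}$.

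The implication from right to left is immediate because $N\le\Ker\varphi$. For the converse, assume the left-hand condition for $k$. By the definition of $\ell$ and Proposition~\ref{Parameter}(c), $\ell\mid k$, so $\psi:=\varphi^k$ is smooth. Consider the subgroup
\[
L:=\langle \varphi^k(g)g^{-1}\mid g\in G\rangle\le\Ker\varphi .
\]
Since $N$ is the largest $\varphi$-invariant normal subgroup of $G$ contained in $\Ker\varphi$, it suffices to show that $L$ is $\varphi$-invariant and normal.

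For $\varphi$-invariance, note that the left-hand condition holds for every element, in particular for $\varphi(g)$. Using $\varphi(xy)=\varphi(x)\varphi^{\pi(x)}(y)$ with $x=g^{-1}$ and $y=\varphi^k(g)$, the image $\varphi(\varphi^k(g)g^{-1})$ can be rewritten in terms of $\varphi^{k}(\varphi(g))\varphi(g)^{-1}$ and elements of the same shape; here one uses that $\pi$ is constant on the $\varphi^k$-orbits, by smoothness.

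For normality, expand $\psi(xy)$ with the smooth power function $\pi_\psi$:
\[
\psi(xy)=\psi(x)\psi^{\pi_\psi(x)}(y).
\]
Write $\psi^{j}(y)=yu$ with $u\in L$; such a $u$ exists because $\psi^j(y)y^{-1}$ is a product of the generators $\psi(\psi^{i}(y))\psi^{i}(y)^{-1}$. This gives
\[
\psi(xy)(xy)^{-1}=\psi(x)\,yuy^{-1}\,x^{-1}.
\]
Combining this with $\psi(x)x^{-1}\in L$ yields $x(yuy^{-1})x^{-1}\in L$. Letting $x,y$ vary, this should show that $L$ is closed under conjugation.

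This normality and invariance step is the main obstacle; all the bookkeeping with power-function sums happens there. If the direct expansion gets messy, I would fall back on Proposition~\ref{Formula}(b),(c) to handle the exponents $\sum\pi(\varphi^i(g))$ modulo $m$.

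Part (c) is the special case $\ell=1$ of (a), since $\varphi$ is smooth exactly when its period is $1$.

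For (b), set $S:=\nu^{-1}(\Fix\overline{\varphi})$. It is a subgroup because the fixed-point set of a skew morphism is a subgroup, and it is $\varphi$-invariant because $\nu\varphi=\overline{\varphi}\nu$. For $g\in S$ and every $i$ we have $\varphi^i(g)\in gN\subseteq g\Ker\varphi$, so $\pi$ is constant on $O_g$. Conversely, let $T$ be a $\varphi$-invariant subgroup on whose orbits $\pi$ is constant. Then $\varphi(g)g^{-1}\in\Ker\varphi$ for all $g\in T$. Running the argument for $L$ above with $G$ replaced by $T$ shows $\varphi(g)g^{-1}\in N$, i.e.\ $\nu(T)\subseteq\Fix\overline{\varphi}$, so $T\subseteq S$. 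This proves maximality.
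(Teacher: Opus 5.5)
The paper does not prove this statement; it quotes it from the literature, so your argument has to stand on its own. Several parts are correct: the reduction of (a) to the claim ``$\varphi^k(g)g^{-1}\in\Ker\varphi$ for all $g$ implies $\varphi^k(g)g^{-1}\in\Core\varphi$ for all $g$'', part (c), and the easy half of (b).

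The gap is in how you prove that claim. You route it through the characterization of $\Core\varphi$ as the largest $\varphi$-invariant \emph{normal} subgroup in $\Ker\varphi$, so you must show that $L$ is normal, and you do not finish this. Your computation only gives $x\,(\psi^{j}(y)y^{-1})\,x^{-1}\in L$ for the single exponent $j=\pi_\psi(x)$. (Note that $\psi^j(y)y^{-1}\in L$ gives $\psi^j(y)=uy$, not $yu$; however $yuy^{-1}=\psi^j(y)y^{-1}$ in any case.) To conclude $xLx^{-1}\subseteq L$, you would still need the elements $\psi^{j}(y)y^{-1}$, $y\in G$, to generate $L$. That requires $\pi_\psi(x)$ to be a unit modulo $|\psi|$, which is true for smooth skew morphisms but is never invoked. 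Likewise, $\varphi$ sending generators of $L$ to generators does not by itself give $\varphi(L)\subseteq L$, because $\varphi$ is not a homomorphism.

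More seriously, the maximality argument in (b) breaks. With $G$ replaced by $T$, the subgroup $\langle\varphi(g)g^{-1}\mid g\in T\rangle$ is at best normalized by $T$, not by $G$. So the maximality of $\Core\varphi$ among $\varphi$-invariant normal subgroups of $G$ cannot be applied.

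Both problems disappear if you use the paper's definition $\Core\varphi=\bigcap_i\varphi^i(\Ker\varphi)$ and work with the \emph{set} $S=\{\varphi^k(g)g^{-1}\mid g\in G\}\subseteq\Ker\varphi$ rather than a subgroup. Take $x=\varphi^k(g)$ and $y=g^{-1}$; your choice $x=g^{-1}$, $y=\varphi^k(g)$ expands a different product. Using $\pi(\varphi^k(g))=\pi(g)$ and $1=\varphi(gg^{-1})=\varphi(g)\varphi^{\pi(g)}(g^{-1})$, you get
\[
\varphi\bigl(\varphi^k(g)g^{-1}\bigr)=\varphi^{k+1}(g)\,\varphi^{\pi(g)}(g^{-1})=\varphi^k(\varphi(g))\,\varphi(g)^{-1}\in S .
\]
Hence $\varphi(S)\subseteq S$, and by injectivity $\varphi(S)=S$. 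Therefore $S=\varphi^i(S)\subseteq\varphi^i(\Ker\varphi)$ for every $i$, that is, $S\subseteq\Core\varphi$. No normality or generation argument is needed.

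The same computation with $k=1$ and $g\in T$ handles (b). It uses $\varphi(T)=T$ and $\pi(\varphi(g))=\pi(g)$ for $g\in T$, and shows $\varphi(g)g^{-1}\in\Core\varphi$ for all $g\in T$. This is exactly $\nu(T)\subseteq\Fix\overline{\varphi}$.
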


\begin{proposition}\label{quotient}
Let $\varphi$ be a skew morphism on $G$, and let $\overline{\varphi}$ be the quotient 
skew morphism on $\overline{G}:=G/N$ induced by a  $\varphi$-invariant normal subgroup $N$ of $G$.
Then the skew-type, period, and auto-index of $\overline{\varphi}$ divide 
the skew-type, period, and auto-index of $\varphi$, respectively.
\end{proposition}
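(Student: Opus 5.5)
The argument splits into three parts, one for each invariant. Throughout, $\varphi$ is a skew morphism on $G$ with power function $\pi:G\to\mathbb{Z}_m$, where $m:=|\varphi|$. The quotient skew morphism $\overline{\varphi}$ on $\overline{G}=G/N$ has order $\overline{m}$ dividing $m$, and its power function satisfies $\bar\pi(gN)\equiv\pi(g)\pmod{\overline{m}}$, as recorded above.

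\textbf{Skew-type.} First we show $\Ker\varphi\le\nu^{-1}(\Ker\overline{\varphi})$, where $\nu:G\to\overline{G}$ is the natural projection. If $g\in\Ker\varphi$, then $\pi(g)\equiv1\pmod m$. Since $\overline{m}\mid m$, this gives $\bar\pi(gN)\equiv1\pmod{\overline{m}}$, so $gN\in\Ker\overline{\varphi}$. Hence $\nu(\Ker\varphi)\le\Ker\overline{\varphi}$, and therefore
\[
[\overline{G}:\Ker\overline{\varphi}]=[G:\nu^{-1}(\Ker\overline{\varphi})],
\]
which divides $[G:\Ker\varphi]$ because $\nu^{-1}(\Ker\overline{\varphi})$ is a subgroup of $G$ containing $\Ker\varphi$.

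\textbf{Period.} Let $\ell$ be the period of $\varphi$. For every $g\in G$,
\[
\bar\pi\bigl(\overline{\varphi}^{\,\ell}(gN)\bigr)=\bar\pi\bigl(\varphi^\ell(g)N\bigr)\equiv\pi(\varphi^\ell(g))\equiv\pi(g)\equiv\bar\pi(gN)\pmod{\overline{m}}.
\]
So $\ell$ is a period-type integer for $\bar\pi$. It remains to check that the set of positive integers $k$ with $\bar\pi(\overline{\varphi}^{\,k}(x))=\bar\pi(x)$ for all $x$ consists exactly of the multiples of the minimal such integer $\bar\ell$. This set is closed under addition: if $k_1,k_2$ belong to it, then $\bar\pi(\overline{\varphi}^{\,k_1+k_2}(x))=\bar\pi(\overline{\varphi}^{\,k_2}(x))=\bar\pi(x)$. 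It also contains $\overline{m}$, since $\overline{\varphi}^{\,\overline{m}}=\mathrm{id}$. So its elements reduced modulo $\overline{m}$ form a subgroup of $\mathbb{Z}_{\overline{m}}$, and the set is precisely the positive multiples of its generator $\bar\ell$, where $\bar\ell\mid\overline{m}$. Hence $\bar\ell\mid\ell$.

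\textbf{Auto-index.} Let $h$ be the auto-index of $\varphi$, so $\varphi^h$ is an automorphism of $G$. Then $\varphi^h(N)=N$, and
\[
\overline{\varphi}^{\,h}(gN)=\varphi^h(g)N
\]
is the automorphism of $\overline{G}$ induced by $\varphi^h$. By Proposition~\ref{Parameter}(b) applied to $\overline{\varphi}$, the power $\overline{\varphi}^{\,k}$ is an automorphism exactly when $k$ is divisible by the auto-index $\bar h$ of $\overline{\varphi}$. Taking $k=h$ gives $\bar h\mid h$.

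The only point requiring care is the divisibility in the period part, since the definition of period gives minimality but not divisibility. The additive-closure argument above supplies it. The skew-type and auto-index parts are direct.
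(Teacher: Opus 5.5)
Your proof is correct and follows the same route as the paper's for all three invariants: you compare $[G:\Ker\varphi]$ with $[\overline{G}:\Ker\overline{\varphi}]$ through the subgroups lying between $\Ker\varphi$ and $\nu^{-1}(\Ker\overline{\varphi})$, you reduce the period identity modulo $|\overline{\varphi}|$, and you pass the automorphism $\varphi^h$ down to $G/N$. Your additive-closure argument in the period part, and your explicit appeal to Proposition~\ref{Parameter}(b) for the auto-index, spell out the step from minimality to divisibility that the paper only asserts.
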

\begin{proof}
If $g\in\Ker\varphi$, then $\pi(g)\equiv1\pmod{|\varphi|}$. Since $|\overline{\varphi}|$ divides $|\varphi|$, 
we have $\bar\pi(g)\equiv\pi(g)\equiv1\pmod{|\overline{\varphi}|}$, 
and hence $\bar g\in\Ker\overline{\varphi}$. Therefore, $\overline{N\Ker\varphi}\leq\Ker\overline{\varphi}$. Now
\begin{align*}
[G:\Ker\varphi]&=[G:N\Ker\varphi][N\Ker\varphi:\Ker\varphi]\\
&=[\overline{G}:\overline{N\Ker\varphi}][N\Ker\varphi:\Ker\varphi]\\
&=[\overline{G}:\Ker\overline{\varphi}][\Ker\overline{\varphi}:\overline{N\Ker\varphi}][N\Ker\varphi:\Ker\varphi],
\end{align*}
so $[\overline{G}:\Ker\overline{\varphi}]$ divides $[G:\Ker\varphi]$. Moreover, let $\ell$ and $\bar\ell$ be the periods of $\varphi$ 
and $\overline{\varphi}$, then for all $g\in G$, we have $\pi(\varphi^\ell(g))\equiv\pi(g)\pmod{|\varphi|}$; since $|\overline{\varphi}|$ divides
$|\varphi|$, we also have $\pi(\varphi^\ell(g))\equiv\pi(g)\pmod{|\overline{\varphi}|}$, so the minimality of $\bar\ell$ implies that
 $\bar\ell$ divides $\ell$. 
Finally, let $h$ and $\bar h$ be the auto-indices of $\varphi$ and $\overline{\varphi}$, then $\varphi^h$ is an automorphism of $G$, so
$\overline{\varphi}^h$ is also an automorphism of $G/N$. Thus, $\bar h$ divides $h$, as required.
\end{proof}

\subsection{Regular Cayley maps}
The original motivation for introducing skew morphisms was to characterize regular Cayley maps, as
shown below:
\begin{proposition}[\cite{JS2002}]
A Cayley map $M=\CM(G,X,p)$ on a finite group $G$ is regular iff there exists a skew 
morphism $\varphi$ of $G$ with $\varphi|_X=p$.
\end{proposition}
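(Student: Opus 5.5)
This is the Jajcay--\v{S}ir\'a\v{n} characterization, and I would prove it by building the skew morphism directly from the map automorphism group in one direction and reading off the automorphisms from the skew morphism in the other.

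\emph{Necessity.} Assume $M=\CM(G,X,p)$ is regular. The left translations $L_a$ form a subgroup $G_L\le\Aut(M)$ acting regularly on the vertices. Hence $\Aut(M)=G_LC$, where $C$ is the stabilizer of the vertex $1_G$. This product is exact, and $C$ is cyclic because a vertex stabilizer of a regular orientable map is generated by the local rotation. Choose the generator $c$ acting on the neighbours of $1_G$ as the rotation, so $c(x)=p(x)$ for $x\in X$.

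Exactness gives, for each $g\in G$, a unique factorisation $c L_g = L_{g'}c^{j}$. Set $\varphi(g):=g'$ and $\pi(g):=j \bmod |C|$. Evaluating both sides at $1_G$ gives $\varphi(g)=c(g)$, so $\varphi$ is a permutation of $G$ with $\varphi(1_G)=1_G$ and $\varphi|_X=p$. For $g,h\in G$ we compute
\[
cL_{gh}=cL_gL_h=L_{\varphi(g)}c^{\pi(g)}L_h=L_{\varphi(g)}L_{\varphi^{\pi(g)}(h)}c^{(\cdot)},
\]
using the identity $c^kL_h=L_{\varphi^k(h)}c^{(\cdot)}$, which follows by iterating the definition of $\varphi$. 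Comparing the $G_L$-components gives $\varphi(gh)=\varphi(g)\varphi^{\pi(g)}(h)$. Since $\varphi$ coincides with the permutation $c$ on $G$, we have $|\varphi|=|C|$ by faithfulness, so $\pi$ takes values in $\mathbb{Z}_{|\varphi|}$ as required.

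\emph{Sufficiency.} Suppose $\varphi$ is a skew morphism with $\varphi|_X=p$.
\begin{enumerate}[\rm(1)]
\item Extend $\varphi$ to arcs by $\tilde\varphi(g,gx)=(\varphi(g),\varphi(gx))$. The defining identity gives $\varphi(gx)=\varphi(g)\varphi^{\pi(g)}(x)$, and $X$ is $\varphi$-invariant because $X$ is a single orbit of $p=\varphi|_X$. Hence $\tilde\varphi$ maps the arc $(g,gx)$ to the arc $(\varphi(g),\varphi(g)\,p^{\pi(g)}(x))$, which is again an arc of $M$.
\item The map $\tilde\varphi$ commutes with the rotation. From
\[
\rho\tilde\varphi(g,gx)=(\varphi(g),\varphi(g)p^{\pi(g)+1}(x)) \quad\text{and}\quad \tilde\varphi\rho(g,gx)=(\varphi(g),\varphi(g)\varphi^{\pi(g)}(p(x))),
\]
the two agree since $\varphi^{\pi(g)}(p(x))=p^{\pi(g)+1}(x)$.
\item $\tilde\varphi$ also respects the arc-reversal involution. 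This follows from the identity $\varphi(gx)x'=\varphi(g)$ for the appropriate $x'$, which in turn comes from applying the skew identity to the product $(gx)x^{-1}=g$.
\end{enumerate}
So $\tilde\varphi\in\Aut(M)$. It fixes $1_G$ and acts on the arcs at $1_G$ as $p$, which is transitive on them. Combined with the vertex-regular $G_L$, this makes $\Aut(M)$ transitive on arcs, so $M$ is regular.

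The main obstacle is step (3): checking that $\tilde\varphi$ preserves the edge involution, and more generally that it is well defined on darts rather than only on vertices. This needs careful use of the power function on inverses. If the direct computation becomes messy, I would instead show that the group $\langle G_L,\tilde\varphi\rangle$ equals $G_L\langle\tilde\varphi\rangle$, using the skew identity, and that it preserves adjacency and rotation. Preserving adjacency and rotation already determines the involution, because an automorphism of the underlying graph that commutes with the rotation is an orientation-preserving map automorphism.
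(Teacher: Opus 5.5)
Your argument is correct. The paper does not prove this statement itself; it quotes it from Jajcay and \v{S}ir\'a\v{n}~\cite{JS2002}. Its introduction does, however, sketch exactly your necessity direction: the vertex-regular subgroup $G_L$, the exact factorisation $\Aut(M)=G_LC$ with $C$ cyclic, and the skew morphism read off from $cg=\varphi(g)c^{\pi(g)}$. So there your route coincides with the paper's. For sufficiency you supply the standard direct verification that the paper omits, and it is sound.

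One correction to your own assessment: step (3) is not an obstacle at all. Because $\mathrm{Cay}(G,X)$ is simple, darts are just ordered pairs of adjacent vertices. So $\tilde\varphi(u,v)=(\varphi(u),\varphi(v))$ is simply the action of the vertex permutation $\varphi$ on such pairs, and it is well defined on darts as soon as step (1) shows that $\varphi$ is a graph automorphism. The relation $\tilde\varphi\lambda=\lambda\tilde\varphi$ then holds tautologically, since both sides send $(g,gx)$ to $(\varphi(gx),\varphi(g))$. No information about $\pi$ on inverses is needed. The only real content is step (1) together with the rotation check in step (2), and these use exactly $\varphi(X)=X$, $\varphi|_X=p$ and the skew identity $\varphi(gx)=\varphi(g)\varphi^{\pi(g)}(x)$. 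Your fallback plan is therefore unnecessary.

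Similarly, in the necessity part, faithfulness of $C$ on vertices is true (again by simplicity of the graph) but not needed. It suffices that $|\varphi|$ divides $|C|$, so that $\pi(g)$, defined modulo $|C|$, can be reduced modulo $|\varphi|$ without affecting $\varphi^{\pi(g)}$.
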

Since $X$ is closed under taking inverses, one may define the \textit{distribution-of-inverses} function 
$\chi: X\to\mathbb{N}$~\cite{JS2002}, where $\chi(x)$ is the smallest nonnegative integer such that
\[
p^{\chi(x)}(x)=x^{-1}\quad\text{for each }x\in X.
\]
The relationship between $\chi$ and the power function $\pi$ of $\varphi$ is given by
\begin{align}\label{Dist}
\pi(x) &\equiv \chi(\varphi(x)) - \chi(x) + 1 \pmod{|X|},\qquad x\in X.
\end{align}
If the skew morphism $\varphi$ has period $\ell$, then for all $x\in X$,
\[
\chi(\varphi^{\ell+1}(x)) - \chi(\varphi^{\ell}(x)) + 1
\equiv \pi(\varphi^\ell(x)) \equiv \pi(x)
\equiv \chi(\varphi(x)) - \chi(x) + 1 \pmod{|X|},
\]
or equivalently,
\[
\chi(\varphi^{\ell+1}(x)) -\chi(\varphi(x)) \equiv \chi(\varphi^{\ell}(x)) -\chi(x) \pmod{|X|}.
\]
Thus the difference $\chi(\varphi^{\ell}(x)) -\chi(x)$ is constant modulo $|X|$ for all $x\in X$; it will be called 
the \textit{common difference} of $\chi$. It has the following properties.

\begin{lemma}\label{CD}
Let $M=\CM(G,X,p)$ be a regular Cayley map and $\varphi$ the corresponding skew morphism with power 
function $\pi$ and distribution-of-inverses function $\chi$ on $X$. Let $\ell$ be the period of $\pi$ and $d$ 
the common difference of $\chi$. Then:
\begin{enumerate}[\rm(a)]
\item $d\equiv \ell (\av(x)-1)\pmod{|X|}$ for all $x\in X$; in particular, $\ell$ divides $d$.
\item If $\varphi^\ell$ is an automorphism of $G$, then $d=0$.
\item If there exists an involution $x\in X$ such that $\varphi^\ell(x)$ is also an involution, then $d=0$.
\end{enumerate}
\end{lemma}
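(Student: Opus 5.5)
We prove Lemma~\ref{CD} by direct computation with the relation~\eqref{Dist}, taking (a) first and then deriving (b) and (c) from it.

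\emph{Part (a).} Fix $x\in X$. Since $X$ is a $\varphi$-orbit and $\varphi|_X=p$, every $\varphi^{i}(x)$ lies in $X$, so we may apply~\eqref{Dist} to each of $x,\varphi(x),\dots,\varphi^{\ell-1}(x)$. Summing these $\ell$ congruences, the sum of $\chi(\varphi^{i+1}(x))-\chi(\varphi^i(x))$ telescopes, and we obtain
\[
\sum_{i=0}^{\ell-1}\pi(\varphi^i(x))\equiv \chi(\varphi^\ell(x))-\chi(x)+\ell\equiv d+\ell\pmod{|X|}.
\]
We need this congruence modulo $|X|$, and $|X|$ divides $|\varphi|$ because $X$ is an orbit of $\varphi$. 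Hence the left-hand sum may be reduced modulo $|X|$. By the definition of the average function, the left side equals $\ell\,\av(x)$ modulo $|\varphi|$. Here $\av$ takes values modulo $m/\ell$, so $\ell\,\av(x)$ is well defined modulo $m$, and therefore also modulo $|X|$. This gives $d\equiv \ell(\av(x)-1)\pmod{|X|}$.

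For the divisibility claim: $\ell$ divides $m$, and $|X|$ also divides $m$. The congruence $d\equiv \ell(\av(x)-1)\pmod{|X|}$ shows only that $\gcd(\ell,|X|)$ divides $d$. To get $\ell\mid d$ outright, we note that $\varphi$ acts on $X$ as the cycle $p$ of length $|X|$, so the restriction of $\varphi$ to $X$ has order $|X|$. Then the orbit of $\varphi^\ell$ through $x$ yields a cycle on $X$. I would argue that $\ell$ divides $|X|$, since $\pi$ is determined on $X$ by $\chi$ through~\eqref{Dist}, and then $\ell\mid d$ follows. This is the step I expect to need care: the interpretation of ``$\ell$ divides $d$'' should likely be read with $d$ taken modulo $|X|$ and $\ell\mid |X|$.

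\emph{Part (b).} If $\varphi^\ell$ is an automorphism, then by Proposition~\ref{Parameter}(b),
\[
\sum_{i=0}^{\ell-1}\pi(\varphi^i(g))\equiv \ell\pmod{|\varphi|}.
\]
Substituting this into the telescoped identity from (a) gives $d\equiv 0\pmod{|X|}$. Since $d$ is defined as a residue modulo $|X|$, we conclude $d=0$.

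\emph{Part (c).} Take the involution $x\in X$. Because $x^{-1}=x$, the definition of $\chi$ gives $\chi(x)=0$. By hypothesis $\varphi^\ell(x)$ is also an involution, so likewise $\chi(\varphi^\ell(x))=0$. Hence
\[
d\equiv\chi(\varphi^\ell(x))-\chi(x)=0-0=0\pmod{|X|}.
\]
Since the common difference is the same for all $x\in X$, this one choice of $x$ determines $d$, and so $d=0$.
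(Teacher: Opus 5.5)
Your telescoping computation for the main congruence in (a), and your arguments for (b) and (c), are the same as the paper's. In (b) you quote Proposition~\ref{Parameter}(b) with $k=\ell$ rather than saying $\av\equiv 1$ because $\av$ is the power function of the automorphism $\varphi^\ell$, but that is the same fact.

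The gap is in the clause ``in particular, $\ell$ divides $d$''. You correctly identify that you need $\ell\mid |X|$, but your justification does not imply it. Neither the observation that $\pi$ is determined on $X$ by $\chi$ via \eqref{Dist}, nor the remark about $\varphi^\ell$-cycles on $X$, gives this divisibility. As written you have only shown $\gcd(\ell,|X|)\mid d$. You also only use $|X|\mid|\varphi|$, which is too weak for this purpose.

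The missing ingredient is the standard fact that for a Cayley skew morphism the generating orbit has full length, i.e.\ $|\varphi|=|X|$:
\begin{enumerate}[\rm(i)]
\item The stabilizer $C=\langle c\rangle$ of $1_G$ in $\Aut(M)$ acts regularly on the $|X|$ darts at $1_G$, so $|C|=|X|$.
\item The generator $c$ acts on the vertex set $G$ as $\varphi$.
\item $\Aut(M)$ acts faithfully on vertices, because the Cayley graph is simple and so a dart is determined by its endpoints.
\end{enumerate}
Hence $|\varphi|=|c|=|X|$.

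Next, $\ell=|\overline{\varphi}|$ by Proposition~\ref{Period}(a), and the order of a quotient skew morphism divides $|\varphi|$, so $\ell\mid|X|$. Then $d\equiv \ell(\av(x)-1)\pmod{|X|}$ gives $\ell\mid d$. This is a well-defined statement about the residue $d$ precisely because $\ell\mid|X|$. The paper uses exactly this when it writes ``$\ell\mid d$ because $\ell\mid m$'' with $m=|X|$.
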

\begin{proof}
Write $m:=|X|$. For any $x\in X$,
\begin{align*}
\ell \av(x)&=\sum_{i=1}^\ell\pi(\varphi^{i-1}(x))
\equiv\sum_{i=1}^\ell\bigl(\chi(\varphi^{i}(x))-\chi(\varphi^{i-1}(x))+1\bigr)\\
&=\chi(\varphi^{\ell}(x))-\chi(x)+\ell\equiv d+\ell\pmod{m}.
\end{align*}
Hence $\ell\mid d$ because $\ell\mid m$. Now $\av:G\to\mathbb{Z}_{m/\ell}$ is the power function of 
$\varphi^\ell$, so if $\varphi^\ell$ is an automorphism then $\av(x)=1$ for all $x\in X$ and $d=0$. Finally, 
if there exists an involution $x\in X$ such that $\varphi^\ell(x)$ is also an involution, then 
$\chi(x)=0=\chi(\varphi^\ell(x))$, so $d=\chi(\varphi^\ell(x))-\chi(x)=0$.
\end{proof}

Now we turn to coverings between regular Cayley maps. The following is a generalization of Lemma 2.4 in~\cite{KKF2006}.
\begin{proposition}
Let $M=\CM(G,X,p)$ and $M'=\CM(G',X',p')$ be regular Cayley maps, and let $\varphi$ and $\varphi'$ 
be the corresponding skew morphisms on $G$ and $G'$, respectively. Then:
\begin{enumerate}[\rm(a)]
\item Every covering $\nu:G\to G'$ from $\varphi$ to $\varphi'$ with $\nu(X)=X'$ induces a regular 
covering from $M$ to $M'$. Conversely, every regular covering from $M$ to $M'$ restricts to a covering $\nu:G\to G'$ from 
$\varphi$ to $\varphi'$ with $\nu(X)=X'$.
\item Two regular Cayley maps $\CM(G,X,p)$ and $\CM(G,Y,q)$ on $G$ are isomorphic iff
there is an automorphism $\delta$ of $G$ such that $\delta(X)=Y$ and $q\delta(x)=\delta p(x)$ for all $x\in X$.
\end{enumerate}
\end{proposition}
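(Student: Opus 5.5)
The proof works through the standard correspondence between a regular Cayley map and its arc-regular automorphism group. Recall that $\Aut(M)=G_LC$, where $C=\langle c\rangle$ is the vertex stabilizer of $1_G$ and acts on the darts at $1_G$ as the cyclic rotation $p$. We also use the defining identity $cg=\varphi(g)c^{\pi(g)}$. With these in hand, part (a) and part (b) are handled separately, in both directions.

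For (a), let $\nu:G\to G'$ be a covering from $\varphi$ to $\varphi'$ with $\nu(X)=X'$. We extend $\nu$ to darts by setting $(g,gx)\mapsto(\nu(g),\nu(g)\nu(x))$.
\begin{itemize}
\item This map sends edges of $\mathrm{Cay}(G,X)$ to edges of $\mathrm{Cay}(G',X')$, since $\nu(X)=X'$.
\item It commutes with the local rotations. Indeed, $\rho'(\nu(g),\nu(g)\nu(x))=(\nu(g),\nu(g)p'(\nu(x)))$, and $p'\nu=\varphi'\nu=\nu\varphi=\nu p$ on $X$. It also commutes with the dart-reversing involution.
\item Hence it is a map homomorphism, that is, a branched covering of the surfaces.
\item It is regular: $\Ker\nu$ acts by left translation as a group of map automorphisms of $M$, and the fibres of $\nu$ on vertices and darts are exactly its orbits.
\end{itemize}

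For the converse, let $f:M\to M'$ be a regular covering. Compose $f$ with a left translation of $M'$ so that $f(1_G)=1_{G'}$. Since $f$ respects rotations and $\Aut(M')$ is regular on darts, $f$ induces a homomorphism $\Aut(M)\to\Aut(M')$. Concretely, $f$ is equivariant for the group generated by the rotation $c$ at $1_G$ and the dart-reversing involution, and it sends $c$ to $c'$. The vertex map then restricts to a map $\nu:G\to G'$ on vertex labels.

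The step I expect to need the most care is showing that $\nu$ is an epimorphism intertwining $\varphi$ and $\varphi'$.
\begin{itemize}
\item Since $f$ is a covering, $\nu(X)=X'$, and $\nu p=p'\nu$ on $X$.
\item Write $g=x_1\cdots x_r$ with $x_i\in X$, which is a walk from $1_G$. Its image under $f$ is the walk with labels $\nu(x_i)$, because $f$ preserves the rotation at every vertex and hence preserves labels. So $\nu(g)=\nu(x_1)\cdots\nu(x_r)$, and since $f$ is well defined on vertices, this gives a well-defined homomorphism.
\item $\nu$ is surjective because $X'$ generates $G'$.
\item For the intertwining, $c$ maps the walk $x_1,\dots,x_r$ to a walk from $1_G$ ending at $\varphi(g)$, and $c'$ does the same in $M'$. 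Equivariance, $fc=c'f$, then gives $\nu\varphi=\varphi'\nu$.
\end{itemize}

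For (b), an isomorphism of maps is a covering whose vertex map is bijective. Applying (a) in both directions, every isomorphism normalized to fix $1_G$ yields $\delta\in\Aut(G)$ with $\delta(X)=Y$ and $\delta\varphi=\psi\delta$. Restricting to $X$ gives $q\delta=\delta p$.

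Conversely, suppose $\delta\in\Aut(G)$ satisfies $\delta(X)=Y$ and $q\delta=\delta p$. The map $(g,gx)\mapsto(\delta(g),\delta(g)\delta(x))$ is a graph isomorphism $\mathrm{Cay}(G,X)\to\mathrm{Cay}(G,Y)$ that carries the rotation $p$ at each vertex to $q$. It therefore extends to an orientation-preserving homeomorphism of the surfaces, so the two maps are isomorphic.
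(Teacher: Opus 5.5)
\textbf{Where your proposal matches the paper.} Your forward half of (a) and the ``if'' half of (b) are essentially the paper's argument: the same dart map $(g,gx)\mapsto(\nu(g),\nu(g)\nu(x))$, checked against $\rho$ and $\lambda$.

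\textbf{The gap.} The converse breaks at ``$f$ preserves the rotation at every vertex and hence preserves labels''. Compatibility with $\rho$ only gives $f(g,gx)=\bigl(\nu(g),\nu(g)\,p'^{\,s(g)}(\nu(x))\bigr)$, where the shift $s(g)$ may depend on the vertex. Compatibility with $\lambda$ carries $s(g)=0$ across the edge to $gx$ only if $\nu(x^{-1})=\nu(x)^{-1}$, and nothing forces this. Equivalently, your induced $\theta:\Aut(M)\to\Aut(M')$ does send $c$ to $c'$, but it need not send $G_L$ into $G'_L$; without that, the vertex map is not a homomorphism. The paper's proof makes exactly the same unjustified move: it asserts that $\mu\lambda=\lambda'\mu$ yields $\nu(x^{-1})=\nu(x)^{-1}$. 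Take coverings and isomorphisms to be dart maps commuting with $\rho$ and $\lambda$, which is the notion used in the forward direction. Then the converse of (a), and the ``only if'' half of (b) that you derive from it, are false as stated.

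\textbf{Counterexample.} Let $M=\CM(S_4,X,p)$ with $X=\{(12),(23),(34),(14)\}$ and $p=\bigl((12),(23),(34),(14)\bigr)$. This is a balanced regular Cayley map, since $p$ is induced by an inner automorphism $\iota$ (conjugation by a $4$-cycle $c_0$ with $c_0^2=(13)(24)$). Define a vertex bijection by $v\mapsto v\,\bigl((13)(24)\bigr)^{\varepsilon(v)}$, where $\varepsilon(v)\in\{0,1\}$ is the parity of $v$.
\begin{enumerate}[\rm(i)]
\item It induces a dart bijection commuting with $\rho$ and $\lambda$, i.e.\ a map isomorphism onto $\CM(S_4,Y,q)$.
\item Here $Y=\{x\,(13)(24): x\in X\}$ consists of $4$-cycles, and $q=\iota|_Y$.
\item The label shift is $s(v)=2$ at odd vertices, so labels are not preserved.
\item Automorphisms of $S_4$ preserve cycle type, so there is no $\delta\in\Aut(S_4)$ with $\delta(X)=Y$. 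Hence no covering of skew morphisms as in (a), and no $\delta$ as in (b), exists.
\end{enumerate}
A smaller instance for (a): the hexagon $\CM(S_3,\{(12),(23)\},((12),(23)))$ regularly double-covers the triangle $\CM(\mathbb{Z}_3,\{1,2\},(1,2))$, being its quotient by right multiplication by $\langle(13)\rangle$, although $S_3$ has no epimorphism onto $\mathbb{Z}_3$.

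\textbf{What your argument does prove.} Your walk induction, together with $fc=c'f$, proves the converse under the extra hypothesis $\nu(x^{-1})=\nu(x)^{-1}$ for all $x\in X$. Equivalently, the hypothesis is $fL_g=L'_{\nu(g)}f$. This is the form in which (a) and (b) can safely be used.

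\textbf{A smaller point in the forward direction.} The dart fibres are not $(\Ker\nu)_L$-orbits when $|X|>|X'|$. They have size $|\Ker\nu|\,|X|/|X'|$, which is exactly the situation in the paper's applications ($|X|=4k$ over $|X'|=4$). The covering group is generated by $(\Ker\nu)_L$ and $c^{|X'|}$. Alternatively, simply note that every morphism out of a regular map is a regular covering. The paper itself asserts regularity without argument.
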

\begin{proof}
(a) Suppose that 
\[
M=\CM(G,X,p)=(\delta(M);\rho,\lambda)\quad\text{and}\quad M'=\CM(G',X',p')=(\delta(M');\rho',\lambda'),
\] 
where $\rho,\lambda,\varphi$ (resp. $\rho',\lambda',\varphi'$) are the local rotations, dart-reversing involutions, 
and skew morphisms corresponding to $M$ (resp. $M'$). If $\nu:G\to G'$ is a covering from $\varphi$ to $\varphi'$ 
with $\nu(X)=X'$, define $\mu: M\to M'$ by $\mu(g,gx)=(\nu(g),\nu(g)\nu(x))$. Then for 
any $(g,gx)\in\Omega(M)$, we have
\begin{align*}
\mu\rho(g,gx)&=\mu(g,gp(x))=(\nu(g),\nu(g)\nu p(x))=(\nu(g),\nu(g)p'\nu(x))=\rho'\mu(g,gx),\\
\mu\lambda(g,gx)&=\mu(gx,g)=(\nu(g)\nu(x),\nu(g))=\lambda'(\nu(g),\nu(g)\nu(x))=\lambda'\mu(g,gx).
\end{align*}
Thus, $\mu$ is a regular covering from $M$ to $M'$.

Conversely, suppose that $\mu: M\to M'$ is a regular covering from $M$ to $M'$. For each 
$g\in G$ and $x\in X$, there exist unique $g'\in G'$ and $x'\in X'$ such that $\mu(g,gx)=(g',g'x')$. 
By composing an automorphism of $M'$ if necessary, we may assume $\mu(1,x)=(1',x')$ 
for some $x\in X$, where $1'$ is the identity element of $G'$.
Define $\nu:G\to G'$ by $\nu(g)=g'$. From $\mu\rho=\rho'\mu$ and $\mu\lambda=\lambda'\mu$, 
we deduce that $\nu\rho^i(x)={\rho'}^i\nu(x)$ and $\nu(x^{-1})=\nu(x)^{-1}$ for all $i\in\mathbb{Z}_{|X|}$.
Now it is routine to use induction on the length of words in terms of elements of $X$ to show
 that  $\nu$ is an epimorphism with $\nu\varphi=\varphi'\nu$. Therefore, $\nu$ is a covering
 from $\varphi$ to $\varphi'$, as required.
 
 (b) This follows immediately from (a); see also~\cite[Lemma 2.4]{KKF2006}.
\end{proof}

\begin{proposition}\label{Lift}
Let $\varphi$ and $\varphi'$ be Cayley skew morphisms on $G$ and $G'$, with specified Cayley orbits $X$ and $X'$, 
respectively. Suppose $\nu:G\to G'$ is a covering map from $\varphi$ to $\varphi'$ with $\nu(X)=X'$. If $S\subseteq X'$ 
is inverse-closed, then $X_S:=X\cap \nu^{-1}(S)$ is an inverse-closed subset of $X$. In particular, if $x'\in X'$ is 
an involution, then $X_{\{x'\}}:=X\cap\nu^{-1}(x')$ is an inverse-closed subset of $X$.
\end{proposition}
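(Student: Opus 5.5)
The claim is that $X_S=X\cap\nu^{-1}(S)$ is closed under inversion. The only facts needed are that $X$ is inverse-closed, that $S$ is inverse-closed, and that the epimorphism $\nu$ commutes with taking inverses; the covering identity $\nu\varphi=\varphi'\nu$ and the hypothesis $\nu(X)=X'$ are not used. So I will simply check the definition elementwise.

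Take an arbitrary $x\in X_S$, so that $x\in X$ and $\nu(x)\in S$.

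\begin{itemize}
\item Since $X$ is the Cayley orbit of a Cayley skew morphism, it is inverse-closed, so $x^{-1}\in X$.
\item Since $\nu$ is a group homomorphism, $\nu(x^{-1})=\nu(x)^{-1}$.
\item Since $S$ is inverse-closed and $\nu(x)\in S$, we get $\nu(x)^{-1}\in S$.
\end{itemize}

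Hence $x^{-1}\in X\cap\nu^{-1}(S)=X_S$. As $x$ was arbitrary, $X_S$ is an inverse-closed subset of $X$.

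For the particular case, let $x'\in X'$ be an involution, so $x'^{-1}=x'$. Then $\{x'\}$ is an inverse-closed subset of $X'$, and applying the general statement with $S=\{x'\}$ shows that $X_{\{x'\}}=X\cap\nu^{-1}(x')$ is inverse-closed.

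There is no real obstacle here. The one point to state explicitly is that inverse-closedness of $X$ comes from the definition of a Cayley orbit (an inverse-closed generating orbit).
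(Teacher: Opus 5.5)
Your proof is correct and is essentially the paper's argument: both use only that $\nu$ is a homomorphism, so $\nu(g^{-1})=\nu(g)^{-1}$, together with the inverse-closedness of $S$ and of $X$, and the involution case follows by taking $S=\{x'\}$. The paper phrases it as ``$\nu^{-1}(S)$ is inverse-closed, hence so is its intersection with $X$,'' while you check elementwise, but the two arguments are the same.
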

\begin{proof}
Since $\nu$ is an epimorphism, $\nu^{-1}(S)=\{g\in G\mid \nu(g)\in S\}$. Because $S$ is inverse-closed, $\nu(g)\in S$ 
iff $\nu(g^{-1})=\nu(g)^{-1}\in S$, so $g\in\nu^{-1}(S)$ iff $g^{-1}\in\nu^{-1}(S)$. Since $X$ is inverse-closed, 
$X_S:=X\cap\nu^{-1}(S)$ is also inverse-closed.
\end{proof}

The following result is extracted from the classification  of regular Cayley maps 
of skew-type $1$ and $3$ on dihedral groups $\mathrm{D}_{2n}$~\cite{KMM2013, WF2005, Zhang2015a}.

\begin{proposition}[\cite{KMM2013, WF2005, Zhang2015a}]\label{D2n}
Let $M=\CM(\mathrm{D}_{2n}, X,p)$ be a regular Cayley map on the dihedral group
$\mathrm{D}_{2n}=\langle a, b\mid a^n=b^2=(ab)^2=1\rangle$.
Up to map isomorphism, we have
\begin{enumerate}[\rm(a)]
\item For $n=3$:
\begin{itemize}
\item[\rm(a.1)] if $M$ has skew-type $1$, then either $p=(b,ab)$ or $p=(b,ab,a^2b)$;
\item[\rm(a.2)] if $M$ has skew-type $3$, then $p=(a,a^{-1},ab,a^{-1}b)$.
\end{itemize}
\item For $n=6$:
\begin{itemize}
\item[\rm(b.1)] if $M$ has skew-type $1$, then either $p=(b,ab)$ or $p=(b,ab,a^2b,a^3b,a^4b,a^5b)$;
\item[\rm(b.2)] if $M$ has skew-type $3$, then   either $p=(a,a^{-1},ab,a^{-1}b)$ or $p=(a^3,ab,a^5,a^3b,a,a^5b)$.
\end{itemize}
\end{enumerate}
\end{proposition}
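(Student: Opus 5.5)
Since this proposition only collects known classifications for two small groups, I would not re-derive the general theory of~\cite{KMM2013, WF2005, Zhang2015a}. Instead I would reduce it to a finite check organised by skew-type, and then confirm the answer against the cited classifications.

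\emph{Skew-type $1$.} Here $\varphi$ is an automorphism of $\mathrm{D}_{2n}$ whose orbit $X$ generates and is inverse-closed.
\begin{enumerate}[\rm(i)]
\item For $n=3,6$ the rotation subgroup $\langle a\rangle$ is characteristic. Every automorphism has the form $a\mapsto a^{u}$, $b\mapsto a^{v}b$ with $u\in\mathbb{Z}_n^*$.
\item An orbit of such an automorphism consists either only of rotations or only of reflections. A set of rotations generates only a subgroup of $\langle a\rangle$, so $X$ consists of reflections. Each reflection is an involution, so $X$ is automatically inverse-closed.
\item For $u=1$, the automorphism $b\mapsto ab$ gives the full orbit of all $n$ reflections. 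This is $p=(b,ab,\dots,a^{n-1}b)$.
\item An automorphism swapping $b$ and $ab$ gives the orbit $(b,ab)$. For $n=6$, orbits of other sizes either fail to generate or are conjugate under $\Aut(\mathrm{D}_{2n})$ to one of these two.
\item Conjugating by $\Aut(\mathrm{D}_{2n})$ and using the isomorphism criterion of part~(b) of the map-covering proposition above yields (a.1) and (b.1).
\end{enumerate}

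\emph{Skew-type $3$.} By Proposition~\ref{Formula}(a), $\Ker\varphi$ has index $3$ and $\pi$ takes exactly three values.
\begin{enumerate}[\rm(i)]
\item For $n=3$ the subgroups of index $3$ are the three conjugates of $\langle b\rangle$. For $n=6$ they are the conjugates of the Klein group $\langle a^3,b\rangle$. Up to $\Aut(\mathrm{D}_{2n})$, I would fix $\Ker\varphi$ as $\langle b\rangle$, respectively $\langle a^3,b\rangle$.
\item Since $|X|=|\varphi|$ and $\pi$ is determined by the distribution-of-inverses function through~\eqref{Dist}, I would compute with $\chi$ on $X$.
\item Index $3$ forces $3\mid |X|$ unless the three values of $\pi$ collapse. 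A generating orbit must contain rotations, because otherwise the skew morphism is essentially an automorphism on reflections.
\item Lemma~\ref{CD} together with Proposition~\ref{Formula}(b),(c) restricts $|X|$ to $4$ or $6$. It also forces the cyclic pattern of rotations and reflections visible in the listed orbits: the pattern $(a,a^{-1},ab,a^{-1}b)$, and, for $n=6$, additionally $(a^3,ab,a^5,a^3b,a,a^5b)$.
\item For each candidate I would check that the induced permutation extends to a skew morphism. One way is to verify that the group $\langle L_G,\rho\rangle$ has order $|G||X|$. I would also confirm that the kernel has index $3$.
\end{enumerate}

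The main obstacle is the exhaustiveness of the skew-type $3$ case for $n=6$, that is, ruling out every other orbit structure. Rather than a clean theoretical argument, I would handle this by enumeration. Since $|\mathrm{D}_{12}|=12$, there are only finitely many candidate cyclic orders on inverse-closed generating sets. Each can be tested by the skew morphism identity $\varphi(gh)=\varphi(g)\varphi^{\pi(g)}(h)$, with the results cross-checked against~\cite{KMM2013, Zhang2015a}.
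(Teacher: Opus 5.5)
\textbf{Verdict: genuine gap.} The paper gives no proof of this proposition; it only extracts it from the cited classifications.

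Your skew-type $1$ half is a correct, self-contained argument. Every automorphism $a\mapsto a^u$, $b\mapsto a^vb$ preserves $\langle a\rangle$ and $\langle a\rangle b$, so a generating orbit consists of reflections. The generating orbits are the $n$-cycle of $a\mapsto a,\ b\mapsto ab$ and the $2$-cycles of the involutions $a\mapsto a^{-1},\ b\mapsto a^vb$, and the latter are all conjugate to $(b,ab)$.

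The skew-type $3$ half, however, is not a proof, and step (iii) is false. You claim that index $3$ forces $3\mid |X|$, but the maps being classified refute this. For $p=(a,a^{-1},ab,a^{-1}b)$ on $\mathrm{D}_6$ one has $|X|=4$ and $\chi=(1,3,0,0)$, and \eqref{Dist} gives $\pi=(3,2,1,2)$ on $X$: three distinct values in $\mathbb{Z}_4$. The claim also contradicts your own step (iv). If it were used to prune a search, it would discard exactly the valence-$4$ maps in (a.2) and (b.2).

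Step (iv) is where the whole classification lives, and it is only asserted. Lemma~\ref{CD} merely relates the common difference of $\chi$ to the period and to $\av$, and Proposition~\ref{Formula}(b),(c) are identities. Neither yields $|X|\in\{4,6\}$ or the rotation/reflection pattern without a real case analysis. The only a priori bound you have is $|X|=|\varphi|\le |G|-1$, i.e.\ $|X|\le 5$ for $\mathrm{D}_6$ and $|X|\le 11$ for $\mathrm{D}_{12}$. You then hand exhaustiveness to an enumeration you do not perform, ``cross-checked'' against the same references. So for skew-type $3$ your proposal reduces to the paper's citation.

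To make it a proof you must actually do the elimination. For $\mathrm{D}_6$ it is short:
\begin{enumerate}[\rm(1)]
\item $X$ must contain a non-involution, since otherwise $\chi\equiv 0$ and \eqref{Dist} gives $\pi\equiv 1$ on $X$. Hence $X=\{a,a^{-1}\}\cup R$ with $R$ a nonempty set of reflections.
\item If $|X|=3$, or if $|X|=4$ with $a^{\pm1}$ alternating with reflections, then $\pi$ is constant on $X$. Proposition~\ref{Formula}(a) then puts $a^2$ and $ar$ in $\Ker\varphi$, forcing $\Ker\varphi=\mathrm{D}_6$, i.e.\ skew-type $1$.
\item $|X|=5$ is impossible, because a subgroup $\mathrm{C}_5$ of a group of order $30$ is normal, hence not core-free.
\item Up to automorphisms the remaining orders are $(a,a^{-1},b,a^2b)$ and $(a,a^{-1},b,ab)$. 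The first puts both $b$ and $ab$ in $\Ker\varphi$, a contradiction. The second is (a.2).
\end{enumerate}
For $\mathrm{D}_{12}$ the search is genuinely larger and must be carried out. One way is to split on $\Core\varphi\in\{1,\langle a^3\rangle\}$; these are the only normal subgroups of $\mathrm{D}_{12}$ inside the Klein four-group $\Ker\varphi$. In the second case you can use the covering reduction to $\mathrm{D}_6$ from Section~2.
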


%%%%%%%%%%%%%%%%%%%
\section{Characterization lemmas}
From now on, we concentrate on regular Cayley maps of skew-type $3$ on the semidihedral group $\SD_{8n}$ . 
In this section, we first analyze the subgroup structure and automorphisms of $\SD_{8n}$, and then give a 
characterization of the core of skew morphisms corresponding to regular Cayley maps  of skew-type $3$ on $\SD_{8n}$.

Recall that $\SD_{8n}$ is defined by the presentation
\[
\SD_{8n}=\langle a,b\mid a^{4n}=b^2=1,\; bab^{-1}=a^{2n-1}\rangle,\quad n\geq 2.
\]
Since $[\SD_{8n}:\langle a\rangle]=2$, the elements of $\SD_{8n}$ are partitioned into
 elements of $a$-type and $b$-type, which belong to the cosets 
$\langle a\rangle$ and $\langle a\rangle b$, respectively. 
The order of a $b$-type element $a^ib$ is either $2$ or $4$; more precisely, 
\begin{equation}\label{Order}
|g|=2\ \text{if $g\in \langle a^2\rangle b$},\quad\text{and}\quad \qquad |g|=4\ \text{if $g\in \langle a^2\rangle ab$}.
\end{equation}
For all $n\geq 2$, it is well known that the automorphism group $\Aut(\SD_{8n})$ consists of mappings  
$\delta_{u,v}$ with $\delta_{u,v}(a)= a^u$ and $\delta_{u,v}( b)=a^v b$, where  $\gcd(u,4n)=1$ and  $v$ is even.
It is clear that the  cyclic subgroup $\langle \delta_{1,2}\rangle$ of $\Aut(\SD_{8n})$ is transitive 
on both $\langle a^2\rangle b$ and  $\langle a^2\rangle ab$. 

Moreover, if $m$ is a positive divisor of $4n$, then $N:=\langle a^m\rangle\ch\langle a\rangle\ch \SD_{8n}$, 
so $N\ch \SD_{8n}$. Thus,  for each $\delta\in\Aut(\SD_{8n})$, 
we may define an automorphism $\delta^*$ of $\SD_{8n}/N$ by $\delta^*(gN)=\delta(g)N$ $(g\in \SD_{8n})$. 
It follows that the mapping  $\Phi:\delta\mapsto\delta^*$
is a well defined homomorphism from $\Aut(\SD_{8n})$ into $\Aut(\SD_{8n}/N)$,
which we describe in greater detail as follows.
\begin{lemma}\label{auto}
Let $G:=\SD_{8n}$ be the semidihedral group defined by \eqref{Pre},  and $N=\langle a^m\rangle$.
\begin{enumerate}[\rm(a)]
 \item If $ m\geq 3$ is an odd divisor of $2n$, then $[\Aut(G/N):\Phi(\Aut(G))]=1$. 
 \item If $ m\geq 3$ is an even divisor of $2n$, then $[\Aut(G/N):\Phi(\Aut(G))]=2$.
 \end{enumerate}
\end{lemma}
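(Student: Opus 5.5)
The plan is to identify the quotient $G/N$ explicitly, write down $\Aut(G/N)$, and then compute the image of $\Phi$ directly on the generators $\delta_{u,v}$.

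First, since $m$ divides $2n$, we have $2n-1\equiv -1\pmod m$. Adding the relation $a^m=1$ to \eqref{Pre} therefore gives
\[
G/N=\langle \bar a,\bar b\mid \bar a^m=\bar b^2=1,\ \bar b\bar a\bar b^{-1}=\bar a^{-1}\rangle .
\]
The relation $\bar a^{4n}=1$ is redundant because $m\mid 4n$. Moreover $|G/N|=8n/(4n/m)=2m$. So $G/N\cong \mathrm{D}_{2m}$, with $\bar a$ of order exactly $m$.

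Second, because $m\geq 3$, the cyclic subgroup $\langle\bar a\rangle$ is characteristic in $\mathrm{D}_{2m}$: it is the unique cyclic subgroup of index $2$ when $m\ge 3$. All elements outside $\langle\bar a\rangle$ are involutions $\bar a^v\bar b$. Hence
\[
\Aut(G/N)=\{\epsilon_{u,v}:\bar a\mapsto\bar a^u,\ \bar b\mapsto \bar a^v\bar b\mid u\in\mathbb{Z}_m^*,\ v\in\mathbb{Z}_m\},
\]
and each such assignment preserves the defining relations. In particular $|\Aut(G/N)|=m\,\phi(m)$.

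Third, from the definition of $\delta^*$ we get $\Phi(\delta_{u,v})=\epsilon_{\bar u,\bar v}$, where bars denote reduction modulo $m$. The image of $\Phi$ is therefore
\[
\{\epsilon_{\bar u,\bar v}: \gcd(u,4n)=1,\ v\ \text{even}\}.
\]
I treat the two coordinates separately.

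For the first coordinate, I use the standard fact that reduction $\mathbb{Z}_{4n}^*\to\mathbb{Z}_m^*$ is surjective for $m\mid 4n$. Given $u_0$ coprime to $m$, one chooses $u\equiv u_0\pmod m$ and adjusts $u$ by a multiple of $m$ so that it avoids the primes dividing $4n$ but not $m$; this is a routine CRT/Dirichlet-free argument. Every $\bar u\in\mathbb{Z}_m^*$ therefore occurs. The two coordinates are independent, since $u$ and $v$ are chosen independently in $\delta_{u,v}$. Hence the image is a product set, and it remains only to determine which residues $\bar v$ occur.

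For the second coordinate, split by the parity of $m$:
\begin{itemize}
\item If $m$ is odd, then $2$ is invertible modulo $m$, so the even integers reduce onto all of $\mathbb{Z}_m$. The image is then all of $\Aut(G/N)$, giving index $1$ and proving (a).
\item If $m$ is even, then even integers reduce exactly onto the $m/2$ even residues modulo $m$. The image then has order $\phi(m)\cdot m/2$, and the index is $2$, proving (b).
\end{itemize}

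The only step needing care is the description of $\Aut(\mathrm{D}_{2m})$ (this is where $m\ge3$ is used) together with the surjectivity of unit groups under reduction. Neither is a real obstacle.
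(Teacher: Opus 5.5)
Your proposal is correct and follows essentially the same route as the paper. Both arguments identify $G/N\cong\mathrm{D}_{2m}$ and write its automorphisms as $\bar a\mapsto\bar a^{u'}$, $\bar b\mapsto\bar a^{v'}\bar b$. Both lift $u'$ by surjectivity of $\mathbb{Z}_{4n}^*\to\mathbb{Z}_m^*$, and both note that an even lift of $v'$ exists for every $v'$ when $m$ is odd, but only for even $v'$ when $m$ is even.

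The one small difference is in (b). You get the index by counting the image, $\phi(m)\cdot m/2$ out of $m\,\phi(m)$. The paper instead shows that the product of two non-liftable automorphisms is liftable.
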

\begin{proof}
Since $N=\langle a^m\rangle\ch \langle a\rangle\ch G$, we have $N\ch G$, so 
the above mapping $\Phi:\delta\mapsto\delta^*$ is
a homomorphism from $\Aut(G)$ into $\Aut(G/N)$. 
Note that the hypothesis implies that  $G/N$ is a dihedral group  defined by the presentation
\[
G/N=\langle \bar a,\bar b\mid \bar a^m=\bar b^2=1, \bar b\bar a\bar b^{-1}=\bar a^{-1}\rangle\cong\mathrm{D}_{2m}.
\]
Since $m\geq 3$,  every $\tau\in\Aut(G/N)$ has the form $\tau=\tau_{u',v'}:\bar a\mapsto \bar a^{u'}, \bar b\mapsto \bar a^{v'}\bar b$
 for some $u'\in\mathbb{Z}_m^*$ and $v'\in\mathbb{Z}_m$.

The number $u'\in\mathbb{Z}_m^*$ can be lifted to $u\in\mathbb{Z}_{4n}^*$ with $u\equiv u'\pmod{m}$~\cite[Lemma 1]{HNW2015}. 
If $v'$ is even, then we may take $v=v'$.  If  $m$ is odd and $v'$ is odd, then we can 
take $v=v'+m$, so that $\Phi(\delta_{u,v})=\delta_{u,v}^*=\tau_{u',v'}$, and hence $[\Aut(G/N):\Phi(\Aut(G))]=1$.
On the other hand, if $m$ is even and $v'$ is odd, then there exists no even number $v\in\mathbb{Z}_{4n}$ with
 $v\equiv v'\pmod{m}$. However, if $u',u''\in\mathbb{Z}_{m}^*$, and $v',v''\in\mathbb{Z}_m$ are odd,
then $u'u''\in\mathbb{Z}_m^*$ and $v'+v''\in\mathbb{Z}_m$ is even, so we may choose 
$u\in\mathbb{Z}_{4n}^*$ with $u\equiv u'u''\pmod{m}$ and $v:=v'+v''\in\mathbb{Z}_{4n}$ so that
$\delta_{u,v}\in\Aut(G)$ and  $\tau_{u',v'}\tau_{u'',v''}=\tau_{u'u'', v'+v''}=\delta_{u,v}^*$. 
Therefore, $[\Aut(G/N):\Phi(\Aut(G))]=2$, as required.
\end{proof}

Now we turn to subgroups of index $3$ in $\SD_{8n}$. It is evident that $\SD_{8n}$ has a subgroup of 
index $3$ only if $n$ is divisible by $3$.

\begin{lemma}\label{Index3}
For any positive integer $n$ divisible by $3$, the semidihedral group $\SD_{8n}$ contains exactly three 
distinct subgroups of index $3$, namely,
\[
H_1=\langle a^3, b \rangle,\quad H_2=\langle a^3, a^{-2}b \rangle,\quad H_3=\langle a^3, a^2b \rangle.
\]
Furthermore, $\delta_{1,2}(H_1)=H_3$ and $\delta_{1,-2}(H_1)=H_2$.
\end{lemma}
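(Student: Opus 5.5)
Let $G:=\SD_{8n}$ with $3\mid n$. Every subgroup $H$ of index $3$ has order $8n/3$. I would first locate $H\cap\langle a\rangle$. Since $|\langle a\rangle|=4n$ and $[G:\langle a\rangle]=2$, the product formula gives $[H:H\cap\langle a\rangle]\le 2$. Hence $|H\cap\langle a\rangle|\ge 4n/3$.

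On the other hand, $H\cap\langle a\rangle$ is a subgroup of $\langle a\rangle$ of index at least $[G:H]\cdot[\langle a\rangle:\ \cdot\ ]$-compatible. More concretely, $[\langle a\rangle:H\cap\langle a\rangle]\ge [H\langle a\rangle:H]$, and this index divides $12n/|H|\cdot$. It is cleaner to argue as follows. If $H\le\langle a\rangle$, then $|H|=8n/3$ would have to divide $4n$, forcing $8n/3\mid 4n$, i.e.\ $2\mid 3$, which is a contradiction. So $H$ contains some $b$-type element, and $H\cap\langle a\rangle$ has index $2$ in $H$, hence order $4n/3$. Since $\langle a\rangle$ is cyclic, the unique subgroup of that order gives $H\cap\langle a\rangle=\langle a^3\rangle$.

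Therefore $H=\langle a^3, a^ib\rangle$ for some $b$-type element $a^ib\in H$. The coset $\langle a^3\rangle a^ib$ consists of all $a^jb$ with $j\equiv i\pmod 3$, so $H$ is determined by $i \bmod 3$. This yields at most three subgroups, corresponding to $i\equiv 0,-2,2 \pmod 3$, that is, $H_1$, $H_2$, $H_3$.

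Conversely, I would check that each $H_i$ is really a subgroup of order $8n/3$. It suffices to show that $\langle a^3\rangle\cup\langle a^3\rangle a^ib$ is closed under multiplication. Because $\langle a^3\rangle\ch\langle a\rangle\lhd G$, the subgroup $\langle a^3\rangle$ is normal. Moreover $(a^ib)^2=a^{i}a^{i(2n-1)}=a^{2ni}\in\langle a^{3}\rangle$, since $3\mid n$. Hence $\langle a^3\rangle\langle a^ib\rangle$ has order $2\cdot 4n/3$. The three subgroups are distinct because the residues $0,-2,2$ are distinct mod $3$.

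Finally, $\delta_{1,2}$ fixes $a^3$ and sends $b\mapsto a^2b$, so $\delta_{1,2}(H_1)=H_3$. Likewise $\delta_{1,-2}$ sends $b\mapsto a^{-2}b$, giving $\delta_{1,-2}(H_1)=H_2$. Both maps are automorphisms, since $u=1$ and $v=\pm2$ is even.

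The only point needing care is excluding $H\le\langle a\rangle$ and pinning down $H\cap\langle a\rangle$. The order argument above handles both.
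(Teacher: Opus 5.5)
Your proof is correct and follows essentially the same route as the paper. You bound $[H:H\cap\langle a\rangle]\le 2$, rule out $H\le\langle a\rangle$ by orders so that $H\cap\langle a\rangle=\langle a^3\rangle$, and then classify $H=\langle a^3,a^ib\rangle$ by $i \bmod 3$; you also add two checks the paper leaves implicit, namely that each $\langle a^3,a^ib\rangle$ has order $8n/3$ (via $(a^ib)^2=a^{2ni}\in\langle a^3\rangle$) and that $\delta_{1,\pm 2}$ send $H_1$ where claimed. The abandoned half-sentence in your second paragraph (the ``index compatibility'' remark) is garbled and should be deleted, since the argument after it does all the work.
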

\begin{proof}
Write $n=3m$. Suppose $H$ is a subgroup of $\SD_{8n}$ with $[\SD_{8n}:H]=3$. Then $|\SD_{8n}|=24m$, 
$|\langle a\rangle|=12m$, and $|H|=8m$. Clearly $H\not\subseteq \langle a\rangle \lhd \SD_{8n}$ and 
$\SD_{8n}=H\langle a\rangle$. Now
\[
24m = |\SD_{8n}| = \frac{|H|\,|\langle a\rangle|}{|H\cap\langle a\rangle|} = \frac{8m\cdot12m}{|H\cap\langle a\rangle|},
\]
so $|H\cap\langle a\rangle|=4m$, and hence $H\cap\langle a\rangle=\langle a^3\rangle$. Since $[\SD_{8n}:H]=3$, 
$H$ must contain a $b$-type element $a^sb$. Because $[\SD_{8n}:\langle a^3,a^sb\rangle]=3$, we have 
\[
H=\langle a^3,a^sb\rangle=
\begin{cases}
\langle a^3,b\rangle, & \text{if } s\equiv0\pmod{3},\\
\langle a^3,a^{-2}b\rangle, & \text{if } s\equiv1\pmod{3},\\
\langle a^3,a^2b\rangle, & \text{if } s\equiv-1\pmod{3}.
\end{cases}
\]
These three subgroups are clearly distinct, as required.
\end{proof}

\begin{lemma}\label{normalsub}
Suppose $N$ is a normal subgroup of $\SD_{8n}$ of index at least $3$.
\begin{enumerate}[\rm(a)]
\item If $n$ is even, then $N< \langle a\rangle$.
\item If $n$ is odd, then either $N< \langle a\rangle$, or $N=\langle a^4,b\rangle$, or $N=\langle a^{4}, a^2b\rangle$.
\end{enumerate}
\end{lemma}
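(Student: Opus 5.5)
We split according to whether $N$ meets the coset $\langle a\rangle b$. If $N\le\langle a\rangle$, then $N$ is properly contained in $\langle a\rangle$ by the index hypothesis (since $[\SD_{8n}:\langle a\rangle]=2<3$), and we are done. So assume $N$ contains some $b$-type element $a^sb$; we derive strong restrictions from normality.

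\textbf{Step 1: conjugates and commutators.} Conjugating $a^sb$ by $a$ gives $a\cdot a^sb\cdot a^{-1}=a^{s+1}\,b a^{-1}b^{-1}\,b=a^{s+1}a^{-(2n-1)}b=a^{s+2-2n}b$. Hence $N$ contains
\[
(a^{s+2-2n}b)(a^sb)^{-1}=a^{2-2n}.
\]
Since $\gcd(2-2n,4n)=2\gcd(n-1,2n)=2\gcd(n-1,2)$, we get:
\begin{itemize}
\item if $n$ is even, $\langle a^{2-2n}\rangle=\langle a^2\rangle\le N$;
\item if $n$ is odd, $\langle a^4\rangle\le N$.
\end{itemize}

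\textbf{Step 2: $n$ even.} Here $\langle a^2, a^sb\rangle\le N$, and this subgroup has order $2\cdot 2n=4n$, i.e. index $2$ in $\SD_{8n}$. Then $[\SD_{8n}:N]\le 2$, contradicting the hypothesis. Hence $N\le\langle a\rangle$, and by index $N<\langle a\rangle$, proving (a).

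\textbf{Step 3: $n$ odd.} Now $\langle a^4,a^sb\rangle\le N$. Since $4\mid 4n$, $\langle a^4\rangle$ has order $n$, so $\langle a^4,a^sb\rangle$ has order $2n$ and index $4$. The index of $N$ is therefore $3$ or $4$; but $\langle a^4,a^sb\rangle$ has index $4$ and $N$ contains it, so the index divides $4$ and is at least $3$, forcing $N=\langle a^4,a^sb\rangle$. It remains to identify this subgroup as one of the two listed:
\begin{itemize}
\item if $s$ is even, then $s\equiv 0$ or $2\pmod 4$, so $N=\langle a^4,b\rangle$ or $\langle a^4,a^2b\rangle$;
\item if $s$ is odd, then $(a^sb)^2=a^sa^{s(2n-1)}=a^{2ns}=a^{2n}$, because $s$ odd gives $2ns\equiv 2n\pmod{4n}$. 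But $a^{2n}\in N\cap\langle a\rangle=\langle a^4\rangle$ would require $4\mid 2n$, which is false since $n$ is odd. This contradiction excludes odd $s$.
\end{itemize}
Normality of the two listed groups need not be checked, since the statement only asserts that $N$ is one of them. The main (mild) obstacle is getting the conjugation formula and the gcd computation right; the rest is order counting.
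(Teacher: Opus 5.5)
Your commutator computation, the gcd step and the even case are correct and coincide with the paper's argument. For odd $n$ the paper finishes by passing to $\SD_{8n}/\langle a^4\rangle\cong\mathbb{Z}_2\times\mathbb{Z}_4$ and listing the subgroups of order at most $2$ there. You count orders directly instead, which is a fine alternative. However, Step 3 as written contains a false claim.

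You assert, with no condition on $s$, that $\langle a^4,a^sb\rangle$ has order $2n$ and index $4$. This holds only for even $s$. For odd $s$ you compute yourself that $(a^sb)^2=a^{2n}$, and since $n$ is odd, $a^{2n}\notin\langle a^4\rangle$. Hence $\langle a^4,a^sb\rangle\ge\langle a^4,a^{2n}\rangle=\langle a^2\rangle$, and the subgroup has order $4n$ and index $2$. Your exclusion of odd $s$ then invokes $N\cap\langle a\rangle=\langle a^4\rangle$, which you obtained from exactly this order count. So in the very case it is meant to rule out, the argument is circular.

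The repair is one line: split on the parity of $s$ \emph{before} counting.
\begin{enumerate}[\rm(1)]
\item If $s$ is odd, then $a^{2n},a^4\in N$ give $a^2\in N$ (as $\gcd(4,2n)=2$). Hence $N\ge\langle a^2,a^sb\rangle$ has index at most $2$, a contradiction exactly as in Step 2.
\item If $s$ is even, $a^sb$ is an involution normalizing $\langle a^4\rangle$. Your count $|\langle a^4,a^sb\rangle|=2n$ is then valid, and the rest of Step 3 goes through.
\end{enumerate}

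Alternatively, you can justify $N\cap\langle a\rangle=\langle a^4\rangle$ independently of $s$. Since $N\langle a\rangle=\SD_{8n}$, one has $|N\cap\langle a\rangle|=|N|/2\le 4n/3$. The only subgroup of $\langle a\rangle$ that contains $\langle a^4\rangle$ and has order at most $4n/3$ is $\langle a^4\rangle$ itself. With that in hand, your parity argument for odd $s$ is valid as stated.

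In the paper's quotient approach this case disappears automatically: the image of $a^sb$ with $s$ odd has order $4$ in $\mathbb{Z}_2\times\mathbb{Z}_4$, so it cannot lie in a subgroup of order at most $2$.
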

\begin{proof}
By hypothesis, $N\lhd G:=\SD_{8n}$ and $|G:N|\geq3$. Every proper subgroup of $\langle a\rangle$ is a 
normal subgroup of $G$ of index at least $3$. Now assume $N\not\leq\langle a\rangle$, so $N$ contains a 
$b$-type element $a^s b$. Then 
\[
a^{2-2n}=a(a^sb)a^{-1}(a^sb)^{-1}\in N\quad\text{and}\quad a^4=(a^{s+4}b)(a^{s}b)^{-1}=a^2(a^sb)a^{-2}(a^{s}b)^{-1}\in N.
\]

If $n$ is even, then $|a^{2-2n}|=2n$ and $N\geq\langle a^{2-2n},a^sb\rangle$, so $[G:N]\leq[G:\langle a^{2-2n},a^sb\rangle]=2$, a contradiction.

If $n=2n_1+1$ is odd, then $|a|=4(2n_1+1)$ and $a^{2n-2}=a^{4n_1}$. Since $\gcd(n_1,2n_1+1)=1$, 
we have $G'=\langle a^4\rangle\leq N$. Then $N/G'\leq G/G'$ and $[G/G':N/G']\geq3$. Since 
$G/G'\cong\mathbb{Z}_2\times\mathbb{Z}_4$, we have $N/G'=1$, $\langle\bar a^2\rangle$, $\langle\bar b\rangle$, 
or $\langle\bar a^2\bar b\rangle$, so $N=\langle a^4\rangle$, $\langle a^2\rangle$, $\langle a^4,b\rangle$, 
or $\langle a^4,a^2b\rangle$. Since $N\not\leq\langle a\rangle$, we get $N=\langle a^4,b\rangle$ or $\langle a^4,a^2b\rangle$,
as required.
\end{proof}

The following lemma describes the core of skew morphisms  of skew-type $3$ on $\SD_{8n}$.
\begin{lemma}\label{lemma Core}
Let $\varphi$ be a skew morphism on $\SD_{8n}$.
If $\varphi$ has skew-type $3$, then $n=3m$ is a multiple of $3$, and $\Core\varphi = \langle a^3 \rangle$ 
or $\langle a^6 \rangle$ (the latter only when $m=1$). More precisely, $\Core\varphi = \langle a^3 \rangle$ 
if $m\geq2$, and $\Core\varphi = \langle a^3 \rangle$ or $\langle a^6 \rangle$ if $m=1$.
\end{lemma}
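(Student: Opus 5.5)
Since $\varphi$ has skew-type $3$, $\Ker\varphi$ is a subgroup of index $3$ in $\SD_{8n}$. By the remark preceding Lemma~\ref{Index3}, this forces $3\mid n$; write $n=3m$. Lemma~\ref{Index3} then gives $\Ker\varphi\in\{H_1,H_2,H_3\}$, and each $H_i$ has the form $\langle a^3,a^sb\rangle$. Hence $\Core\varphi\le\Ker\varphi\le H_i$. The plan is to first pin down the possible normal subgroups of $G=\SD_{8n}$ contained in some $H_i$, and then rule out all but $\langle a^3\rangle$ and $\langle a^6\rangle$ using the dihedral classification in Proposition~\ref{D2n}.

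\textbf{Step 1 (shape of the core).} $\Core\varphi$ is normal of index at least $3$, so Lemma~\ref{normalsub} applies. If $\Core\varphi\not\le\langle a\rangle$, then $n$ is odd and $\Core\varphi$ contains $a^4$. But $H_i\cap\langle a\rangle=\langle a^3\rangle$ does not contain $a^4$ unless $3\mid 4$, which is absurd. Therefore $\Core\varphi\le\langle a\rangle\cap H_i=\langle a^3\rangle$, so $\Core\varphi=\langle a^{3k}\rangle$ for some divisor $k$ of $4m$.

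\textbf{Step 2 (reduction to a dihedral quotient).} Set $N=\Core\varphi$ and let $\overline\varphi$ be the quotient skew morphism on $G/N$. The quotient group $G/\langle a^{3k}\rangle$ is dihedral only when $3k$ divides $2n$. So first I would show that $\langle a^{2n}\rangle\le N$, equivalently that $k$ divides $2m$. The natural argument: $\langle a^{2n}\rangle=Z(G)$ is characteristic, and $a^{2n}\in\langle a^3\rangle\le H_i$. I expect to show $Z(G)$ is $\varphi$-invariant, since $\varphi$ preserves the orbit structure and the unique central involution is fixed by every skew morphism with index-$3$ kernel; this can be argued via Proposition~\ref{Formula} applied to products with a central element.

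Granting this, $\overline G$ is $\mathrm D_{2\cdot 3k}$. Since $\overline\varphi$ projects from $\varphi$, its $\overline X$ is a Cayley orbit. By Proposition~\ref{quotient}, $\overline\varphi$ has skew-type dividing $3$. Moreover, since $N$ is the core, $\overline\varphi$ has trivial core in $\overline G$, and its kernel is $\overline{\Ker\varphi}$ of index $3$. So $\overline\varphi$ is a core-free skew morphism of skew-type $3$ on a dihedral group.

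\textbf{Step 3 (the main obstacle).} The hard part is bounding $k$. I would invoke the known dihedral results underlying Proposition~\ref{D2n}, namely that core-free Cayley skew morphisms of skew-type $3$ on $\mathrm D_{2r}$ occur only for $r\in\{3,6\}$. This gives $3k\in\{3,6\}$, so $N=\langle a^3\rangle$ or $\langle a^6\rangle$.

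It remains to exclude $\langle a^6\rangle$ when $m\ge2$. In that case $\overline\varphi$ lives on $\mathrm D_{12}$ with $p=(a^3,ab,a^5,a^3b,a,a^5b)$ from (b.2). I would lift this using Proposition~\ref{Lift}. Take the involution $\bar a^3$, whose fibre $X\cap\nu^{-1}(\bar a^3)$ is inverse-closed, and track the period and the common difference via Lemma~\ref{CD}(c). The aim is to show that the involutions and elements of order $4$ in $\langle a^2\rangle b$ and $\langle a^2\rangle ab$ (see \eqref{Order}) are incompatible with this rotation unless $|a^6|=2$, that is, unless $m=1$.
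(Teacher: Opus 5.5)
Your Step 1 is correct and matches the paper: Lemma~\ref{normalsub}, together with $a^4\notin\langle a^3\rangle$, gives $\Core\varphi\le\langle a^3\rangle$. From there on the argument has genuine gaps.

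The lemma concerns an arbitrary skew morphism of skew-type $3$. Steps 2--3, however, presuppose a generating Cayley orbit $X$ and rely on a classification of regular Cayley maps on dihedral groups. Proposition~\ref{D2n} only lists the maps for $n=3,6$. It does not assert that core-free skew-type-$3$ skew morphisms on $\mathrm{D}_{2r}$ exist only for $r\in\{3,6\}$, so that claim is an unproved external input.

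Several further steps are asserted rather than proved:
\begin{enumerate}
\item That $\overline\varphi$ has skew-type exactly $3$. Proposition~\ref{quotient} only gives that it divides $3$, and the paper's Lemma~\ref{prop:quotient-skewtype}, which proves this in the Cayley case, itself uses the present lemma.
\item That $\langle a^{2n}\rangle$ is $\varphi$-invariant. Being characteristic or central does not by itself give invariance under a skew morphism. Note also that $\langle a^{2n}\rangle\neq Z(G)$ when $n$ is odd, e.g.\ $Z(\SD_{24})=\langle a^3\rangle$.
\item The exclusion of $\langle a^6\rangle$ for $m\ge 2$, which you only state as an aim.
\end{enumerate}

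The missing idea, which makes all of this unnecessary, is that $\varphi$ restricted to $\Ker\varphi$ is an injective homomorphism. Indeed, for $g,h\in\Ker\varphi$ we have $\varphi(gh)=\varphi(g)\varphi^{\pi(g)}(h)=\varphi(g)\varphi(h)$. Hence $\varphi(\langle a^3\rangle)$ is a cyclic subgroup of order $4m$.

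If $m\ge 2$, this order is at least $8$, while $b$-type elements have order at most $4$ by \eqref{Order}. So $\varphi(\langle a^3\rangle)=\langle a^3\rangle$. This subgroup is normal, $\varphi$-invariant and contained in $\Ker\varphi$, so it lies in $\Core\varphi$, and Step 1 gives equality.

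If $m=1$, the image is $\langle a^3\rangle$ or $\langle a^ib\rangle$ with $i$ odd. In either case its unique involution is $a^6$, so $\varphi(a^6)=a^6$. Therefore $\langle a^6\rangle\le\Core\varphi\le\langle a^3\rangle$, which leaves only the two stated options.

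This is exactly the paper's proof. It also yields directly the invariance of $\langle a^{2n}\rangle$ that your Step 2 needed.
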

\begin{proof}
Since $\varphi$ has skew-type $3$, $[\SD_{8n}:\Ker \varphi]=3$, so $n$ is a multiple of $3$. Write $n=3m$. 
By Lemma~\ref{Index3}, $\Ker \varphi\cap\langle a\rangle = \langle a^3 \rangle$. Since $\Core\varphi \leq \Ker \varphi$
 is a normal subgroup of $\SD_{8n}$, by Lemmas~\ref{Index3} and~\ref{normalsub} we have $\Core\varphi \leq \langle a^3 \rangle$.

On the other hand, $\varphi|_{\Ker\varphi}$ is an isomorphism from $\Ker\varphi$ to $\varphi(\Ker\varphi)$, 
so 
\[
\varphi(\Ker\varphi\cap\langle a\rangle) \cong \Ker\varphi\cap\langle a\rangle = \langle a^3\rangle.
\]

If $m\geq2$, then $\SD_{8n}$ has a unique cyclic group of order $4m$, namely $\langle a^3\rangle$. 
Hence $\varphi(\Ker\varphi\cap\langle a\rangle)=\Ker\varphi\cap\langle a\rangle=\langle a^3\rangle$, 
so $\langle a^3\rangle \leq \Core\varphi$. Thus $\Core\varphi = \langle a^3\rangle$ when $m\geq2$.

If $m=1$, then $|a^3|=4$. By \eqref{Order}, a $b$-type element $a^ib\in\SD_{24}$ has order $4$ iff $i$ is odd. 
Either $\varphi(\Ker\varphi\cap\langle a\rangle)=\Ker\varphi\cap\langle a\rangle=\langle a^3\rangle$,
 or $\varphi(\Ker\varphi\cap\langle a\rangle)=\langle a^i b\rangle$ for some odd $i$. In the first case 
 $\Core\varphi=\langle a^3\rangle$; in the second, since $\langle a^i b\rangle=\{1,a^6,a^ib,a^{6+i}b\}$, 
 we have $\varphi(a^6)=a^6$, so $\Core\varphi=\langle a^6\rangle$, as required.
\end{proof}

\begin{lemma}\label{prop:quotient-skewtype}
Let $\varphi$ be a Cayley skew morphism on $\SD_{8n}$ of skew-type $3$, and let $\overline{\varphi}$ be 
the quotient on $\SD_{8n}/\Core\varphi$. Then $\overline{\varphi}$ also has skew-type $3$.
\end{lemma}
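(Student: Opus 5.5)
By Proposition~\ref{quotient}, the skew-type of $\overline{\varphi}$ divides $3$, so it is $1$ or $3$. It therefore suffices to rule out skew-type $1$, that is, to show that $\overline{\varphi}$ is not an automorphism of $\overline{G}:=\SD_{8n}/\Core\varphi$.

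The plan is to use the characterization of the kernel. By Proposition~\ref{Formula}(a), $\pi(g)$ depends only on the coset $g\Ker\varphi$. Moreover, $\bar\pi(\bar g)\equiv\pi(g)\pmod{|\overline{\varphi}|}$. Suppose, for a contradiction, that $\overline{\varphi}$ is an automorphism. Then $\pi(g)\equiv 1\pmod{|\overline{\varphi}|}$ for all $g$. The key structural input I would use is the standard fact that $\Core\varphi$ is characterized as the largest $\varphi$-invariant normal subgroup inside $\Ker\varphi$, together with the known result that, for $N\le \Ker\varphi$, $\Ker\overline{\varphi}=\Ker\varphi/N$ whenever $N=\Core\varphi$. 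I would try to prove this directly rather than cite it. For $g\in G$, $\bar g\in\Ker\overline{\varphi}$ means $\pi(g)\equiv 1\pmod{\bar m}$, where $\bar m=|\overline{\varphi}|$. Then $\varphi^{\bar m}$ acts on cosets of $\Core\varphi$ trivially (as $\overline{\varphi}^{\bar m}=1$). Using Proposition~\ref{Formula}(c) one gets $\varphi(gh)=\varphi(g)\varphi^{\pi(g)}(h)$, with $\varphi^{\pi(g)}(h)\equiv\varphi(h)$ modulo $\Core\varphi$. Hence $\varphi(gh)\equiv\varphi(g)\varphi(h)$ modulo $\Core\varphi$ for all such $g$, which is exactly the automorphism property of $\overline{\varphi}$ restricted to those cosets.

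To get an honest contradiction, I would instead work concretely, using Lemma~\ref{lemma Core}. We have $\Core\varphi=\langle a^3\rangle$, or $\langle a^6\rangle$ when $m=1$. In the first case $\overline{G}\cong \mathrm{D}_6$, and in the second $\overline{G}=\SD_{24}/\langle a^6\rangle\cong \mathrm{D}_{12}$ (by the presentation as in Lemma~\ref{auto}). The image $\overline{X}$ is a Cayley orbit of $\overline{\varphi}$, since $\nu(X)$ generates, is inverse-closed, and is a single $\overline{\varphi}$-orbit. Hence $\overline{\varphi}$ is a Cayley skew morphism on a small dihedral group. If $\overline{\varphi}$ were an automorphism, then $\varphi^{\bar m}$ would fix every coset of $\Core\varphi$, and $\pi\equiv1\pmod{\bar m}$ would hold on all of $G$. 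I would then show that $\Ker\varphi/\Core\varphi$ is a subgroup of index $3$ in $\overline{G}$ that is $\overline{\varphi}$-invariant and normal. In $\mathrm{D}_6$ or $\mathrm{D}_{12}$ no index-$3$ subgroup is normal: its preimage would be one of $H_1,H_2,H_3$ of Lemma~\ref{Index3}, none of which is normal, since $\delta_{1,2}$ permutes them and conjugation by $a$ moves $b$ to $a^{2-2n}b\notin H_1$. So the preimage $\Ker\varphi$ would have to be $\varphi$-invariant normal, contradicting the maximality defining $\Core\varphi$.

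The main obstacle is the step showing that $\overline{\varphi}$ being an automorphism forces $\Ker\varphi$ itself to be $\varphi$-invariant. I would attempt it via Proposition~\ref{Period}. If $\overline{\varphi}$ were the identity we would be done quickly, since then $\varphi$ is smooth and hence preserves cosets of $\Ker\varphi$. In general I would use the corresponding argument on $\varphi^{\ell}$ together with the orbit structure of $\nu(X)$ in $\mathrm{D}_6$ or $\mathrm{D}_{12}$ (Proposition~\ref{D2n}). A skew-type-$1$ Cayley orbit there consists of reflections only. This is impossible here, because $X$ must contain $a$-type elements (as $X$ generates and $\Ker\varphi\supseteq\langle a^3\rangle$ is not all of $\langle a\rangle$), and their images include nontrivial rotations.
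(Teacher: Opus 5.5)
There is a genuine gap. Your argument ultimately rests on the claim that $X$ must contain $a$-type elements ``as $X$ generates and $\Ker\varphi\supseteq\langle a^3\rangle$ is not all of $\langle a\rangle$''. That justification does not work. Sets of $b$-type elements generate $\SD_{8n}$ perfectly well: for instance $b\cdot ab=a^{2n-1}$ generates $\langle a\rangle$. Nothing about $\Ker\varphi$ forces an $a$-type element into the orbit $X$.

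In fact this claim is equivalent to the lemma itself. If $X$ consists of $b$-type elements, then $\overline{X}$ consists of reflections of $\mathrm{D}_6$ or $\mathrm{D}_{12}$, which are involutions. So $\bar\chi\equiv0$, and \eqref{Dist} gives $\bar\pi\equiv1$ on the generating set $\overline{X}$; that is, $\overline{\varphi}$ is an automorphism. The missing step is therefore to show that an orbit made only of $b$-type elements is incompatible with skew-type $3$ on $\SD_{8n}$ itself. There the elements $a^{2j+1}b$ have order $4$, so the $\chi\equiv0$ trick is not directly available.

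The paper closes this with an intermediate quotient:
\begin{itemize}
\item $Z=\langle a^{2n}\rangle$ is the unique subgroup of order $2$ of the cyclic group $\Core\varphi$, on which $\varphi$ restricts to an automorphism, so $Z$ is $\varphi$-invariant.
\item In $\SD_{8n}/Z\cong\mathrm{D}_{4n}$ every $b$-type image is an involution, so the induced $\tilde\varphi$ is an automorphism by the $\chi\equiv0$ argument.
\item Since $|Z|=2$, one has $|\varphi|\in\{|\tilde\varphi|,2|\tilde\varphi|\}$. Hence $\pi\equiv1\pmod{|\tilde\varphi|}$ takes at most two values modulo $|\varphi|$, contradicting skew-type $3$.
\end{itemize}
Your congruence $\pi\equiv1\pmod{|\overline{\varphi}|}$ cannot play this role. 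The ratio $|\varphi|/|\overline{\varphi}|$ can be large (it equals $k$ for the maps $M(n;t)$), so it does not bound the number of values of $\pi$.

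The other routes you sketch do not close the gap either. The ``known result'' $\Ker\overline{\varphi}=\Ker\varphi/\Core\varphi$ is false in general. For example, $\varphi=(1\,3\,5)$ on $\mathbb{Z}_6$ is a skew morphism with $\Ker\varphi=\Core\varphi=\{0,2,4\}$, yet $\overline{\varphi}=\mathrm{id}$. So skew-type can drop under the core quotient, and the lemma genuinely depends on the structure of $\SD_{8n}$. Your attempted derivation of that identity only restates the definition of $\Ker\overline{\varphi}$.

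Likewise, you never explain why $\overline{\varphi}$ being an automorphism would make $\Ker\varphi/\Core\varphi$ normal and $\overline{\varphi}$-invariant. You rightly flag this as the main obstacle, but the fallback you offer for it is exactly the unsupported claim above. Your check that no index-$3$ subgroup of $\mathrm{D}_6$ or $\mathrm{D}_{12}$ is normal is correct, but that is not where the difficulty lies.
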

\begin{proof}
By Proposition~\ref{quotient}, the skew-type of $\overline{\varphi}$ divides $3$, so it is either $1$ or $3$.
Suppose that $\overline{\varphi}$ has skew-type $1$, so $\overline{\varphi}\in\Aut(\overline{G})$ where $\overline{G}=\SD_{8n}/\Core\varphi$. 
By Lemma~\ref{lemma Core}, $\Core\varphi=\langle a^3\rangle$ or $\langle a^6\rangle$. Since $n=3m$ and $m\geq 1$,
 we have $a^{2n}\in\Core\varphi$. Note that $Z=\langle a^{2n}\rangle$ is the unique central subgroup of order $2$
  contained in $\langle a\rangle$.
Since the restriction of $\varphi$ to $\Core\varphi$ is an automorphism, $Z$ is a $\varphi$-invariant normal subgroup
of $G$, inducing a Cayley skew-morphism $\tilde\varphi$ on the dihedral group $\SD_{8n}/Z\cong\mathrm{D}_{4n}$. 
  Now the generating orbit $p$ of $\varphi$ projects onto a generating orbit $\tilde p$ of $\tilde\varphi$. Since $\overline{\varphi}$
  is an automorphism, $\bar p$ consists of only $b$-type elements, which implies that 
  $p$ also consists of only $b$-type elements. Therefore, $\tilde p$ consists of $b$-type elements too. Therefore, $\tilde\varphi$
  is an automorphism of $\SD_{8n}/Z$. Since $|Z|=2$, $|p|=|\tilde p|$ or $2|\tilde p|$. It follows that the skew-type
  of $\varphi$ is at most $2$, a contradiction.
\end{proof}

%%%%%%%%%%%%%%%%%%%%%%%%%%%%Construction%%%%%%%%%%%%%%%%%%%%%%%%%%%%%%
%%%%%%%%%%%%%%%%%%%%%%%%%%%%Construction%%%%%%%%%%%%%%%%%%%%%%%%%%%%%%
\section{Explicit Constructions}
In this section, we show that the Cayley maps given in Theorem~\ref{Main}
are indeed regular Cayley maps of skew-type $3$ on the semidihedral groups $\SD_{8n}$.

Recall that two families of Cayley maps on  $\SD_{8n}$ have been presented in Theorem~\ref{Main}: The first family
$\CM(\SD_{24},X,p)$  is  exceptional, which exists only when $n=3$, while the second family 
$M(n,t)=\CM(\SD_{8n},X,p)$ is infinite, exists for all positive integers $n$ divisible by $3$, and 
 integers $t\in\mathbb{Z}_{4n}^*$ of odd multiplicative order with $t\equiv1\pmod{6}$.

 Note that when $n=3$, there is a unique Cayley map $M(3,1)$
 on $\SD_{24}$ in the second family. We shall denote this map by $\CM(\SD_{24}, X', p')$,
 where $X'=\{a,a^{-1},a^4b,a^8b\}$ and $p'=(a,a^{-1},a^4b,a^8b)$.

 The following theorem show that these two Cayley maps on $\SD_{24}$ are indeed non-isomorphic
 regular Cayley maps of skew-type $3$.

%%%%%%%%%%%%%%
\begin{theorem} \label{Class1}
The maps $\CM(\SD_{24},X,p)$ and $\CM(\SD_{24},X',p')$ are nonisomorphic regular 
Cayley maps of skew-type $3$ on the
semidihedral groups $\SD_{24}$.
\end{theorem}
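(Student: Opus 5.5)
To prove Theorem~\ref{Class1}, I will exhibit for each of the two maps an explicit skew morphism $\varphi$ on $G:=\SD_{24}$ whose restriction to the given generating set is the given cyclic permutation. I will then compute its kernel, and finally separate the two maps by an invariant.

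\textbf{Regularity.} I will use covering theory over the dihedral quotient. Put $Z=\langle a^{6}\rangle$, which is normal and of order $2$. Then $G/Z\cong\mathrm{D}_{12}$, since $\bar b\bar a\bar b^{-1}=\bar a^{5}=\bar a^{-1}$. I will define $\varphi$ by the formula $cg=\varphi(g)c^{\pi(g)}$ inside an explicit skew-product group $A=G\langle c\rangle$.

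Rather than build $A$ abstractly, I will do the following. For $X$ and $p$, I will propose $\varphi$ directly on the $24$ elements, with $\varphi|_{\langle a^3\rangle}$ or $\varphi|_{\langle a^6\rangle}$ an automorphism, matching the Remark. I will take the power function $\pi$ constant on the three cosets of $\Ker\varphi$. For $(X,p)$ this kernel is some subgroup of index $3$, which by Lemma~\ref{Index3} is $\langle a^3,a^{2s}b\rangle$. I will then take $\pi$ with values in $\{1,r,r'\}\subset\mathbb{Z}_{12}$ (respectively $\mathbb{Z}_4$ for $X'$), chosen from \eqref{Dist} using the distribution-of-inverses function of $p$.

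To verify the identity $\varphi(gh)=\varphi(g)\varphi^{\pi(g)}(h)$, it suffices, by the standard reduction, to check it for $g$ in a generating set and all $h$. Concretely, I will check that $\varphi(xh)=\varphi(x)\varphi^{\pi(x)}(h)$ defines $\varphi$ consistently by induction on word length in $X$. That amounts to checking the relations $a^{12}=b^2=1$ and $bab^{-1}=a^{5}$ are respected. For this I will use Proposition~\ref{Formula}(c).

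As a consistency check, I will use the projection to $\mathrm{D}_{12}$. The $X$-map should project onto the skew-type-$3$ map $p=(a^3,ab,a^5,a^3b,a,a^5b)$ of Proposition~\ref{D2n}(b.2), and $\varphi$ should be a lifting of that dihedral skew morphism. This reduces the check to verifying compatibility on the central $Z$-fibre. Similarly, the $X'$-map projects to $(a,a^{-1},ab,a^{-1}b)$ on $\mathrm{D}_6$ via $G/\langle a^3\rangle$.

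\textbf{Skew-type $3$.} Once $\pi$ is known, $\Ker\varphi=\{g:\pi(g)=1\}$. Its index is at most $3$ by construction, and it is exactly $3$ because the quotient skew morphism on the dihedral group has skew-type $3$ by Proposition~\ref{D2n}, combined with Proposition~\ref{quotient}.

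\textbf{Non-isomorphism.} The quickest argument is valency: $|X|=12$ while $|X'|=4$, so the underlying Cayley graphs have different degrees and the maps cannot be isomorphic. Alternatively, the cores differ, $\langle a^6\rangle$ versus $\langle a^3\rangle$, and the core is an equivalence invariant.

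\textbf{Main obstacle.} I expect the main work to be the regularity verification for the $12$-valent map. Finding a correct power function and checking the skew morphism identity on all relations is a finite but error-prone computation. If the hand check becomes unwieldy, I will instead realise $A$ concretely: as a permutation group on the $288$ arcs generated by $G_L$ and the rotation $\rho$ at $1_G$. I will then confirm that $|A|=|G|\cdot|X|$, which is Jajcay--\v{S}ir\'a\v{n}'s criterion for regularity.
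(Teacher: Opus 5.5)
\textbf{Verdict: there is a genuine gap.} The one substantive claim of the theorem is that $p$ and $p'$ extend to skew morphisms of $\SD_{24}$, and your proposal never establishes it. You never write down $\varphi$ on the $24$ elements, nor even the values of $\pi$: the kernel is left as ``$\langle a^3,a^{2s}b\rangle$'' and the values as ``$\{1,r,r'\}$''. So nothing is actually verified.

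The lifting picture over $\mathrm{D}_{12}$ does not fill this in. It only tells you that $\varphi(g)$ lies in a fibre $\{h,a^6h\}$. Choosing the lift consistently in each of the $12$ fibres, and checking the identity, is exactly the missing computation.

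The paper supplies it. For $p$ it takes $\varphi(a^ib^j)=a^{i-2+6j}b^{j+1}$ for $i$ odd and $\varphi(a^ib^j)=a^{-i+6j}b^j$ for $i$ even. It checks that this restricts to $p$ on $X$. It then verifies, for all $g$,
$\varphi(a^{3l}g)=\varphi(a^{3l})\varphi(g)$, $\varphi(ag)=\varphi(a)\varphi^{9}(g)$, $\varphi(bg)=\varphi(b)\varphi^{5}(g)$ and $\varphi(a^{-1}g)=\varphi(a^{-1})\varphi^{5}(g)$.
This gives $\pi\equiv1,5,9$ on $K$, $Ka^{-1}$, $Ka$ respectively, where $K=\langle a^3,a^2b\rangle$. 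For $p'$ it writes down an analogous explicit formula, piecewise in $i \bmod 3$.

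Once such a $\varphi$ is in hand, skew-type $3$ is immediate from the three distinct values of $\pi$. Your valency argument for non-isomorphism ($12$ versus $4$) is correct, and more explicit than the paper, which does not spell it out.

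The verification mechanisms you describe also would not work as stated.
\begin{enumerate}[\rm(i)]
\item If $\varphi$ is given explicitly, no relation check is needed. You verify $\varphi(xh)=\varphi(x)\varphi^{\pi(x)}(h)$ for $x$ in a $\varphi$-invariant generating set, such as the orbit $X$, and for all $h$.
\item If instead you define $\varphi$ on words in $X$ by that recursion, well-definedness requires that the recursively computed images of any two words in $X$ representing the same group element again agree. That is not the statement that the relations $a^{12}=b^2=1$ and $bab^{-1}=a^5$ are ``respected''; these are not even words in $X$.
\item Your fallback test is vacuous. The rotation $\rho(g,gx)=(g,gp(x))$ commutes with every left translation $L_a$, and $\langle\rho\rangle\cap G_L=1$. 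Hence $\langle G_L,\rho\rangle=G_L\times\langle\rho\rangle$ has order $24\cdot 12=288=|G||X|$ for every Cayley map on $\SD_{24}$ with $|X|=12$, regular or not.
\item If you meant the rotation at $1_G$ alone, fixing all other arcs, that permutation is not a map automorphism. The group it generates with $G_L$ is far larger than $\Aut(M)$.
\end{enumerate}
A correct computational criterion is that the monodromy group $\langle\rho,\lambda\rangle$ has order $288$. Alternatively, and more simply, exhibit $\varphi$ explicitly as the paper does.
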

\begin{proof}
The main idea to prove this theorem is the following: For $p$ and $p'$, we first construct 
a permutation $\varphi$ on $\SD_{24}$ which
 contains an orbit consistent with  the $p$ and $p'$, and then verify that it is a 
skew morphism of skew-type $3$. 

For  $p$, we define  $ \varphi $ on $ \SD_{24} $ as:
	\[
	\varphi(a^i b^j) = 
	\begin{cases} 
		a^{i-2+6j}b^{j+1}, & \text{if } i \text{ is odd}, \\
		a^{-i+6j}b^j, & \text{if } i \text{ is even}.
	\end{cases}
	\]
It is easily seen that 	$ \varphi $ preserves $ X $ 
	and the restriction of $ \varphi $ to $ X $ is $p$. Now, by the definition of $\varphi$, if $i$ is even, then
\begin{align}\label{cubic1}
\varphi(a^3 a^i b^j)=a^{i+1+6j}b^{j+1}=ab\cdot a^{-i+6j}b^j =\varphi(a^3)\varphi(a^i b^j);
\end{align}
if $i$ is odd, then 
\begin{align}\label{cubic2}
\varphi(a^3 a^i b^j)=a^{-3-i+6j}b^j=ab \cdot a^{i-2+6j}b^{j+1}  = \varphi(a^3)\varphi(a^i b^j).
\end{align}
Similarly, we have $\varphi(a^6 a^i b^j)=\varphi(a^6)\varphi(a^i b^j)$  and
	$\varphi(a^9 a^i b^j)=\varphi(a^9)\varphi(a^i b^j)$.
	
Moreover, if $i$ is even, then 
\begin{equation}\label{even}
\begin{aligned}
\varphi(a a^i b^j)&=a^{i-1+6j}b^{j+1}= a^{11}b\cdot a^{-i+6j}b^j = \varphi(a)\varphi^9(a^i b^j),\\
\varphi(b a^i b^j) &=a^{-5i+6+6j}b^{j+1} =a^6b\cdot a^{-i+6j}b^j = \varphi(b)\varphi^5(a^i b^j),\\
\varphi(a^{-1} a^i b^j)&=a^{i-3+6j}b^{j+1} =a^9b\cdot a^{-i+6j}b^j = \varphi(a^{-1})\varphi^5(a^i b^j);
\end{aligned}
\end{equation}
if $i$ is odd, then
\begin{equation}\label{odd}
\begin{aligned}
\varphi(a a^i b^j)&=a^{-1-i+6j}b^j =a^{11}b \cdot a^{i+6+6j}b^{j+1} = \varphi(a)\varphi^9(a^i b^j),\\
\varphi(b a^i b^j)&=a^{5i+4+6j}b^j = a^6b \cdot a^{i+2+6j}b^{j+1}=\varphi(b)\varphi^5(a^i b^j),\\
\varphi(a^{-1} a^i b^j)&=a^{1-i+6j}b^j =a^9b \cdot a^{i+2+6j}b^{j+1}=\varphi(a^{-1})\varphi^5(a^i b^j).  
\end{aligned}
\end{equation}
Taking the three possibilities $i\equiv0,1,-1\pmod{3}$ into
account and using the identities from \eqref{cubic1}--\eqref{odd}, it is now routine to verify that
$ \varphi $ is a skew morphism of $ \SD_{24} $ with the associated power  power function $ \pi $ given by
\[
\pi(g) = 
	\begin{cases} 
1, & \text{if $g\in \langle a^3,a^2b\rangle$,}\\ 
5,& \text{if $g\in\langle a^3,a^2b\rangle a^{-1}$,}\\
9, & \text{if $g\in\langle a^3,a^2b\rangle a$}.
	\end{cases}
	\]
Thus, $\Ker \varphi=\langle a^3,a^2b\rangle$
and the Cayley map $ M= \CM(\SD_{24}, X, p) $ is regular of skew-type~3.

For $p' = (a, a^{-1}, a^4b, a^8b)$, it is trivial to verify that the
permutation $\varphi$ defined by
\[
	\varphi(a^i b^j) = 
	\begin{cases} 
		a^{i+6j}b^j, & \text{if } i \equiv 0 \pmod{3}, \\
		a^{i-2+6j}b^j, & \text{if } i \equiv 1 \pmod{3}, \\
		a^{i+5}b^{j+1}, & \text{if } i \equiv -1 \pmod{3}
	\end{cases}
	\]
is a desired skew morphism. 
\end{proof}

In what follows we deal with the remaining case  $n=3m\geq 6$.
%%%%%%%%%%%%%%%

\begin{theorem}\label{Class2}
The maps $M(n;t)=\CM(\SD_{8n},X,p)$ $(n\geq 6)$ given in Theorem~\ref{Main}(b) are regular
Cayley maps of skew-type $3$ on the semidihedral groups $\SD_{8n}$.
\end{theorem}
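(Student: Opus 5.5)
The plan mirrors the proof of Theorem~\ref{Class1}: write down an explicit permutation $\varphi$ of $\SD_{8n}$ whose restriction to $X$ is the cycle $p=(x_i)_{0\le i<4k}$, and show that $\varphi$ is a skew morphism whose power function takes exactly three values.

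\textbf{Step 1: the kernel.} The kernel should be one of the three index-$3$ subgroups of Lemma~\ref{Index3}. Since $t\equiv1\pmod 6$, every $t^i\equiv1\pmod 6$. Hence the $a$-type elements of $X$ lie in $a\langle a^3\rangle$ or $a^{-1}\langle a^3\rangle$. The $b$-type elements have exponents $t^i+3\equiv1$ and $-t^i+3+2n\equiv -1\pmod 3$, using $3\mid n$. So the elements of $X$ fall into the two nontrivial cosets of $H_1=\langle a^3,b\rangle$. I therefore expect $\Ker\varphi=H_1$, with $\Core\varphi=\langle a^3\rangle$ as predicted by Lemma~\ref{lemma Core}.

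\textbf{Step 2: a closed formula for $\varphi$.} I write $\varphi(a^ib^j)$ piecewise according to $i\bmod 3$ and the parity data of $i$ and $j$. On the kernel coset, $\varphi$ acts as the automorphism $\delta_{t,0}$ (possibly composed with a sign change on the $b$-part). On the other two cosets, $\varphi$ combines $a^i\mapsto a^{\pm ti}$ with a translation by $a^{3}b$, $a^{3+2n}b$ or their analogues. The formula is fitted so that
\[
x_{4j}\mapsto x_{4j+1}\mapsto x_{4j+2}\mapsto x_{4j+3}\mapsto x_{4j+4}.
\]
The two projections serve as a guide. Modulo $\langle a^3\rangle$, $\varphi$ should induce the skew-type-$3$ skew morphism of $\mathrm{D}_6$ with orbit $(a,a^{-1},ab,a^{-1}b)$ from Proposition~\ref{D2n}(a.2). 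Modulo $Z=\langle a^{2n}\rangle$, it should induce a skew morphism of $\mathrm{D}_{4n}$ from the dihedral family of \cite{Zhang2015a}. So $\varphi$ is a lifting of known skew morphisms, and the formula can be read off by lifting the dihedral formula and correcting by $a^{2n}$ on $b$-type elements.

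\textbf{Step 3: the skew-morphism identity.} I take $\pi\equiv1$ on $H_1$ and constants $r,s$ on the cosets $H_1a$ and $H_1a^{-1}$. From \eqref{Dist}, $\pi(x)\equiv\chi(\varphi(x))-\chi(x)+1$. Since every $x\in X$ has inverse at distance $\chi=1$ or $3$ along $p$, the natural guesses are $r,s\in\{1+2k,\,1-2k\}$, in analogy with the values $5,9$ in the $\SD_{24}$ example. I will first compute $|\varphi|$, expected to be $4k$.

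It then suffices to check $\varphi(gh)=\varphi(g)\varphi^{\pi(g)}(h)$ for $g$ running over $a^3$, $b$, $a$ and $a^{-1}$, and for all $h$. The reason is that $\Ker\varphi$ is generated by $a^3$ and $b$, on which $\varphi$ should be multiplicative, and the remaining cases follow by composition using Proposition~\ref{Formula}(b),(c). To handle these checks I need closed forms for $\varphi^{r}$ and $\varphi^{s}$, obtained by computing $\varphi^4$ first; it should be the automorphism $\delta_{t^4,0}$ up to a fixed translation pattern.

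\textbf{Step 4: skew-type.} With the identity established, $\Ker\varphi=H_1$ has index $3$ provided $r,s\not\equiv1\pmod{4k}$. This holds because $k$ is odd and the values of $\chi$ are not all equal. Regularity of $M(n;t)$ then follows from the Jajcay--\v{S}ir\'a\v{n} criterion. Generation of $\SD_{8n}$ by $X$ is immediate, since $X$ contains $a$ and $a^{4}b$.

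\textbf{Main obstacle.} I expect the hardest part to be pinning down the correct piecewise formula for $\varphi$ off the orbit $X$ and computing $\varphi^{r}$ and $\varphi^{s}$ in closed form, because the twist $bab^{-1}=a^{2n-1}$ introduces the extra $a^{2n}$ terms. My first attempt is to impose the covering relation to $\mathrm{D}_{4n}$ and then resolve the remaining $\pm a^{2n}$ ambiguity using the requirement $\varphi(a^{2n})=a^{2n}$ together with multiplicativity on $H_1$.
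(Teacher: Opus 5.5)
Your overall plan (write $\varphi$ down explicitly, check the skew identity at a few generators, read off $\pi$) is the one the paper follows. However, the predictions you use to build $\varphi$ are wrong, so the construction as you set it up cannot succeed.

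\textbf{The kernel is not $H_1=\langle a^3,b\rangle$.} Since $t$ is odd, the exponents $t^i+3$ and $-t^i+3+2n$ are even. By \eqref{Order}, every $x_{4i+2}$ and $x_{4i+3}$ is therefore an involution, so $\chi(x_{4i+2})=\chi(x_{4i+3})=0$. Then \eqref{Dist} forces $\pi(x_{4i+2})=1$, i.e.\ $x_{4i+2}\in\Ker\varphi$. But $x_{4i+2}=a^{t^{4i+2}+3}b$ has exponent $\equiv1\pmod 3$, so it lies in $\langle a^3,ab\rangle$ and not in $H_1$. Hence $\Ker\varphi=\langle a^3,ab\rangle$, and in particular $b\notin\Ker\varphi$. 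The paper has $\varphi(bg)=\varphi(b)\varphi^{2k+1}(g)$, so checking multiplicativity at $b$, as in your Step~3, would fail. Your own Step~2 guide already rules out $H_1$: in the $\mathrm{D}_6$ quotient, $\bar\pi(\bar a\bar b)=1$ but $\bar\pi(\bar b)=\bar\pi(\bar a)=3$. Observing that $X$ avoids $H_1$ does not identify the kernel; the distribution of inverses does. Your description of $\varphi$ on the kernel as $\delta_{t,0}$ is also off: $\varphi(a^3)=a^{3(2m-t)}$, and $\varphi(ab)=a^{2t+3}b\notin\Ker\varphi$.

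\textbf{The values of $\pi$ are also wrong.} It is false that every $x\in X$ has its inverse at distance $1$ or $3$ along $p$. The $b$-type elements have $\chi=0$. Also $a^{-1}=x_{4j+1}$ with $4j+1\equiv0\pmod k$, so $\chi(x_0)$ is the residue modulo $4k$ that is $\equiv1\pmod4$ and $\equiv0\pmod k$, namely $k$ or $3k$. Likewise $\chi(x_1)$ is the residue that is $\equiv3\pmod 4$ and $\equiv0\pmod k$. Hence $\pi(x_0)=2k+1$, and $\pi(x_1)=k+1$ or $3k+1$ according as $k\equiv1$ or $3\pmod4$.

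Your two candidates $1+2k$ and $1-2k$ are the same residue modulo $4k$. So your ansatz gives $\pi$ only two values, i.e.\ skew-type $2$. It also misses that $\pi$ must be even on $\Ker\varphi\,a^{-1}$, since this coset projects to $\bar a^{-1}$, where $\bar\pi=2$.

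\textbf{The main obstacle you name is the substance of the paper's proof.} The paper gives:
\begin{itemize}
\item an explicit piecewise formula for $\varphi(a^ib^j)$ according to $i\bmod 3$, e.g.\ $a^{(2m-t)i+3tj+3j}b^j$ for $i\equiv0$;
\item closed forms for $\varphi^{4l},\dots,\varphi^{4l+3}$, obtained by induction;
\item the identities $\varphi(a^{3l}g)=\varphi(a^{3l})\varphi(g)$, $\varphi(ag)=\varphi(a)\varphi^{2k+1}(g)$, $\varphi(bg)=\varphi(b)\varphi^{2k+1}(g)$, and $\varphi(a^{-1}g)=\varphi(a^{-1})\varphi^{k+1}(g)$ or $\varphi(a^{-1})\varphi^{3k+1}(g)$.
\end{itemize}
Your proposal leaves all of this open. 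So even after the kernel and the values of $\pi$ are corrected, there is no proof yet.
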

%%%%%%%%%%%%%
\begin{proof} 
We define a permutation on $\SD_{8n}$ as
\begin{align*}
	\varphi(a^i b^j) =
	\begin{cases}
		a^{(2m-t) i + 3tj + 3j} b^j ,& i \equiv 0 \pmod{3}, \\
		a^{(2m-t) i + 3tj + 3j - 2m} b^j ,& i \equiv 1 \pmod{3}, \\
		a^{(2m-t) i + 3tj - 3j + 2m + 3} b^{j+1} ,& i \equiv -1 \pmod{3}.
	\end{cases}
\end{align*}
Recall that $t\equiv1\pmod{6}$ and the multiplicative order $k=o_{4n}(t)$ of  $t\in\mathbb{Z}_{4n}$
 is odd. Using induction on $l$, we derive the following
 identities:
\begin{align*}
	\varphi^{4l}(a^i b^j) &= a^{it^{4l} - 3t^{4l} j + 3j} b^j,\\
	\varphi^{4l+1}(a^i b^j) &= 
	\begin{cases}
		a^{(2m - t^{4l+1})i + 3t^{4l+1} j + 3j} b^j ,& i \equiv 0 \pmod{3}, \\
		a^{(2m - t^{4l+1})i + 3t^{4l+1} j + 3j - 2m} b^j ,& i \equiv 1 \pmod{3}, \\
		a^{(2m - t^{4l+1})i + 3t^{4l+1} j -3j + 2m + 3} b^{j+1} ,& i \equiv -1 \pmod{3},
	\end{cases}\\
	\varphi^{4l+2}(a^i b^j) &= 
	\begin{cases} 
		a^{it^{4l+2} - 3t^{4l+2} j + 3j} b^j ,& i \equiv 0 \pmod{3}, \\
		a^{it^{4l+2} - 3t^{4l+2} j - 3j + 3} b^{j+1} ,& i \equiv 1 \pmod{3}, \\
		a^{it^{4l+2} - 3t^{4l+2} j - 3j + 6m + 3} b^{j+1} ,& i \equiv -1 \pmod{3},
	\end{cases}\\
	\varphi^{4l+3}(a^i b^j) &= 
	\begin{cases}
		a^{(2m - t^{4l+3})i + 3t^{4l+3} j + 3j} b^j ,& i \equiv 0 \pmod{3}, \\
		a^{(2m - t^{4l+3})i + 3t^{4l+3} j - 3j + 4m + 3} b^{j+1} ,& i \equiv 1 \pmod{3}, \\
		a^{(2m - t^{4l+3})i + 3t^{4l+3} j + 3j + 2m} b^j ,& i \equiv -1 \pmod{3},
	\end{cases}
\end{align*}
Using these identities, it is routine to verify that $\varphi$  contains $X$ as an orbit, and 
 the restriction of $\varphi$ to $X$ is  the given permutation $p$ on $X$.

Moreover, by the definition of $\varphi$, for any integer $l$, we have 
\begin{align}
\varphi(a^{3l} a^i b^j) &= 
\begin{cases} 
    a^{(2m-t) (i+3l) + 3tj + 3j} b^j, & i \equiv 0 \pmod{3},\\
    a^{(2m-t) (i+3l) + 3tj + 3j - 2m} b^j, & i \equiv 1 \pmod{3},\\
    a^{(2m-t) (i+3l) + 3tj - 3j + 2m + 3} b^{j+1}, & i \equiv -1 \pmod{3},
\end{cases} \notag \\
&= 
\begin{cases}
    a^{(2m-t) 3l} a^{(2m-t) i + 3tj + 3j} b^j, & i \equiv 0 \pmod{3},\\
    a^{(2m-t) 3l} a^{(2m-t) i + 3tj + 3j - 2m} b^j, & i \equiv 1 \pmod{3},\\
    a^{(2m-t) 3l} a^{(2m-t) i + 3tj - 3j + 2m + 3} b^{j+1}, & i \equiv -1 \pmod{3},
\end{cases} \notag \\
&= \varphi(a^{3l}) \varphi(a^i b^j), \label{cubic2}
\end{align}
and
\begin{align*}
\varphi^{2k+1}(a^ib^j)&=\varphi^{4(k-1)/2+3}(a^ib^j)\\&=
\begin{cases}
	a^{(2m-t^{2k+1})i+3t^{2k+1}j+3j}b^j, &i \equiv 0 \pmod{3}\\
	a^{(2m-t^{2k+1})i+3t^{2k+1}j-3j+4m+3}b^j, &i \equiv 1 \pmod{3}\\
	a^{(2m-t^{2k+1})i+3t^{2k+1}j+3j+2m}b^j, &i \equiv -1 \pmod{3}
\end{cases}\\&=
\begin{cases}
	a^{(2m-t)i+3tj+3j}b^j, &i \equiv 0 \pmod{3}\\
	a^{(2m-t)i+3tj-3j+4m+3}b^j, &i \equiv 1 \pmod{3}\\
	a^{(2m-t)i+3tj+3j+2m}b^j, &i \equiv -1 \pmod{3}.
\end{cases}
\end{align*}
Therefore, 

\begin{align}
	\varphi(aa^ib^j)&=
	\begin{cases}
		a^{[(i+1)t+(3-3t)j-1](2m-1)-1+6j}b^j, &i \equiv 0 \pmod{3}\\
		a^{[(i+1)t+(3-3t)j+1](2m-1)+4}b^{j+1}, &i \equiv 1 \pmod{3}\\
		a^{[(i+1)t+(3-3t)j](2m-1)+6j}b^j, &i \equiv -1 \pmod{3}
	\end{cases}\notag
	\\&=
	\begin{cases}
		a^{-t} a^{(2m-t)i+3tj+3j}b^j, &i \equiv 0 \pmod{3}\\
		a^{-t} a^{(2m-t)i+3tj-3j+4m+3}b^j, &i \equiv 1 \pmod{3}\\
		a^{-t} a^{(2m-t)i+3tj+3j+2m}b^j, &i \equiv -1 \pmod{3}
	\end{cases}\notag
	\\&=\varphi(a)\varphi^{2k+1}(a^ib^j)\label{atype}
\end{align}
and 
\begin{align}
\varphi(ba^ib^j)&=\varphi(a^{(6m-1)i}b^{j+1}) \notag\\
&= \begin{cases}
    a^{[(6m-1)it+3-3t-3j+3tj](2m-1)+6-6j}b^{j+1}, & i \equiv 0 \pmod{3}, \\
    a^{[(6m-1)it+3-3t-3j+3tj+1](2m-1)+4}b^j, & i \equiv 1 \pmod{3}, \\
    a^{[(6m-1)it+3-3t-3j+3tj-1](2m-1)-1+6-6j}b^{j+1}, & i \equiv -1 \pmod{3},
\end{cases}  \notag\\
&= \begin{cases}
    a^{3t+3}b  a^{(2m-t)i+3tj+3j}b^j, & i \equiv 0 \pmod{3}, \\
    a^{3t+3}b  a^{(2m-t)i+3tj-3j+4m+3}b^j, & i \equiv 1 \pmod{3}, \\
    a^{3t+3}b  a^{(2m-t)i+3tj+3j+2m}b^j, & i \equiv -1 \pmod{3},
\end{cases} \notag \\
&= \varphi(b)\varphi^{2k+1}(a^ib^j). \label{btype}
\end{align}

Similarly, if $k \equiv 1 \pmod{4}$, then
\begin{align*}
	\varphi^{k+1}(a^ib^j)&= \varphi^{4(k-1)/4+2}(a^ib^j)\\
	&=\begin{cases}
	 a^{it^{k+1}-3t^{k+1}j+3j}b^j,  &i \equiv 0 \pmod{3},\\
	 a^{it^{k+1}-3t^{k+1}j-3j+3}b^{j+1}, &i \equiv 1 \pmod{3},\\
	 a^{it^{k+1}-3t^{k+1}j-3j+6m+3}b^{j+1},  &i \equiv -1 \pmod{3},
	\end{cases}\\&=
	\begin{cases}
		a^{it-3tj+3j}b^j,  &i \equiv 0 \pmod{3},\\
		a^{it-3tj-3j+3}b^{j+1}, &i \equiv 1 \pmod{3},\\
		a^{it-3tj-3j+6m+3}b^{j+1},  &i \equiv -1 \pmod{3}.
	\end{cases}
\end{align*}
If $k \equiv 3 \pmod{4}$, then
\begin{align*}
	\varphi^{3k+1}(a^ib^j)&= \varphi^{4(3k-1)/4+2}(a^ib^j)\\
	&=
	\begin{cases}
		a^{it^{k+1}-3t^{k+1}j+3j}b^j,  &i \equiv 0 \pmod{3},\\
		a^{it^{k+1}-3t^{k+1}j-3j+3}b^{j+1}, &i \equiv 1 \pmod{3},\\
		a^{it^{k+1}-3t^{k+1}j-3j+6m+3}b^{j+1},  &i \equiv -1 \pmod{3},
	\end{cases}\\&=
	\begin{cases}
		a^{it-3tj+3j}b^j,  &i \equiv 0 \pmod{3},\\
		a^{it-3tj-3j+3}b^{j+1}, &i \equiv 1 \pmod{3},\\
		a^{it-3tj-3j+6m+3}b^{j+1},  &i \equiv -1 \pmod{3}.
	\end{cases}
\end{align*}
Since 
\begin{align*}
	\varphi(a^{-1}a^ib^j)&=
	\begin{cases}
		a^{[(i-1)t+(3-3t)j+1](2m-1)+4}b^{j+1}, &i \equiv 0 \pmod{3},\\
		a^{[(i-1)t+(3-3t)j](2m-1)+6j}b^j, &i \equiv 1 \pmod{3},\\
		a^{[(i-1)t+(3-3t)j-1](2m-1)-1+6j}b^j, &i \equiv -1 \pmod{3},
	\end{cases}\\&=
	\begin{cases}
		a^{t+3}b  a^{it-3tj+3j}b^j, &i \equiv 0 \pmod{3},\\
		a^{t+3}b  a^{it-3tj-3j+3}b^{j+1}, &i \equiv 1 \pmod{3},\\
		a^{t+3}b  a^{it-3tj-3j+6m+3}b^{j+1}, &i \equiv -1 \pmod{3},
	\end{cases}
\end{align*}
we get
\begin{equation}\label{a-1type}
\begin{aligned}
	\varphi(a^{-1}a^ib^j)=
	\begin{cases}
		\varphi(a^{-1})\varphi^{k+1}(a^ib^j), &k\equiv 1 \pmod{4},\\
		\varphi(a^{-1})\varphi^{3k+1}(a^ib^j), &k\equiv 3 \pmod{4}.
	\end{cases}
\end{aligned}
\end{equation}
It follows from the identities \eqref{cubic2}--\eqref{a-1type}  that
 $\varphi$ is a skew-morphism of $\SD_{8n}$ with the associated power function $\pi$
 determined by the following formula: If $k\equiv1\pmod{4}$, then
  \[
 \pi(g)=
 \begin{cases}
 1,& \text{if $g\in \langle a^3, ab \rangle$,}\\
 2k+1, &\text{if $g\in \langle a^3, ab \rangle a$,}\\
 k+1, &\text{if $g\in \langle a^3, ab \rangle a^{-1}$;}
  \end{cases}
 \]
 if $k\equiv3\pmod{4}$, then
  \[
 \pi(g)=
 \begin{cases}
 1,& \text{if $g\in \langle a^3, ab \rangle$,}\\
 2k+1, &\text{if $g\in \langle a^3, ab \rangle a$,}\\
  3k+1, &\text{if $g\in \langle a^3, ab \rangle a^{-1}$}.
 \end{cases}
 \]
Therefore, $\Ker \varphi=\langle a^3, ab \rangle$, and $M=\CM(\SD_{8n},X,p)$ is regular and skew-type $3$.

Finally, suppose that $M(n,t)=\CM(\SD_{8n},X,p)$ and $M(n,t')=\CM(\SD_{8n},X',p')$
are two maps corresponding to the parameters $t$ and $t'$ as described in Theorem~\ref{Main}(b).
If $M(n,t) \cong M(n,t')$, then there exists an automorphism $\delta $ of $\SD_{8n}$ such that $\delta(X)=X' $ 
and $\delta p=p'\delta$. Since $\langle a \rangle$ is a characteristic subgroup of $\SD_{8n}$, we 
have $\delta(a)=a^u$ for some $u\in\mathbb{Z}_{4n}^*$. Then $\delta p(a)=p' \delta(a) $ implies 
$a^{-tu}=a^{-t'u}$, and hence $t=t'$. Therefore,
for fixed $n$, the map $M(n,t)$ is uniquely determined by the  parameter $t\in\mathbb{Z}_{4n}^*$ up to isomorphism. 
 \end{proof}

%%%%%%%%%%%%%%%%%%%%%%%%%%%%%%%%%%%%%%%%%%%%%%%%%%%%
%%%%%%%%%%%%%%%%%%%%%%%%%%%%%% Classification %%%%%%%%%%%
\section{Classification Results}
In this section, we show that every regular Cayley map of skew-type $3$ on the semidihedral group $\SD_{8n}$
is isomorphic to some map given in Theorem~\ref{Main}. Thereby, combining with
Theorem~\ref{Class1} and \ref{Class2} we will complete the classification.

The main idea for the proof is the covering techniques introduced in Section 2. We explain it briefly as follows: 
Suppose that $M=\CM(\SD_{8n},X,p)$ is a regular Cayley map of skew-type $3$ on $\SD_{8n}$, and let
$\varphi$ be the corresponding Cayley skew morphism. By Lemma~ \ref{lemma Core}, $n=3m$ for some positive
integer $m$, either (a) $ \Core\varphi =\langle a^3 \rangle, \langle a^6 \rangle$ if $m=1$, 
or  (b) $\Core\varphi = \langle a^3 \rangle $ if $m\geq 2$. By Lemma~\ref{prop:quotient-skewtype},
 the quotient map $\overline{M}=\CM(\overline{\SD}_{8n},\overline{X},\bar p)$
induced by $\Core\varphi$ is a regular Cayley map of skew-type $3$ on a smaller group $\overline{\SD}_{8n}$.
 These quotient maps  are known (see Proposition~\ref{D2n} and Lemma~\ref{auto}): If $\Core\varphi = \langle a^3 \rangle $,
then  $\overline{\SD}_{8n}\cong \mathrm{D}_6$. By Lemma~\ref{auto}(a), every automorphism of $\overline{\SD}_{8n}$
can be induced by an automorphism of $\SD_{8n}$ via the canonical homomorphism 
$\Phi:\Aut(\SD_{8n})\to\Aut(\overline{\SD}_{8n}),\delta\mapsto\delta^*$. Thus, by Proposition~\ref{D2n}(a), we may assume
 \[
 \bar p=(\bar a,\bar a^{-1},\bar a\bar b,\bar a^{-1}\bar b).
 \]
If $\Core\varphi = \langle a^6 \rangle $, then  $\overline{\SD}_{8n}\cong\mathrm{D}_{12}$.
By Lemma~\ref{auto}(b), $[\Aut(\overline{\SD}_{8n}):\Phi(\Aut(\SD_{8n}))]=2$, 
so by Proposition~\ref{D2n}(b), we may assume
 \[
 \bar p=(\bar a^3,\bar a\bar b,\bar a^5,\bar a^3\bar b,\bar a,\bar a^5\bar b)\quad\text{or}\quad 
\bar p=(\bar a^3,\bar a^2\bar b,\bar a^5,\bar a^4\bar b,\bar a, \bar b).
 \]
 Note that the latter is obtained from the former by conjugation by 
 \[
 \bar\delta\in\Aut(\overline{\SD}_{8n})\backslash\Phi(\Aut(\SD_{8n})):\bar \delta:\bar a\mapsto \bar a,\bar b\mapsto \bar a\bar b.
 \]
For Case (a), by Proposition~\ref{Period}, $\varphi$ has period $4$, so $|X|=4k$ for some positive integer $k$, and
we may write
\begin{equation}\label{Perm}
p=(x_0,x_1,x_2,x_3,\ldots, x_{4i},x_{4i+1},x_{4i+2},x_{4i+3},\ldots, x_{4k-4},x_{4k-3},x_{4k-2},x_{4k-1}),
\end{equation}
where $\bar x_0=\bar a$. For each $i\in\{0,1,2,3\}$, we set  
\begin{equation}\label{Inverse}
X_i:=\{x_j\mid j\equiv i\pmod{4}\quad\text{and}\quad j\in\mathbb{Z}_{4k}\},
\end{equation}
so $X_i=X\cap\nu^{-1}(\overline{X}_i)$, where $\nu:\SD_{8n}\to\overline{\SD}_{4n}$ is the natrual epimorphism. 
Now we use the hypothesis
that $\varphi$ is a Cayley skew morphism of skew-type $3$ to determine the distribution-of-inverses function $\chi$,
the power function $\pi$, the permutation $p$, and finally, the skew morphism $\varphi$ itself. The 
method to deal with Case (b) is similar.

Due to technical reasons, we first deal with Case (a), where the group $\SD_{8n}=\SD_{24}$ is 
of the smallest order. 
%%%%%%%%%%%%%
\begin{theorem}\label{Main1}
Every regular Cayley map of skew-type $3$ on
the semidihedral group $\SD_{24}$ is isomorphic to one of the maps 
 constructed in Theorem~\ref{Class1}.
\end{theorem}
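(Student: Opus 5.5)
Let $M=\CM(\SD_{24},X,p)$ be regular of skew-type $3$ with skew morphism $\varphi$. By Lemma~\ref{lemma Core} (with $m=1$), $\Core\varphi=\langle a^3\rangle$ or $\langle a^6\rangle$, and we treat the two cases separately. In each case, Lemma~\ref{prop:quotient-skewtype} shows that the quotient $\overline{\varphi}$ on $\overline{G}:=\SD_{24}/\Core\varphi$ is a Cayley skew morphism of skew-type $3$ on $\mathrm{D}_6$, respectively $\mathrm{D}_{12}$. Proposition~\ref{D2n} together with Lemma~\ref{auto} then pins down $\bar p$ up to $\Phi(\Aut(\SD_{24}))$, exactly as described before the theorem. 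So, after replacing $M$ by an isomorphic map via some $\delta\in\Aut(\SD_{24})$, either $\bar p=(\bar a,\bar a^{-1},\bar a\bar b,\bar a^{-1}\bar b)$, or $\bar p$ is one of the two $6$-cycles on $\mathrm{D}_{12}$ listed there.

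\textbf{Case $\Core\varphi=\langle a^6\rangle$.} Here $|\Core\varphi|=2$, so $|X|=6$ or $12$, and each fibre $X\cap\nu^{-1}(\bar x)$ has size $1$ or $2$.
\begin{enumerate}
\item Because $X$ generates $\SD_{24}$ and the orbit lifts, the fibre sizes are constant along $\bar p$ (apply $\varphi$).
\item By Proposition~\ref{Lift}, fibres over involutions of $\overline{X}$ are inverse-closed. The preimages of $\bar a^3$ are $a^3,a^9$ (mutually inverse, of order $4$). The preimages of $\bar a\bar b$ are $ab,a^7b$, which are elements of order $4$, and they are mutually inverse since $(ab)^{-1}=a^{7}b$.
\item If $|X|=6$, then $X$ would contain a single element over $\bar a^3$, but that element is not self-inverse, contradicting inverse-closure. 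Hence $|X|=12$ and $X=\nu^{-1}(\overline{X})$.
\item In the first choice of $\bar p$ this forces $X=\{a^{\text{odd}},a^{\text{odd}}b\}$, the set in Theorem~\ref{Class1}. In the second choice one first checks which preimages of the $\bar a^{2i}\bar b$ are order-$2$ elements, since these are the only self-inverse candidates; this decides whether that choice survives or reduces to the first.
\item It remains to fix the cyclic order. We write $p=(x_0,\dots,x_{11})$ with $x_i$ lying over $\bar p$-position $i \bmod 6$. The distribution-of-inverses function $\chi$ must satisfy $\chi(x_i)=6$ on the two fibres over the involutions $\bar a^3,\bar a\bar b$. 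Then \eqref{Dist} combined with $\pi$ taking exactly three values, constant on cosets of $\Ker\varphi$ (an index-$3$ subgroup from Lemma~\ref{Index3}), pins down $\chi$ and hence $\pi$ on $X$.
\item We then use $\varphi(gh)=\varphi(g)\varphi^{\pi(g)}(h)$ with $g,h\in X$ and $|\varphi|=12$ to propagate $\varphi$ from $x_0$. With the residual freedom $\delta_{u,v}$ acting trivially on $\overline{G}$, we normalize $x_0=a^3$. Finitely many choices remain, and consistency should force exactly $p$ of Theorem~\ref{Class1}.
\end{enumerate}

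\textbf{Case $\Core\varphi=\langle a^3\rangle$.} Here $\varphi$ has period $4$, and we write $p$ as in \eqref{Perm} with the classes $X_i$ from \eqref{Inverse}. Each $X_i$ lies in a single coset of $\langle a^3\rangle$, and $|X|=4k$ with $k\mid 4$ since fibres have size at most $4$.
\begin{enumerate}
\item By Lemma~\ref{CD}, the common difference satisfies $d\equiv 4(\av(x)-1)$, where $\av$ is a homomorphism to $\mathbb{Z}_{k}^*$ trivial on the generators, likely giving $d=0$.
\item We then use inverse-closure: $X_0\cup X_1$ is inverse-closed, and so is $X_2\cup X_3$, where $X_2\subseteq\langle a^3\rangle ab$ (elements $a^{1+3j}b$) and $X_3\subseteq\langle a^3\rangle a^{-1}b$.
\item Together with $\pi$ taking three values, these constraints should force $k=1$ and $X=\{a^{\pm1},a^{s}b,a^{s'}b\}$.
\item After normalizing by $\delta_{u,v}$ with $u\equiv1,\ v\equiv0 \pmod 3$, this gives $p'=(a,a^{-1},a^4b,a^8b)$.
\end{enumerate}

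\textbf{Main obstacle and fallback.} The main obstacle is the combinatorial pinning of the cyclic order of $p$ in the $12$-element case, that is, showing no other interleaving of fibre elements yields a skew morphism. Since $\SD_{24}$ is small, we expect to fall back on a finite case check. For that check we would first exploit that $\varphi^{\ell}$ is an automorphism (auto-index dividing $12$), and verify candidates through Proposition~\ref{Formula}(b) on the generators $a$ and $b$.
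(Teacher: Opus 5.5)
Your overall strategy (reduce modulo $\Core\varphi$, classify the dihedral quotient, then lift using $\chi$ and $\pi$) is the paper's. However, the step that does the real work in the case $\Core\varphi=\langle a^3\rangle$ is missing.

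\textbf{The case $\Core\varphi=\langle a^3\rangle$.} You only assert that the constraints ``should force $k=1$''. The constraints you list (inverse-closed fibres, $d=0$, three values of $\pi$) cannot force this. For example, $k=2$ with $\chi(x_0)=\chi(x_4)=1$ and $X_2,X_3$ consisting of involutions satisfies all of them. Your reason for $k\mid 4$ (``fibres have size at most $4$'') likewise only gives $k\le 4$.

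The missing idea is the paper's parity argument.
\begin{itemize}
\item Since $\overline{\varphi}$ fixes $\bar b$, every $\varphi^i(b)$ lies in $b\,\Core\varphi$, so $\pi$ is constant on the orbit of $b$.
\item Since $a^{-1}b\cdot a=a^4b$ and $ba^{-1}=a^7b$ lie in $\Ker\varphi=\langle a^3,ab\rangle$, you get $\pi(a^{-1})=\pi(a^{-1}b)=\sum_{i=0}^{\pi(a^{-1})-1}\pi(\varphi^i(b))=\pi(a^{-1})\pi(a)$ in $\mathbb{Z}_{4k}$.
\item From $\pi(x)\equiv\bar\pi(\bar x)\pmod 4$ you have $\pi(a)\equiv 3$ and $\pi(a^{-1})\equiv 2\pmod 4$. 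Hence $\pi(a^{-1})(\pi(a)-1)\equiv 0\pmod{4k}$ forces $k$ odd.
\end{itemize}

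Your claim $k\mid 4$ is in fact true, but for a different reason. For $h\in\Core\varphi$ one has $\varphi^i(hg)=\varphi^i(h)\varphi^i(g)$. Also $\varphi^2$ is trivial on $\Core\varphi\cong\mathbb{Z}_4$, and $\overline{\varphi}^4=1$. So $\varphi^4$ acts on the coset $\Core\varphi\,a$ as left multiplication by a single element $c\in\Core\varphi$. Since $X_0$ is one $\varphi^4$-orbit in this coset, $k=|c|$ divides $4$.

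With both facts, $k=1$ follows at once. This would actually shortcut the paper, which only has $k\le|\Core\varphi|=4$ and must eliminate $k=3$ by a lengthy $\chi$-computation (Subcase~A.2).

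Even for $k=1$, your normalization does not yet give $p'$. After fixing $x_0=a$ and $x_2=a^4b$, the candidate $(a,a^{-1},a^4b,a^2b)$ remains and must be excluded by hand. For instance, $a^4b\cdot a^2b=a^2$ with $a^4b\in\Ker\varphi$ gives $\varphi(a^2)=a^2b\cdot a=a^7b$, whereas $\varphi(a^2)=\varphi(a)\varphi^{3}(a)=ab$.

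\textbf{The case $\Core\varphi=\langle a^6\rangle$.} Here you drop cases.
\begin{itemize}
\item Proposition~\ref{D2n}(b.2), combined with the index-$2$ image of $\Phi$, leaves four choices of $\bar p$, not two. The $4$-cycles $(\bar a,\bar a^{-1},\bar a\bar b,\bar a^{-1}\bar b)$ and $(\bar a,\bar a^{-1},\bar a^2\bar b,\bar b)$ must also be excluded. In both, $\overline{\varphi}$ fixes $\bar a^3$, so $\varphi(a^3)\in\{a^3,a^9\}$. Then $\langle a^3\rangle\le\Ker\varphi$ is $\varphi$-invariant, contradicting $\Core\varphi=\langle a^6\rangle$. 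The paragraph before the theorem omits these, but the proof needs them.
\item For the second $6$-cycle, your order-$2$ test decides nothing, because every preimage of $\bar a^{2i}\bar b$ is an involution. It is killed instead by $\chi(x_0)=6$ (as $x_6=x_0^{-1}$) and $\chi(x_1)=0$. These give $\pi(a^3)=\pi(x_0)\equiv -5\not\equiv 1\pmod{12}$, although $a^3\in\Ker\varphi$.
\item For the first $6$-cycle your plan agrees with the paper's, but the finite check is the actual content and you only announce it. One has $\chi=6$ on all four involution fibres, and the two possible inverse pairings among $x_2,x_4,x_8,x_{10}$ give two power functions. Of the eight resulting cycles, six fail the skew-morphism identity and the remaining two are interchanged by $\delta_{1,6}$.
\end{itemize}
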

\begin{proof}
Suppose that $ M = \CM(\SD_{24},X,p)  $ is a regular Cayley map of skew-type 3.
As just explained, by Lemma \ref{lemma Core},  we have $\Core\varphi = \langle a^3 \rangle $ 
 or $ \langle a^6 \rangle$.
%%%%%%%%%%%%%%% Case A %%%%%%%%%%%%%%%%%%%%%%%%%
\begin{case}[A]$\Core\varphi = \langle a^3 \rangle$.\par

In this case, $\overline{M} = \CM(\overline{\SD}_{24}, \overline{X}, \overline{p})$
is a regular Cayley map of skew-type $3$ on the dihedral group $\overline{\SD}_{24}\cong \mathrm{D}_6$,
and we can set
\begin{align}
\label{Form2}
	\bar{\varphi} = (\bar{1})(\bar{b})(\bar{a}, \bar{a}^{-1}, \bar{a}\bar{b}, \bar{a}^{-1}\bar{b})
	\quad\text{and}\quad
	\bar{p} = (\bar{a}, \bar{a}^{-1}, \bar{a}\bar{b}, \bar{a}^{-1}\bar{b}).
\end{align}
Note that
\[
\bar{\pi}(\bar a)=3,\quad \bar{\pi}(\bar a^{-1})=\bar{\pi}(\bar a^{-1}\bar b)=2\quad\text{and}\quad \bar{\pi}( \bar a\bar b)=1.
\]
 Since $[\SD_{8n}:\Ker \varphi]=3$, 
by Lemma~\ref{Index3}, $\Ker \varphi = \langle a^3, ab \rangle$, $\langle a^3, a^{-1}b\rangle$, or $\langle a^3, b \rangle$.
But $\Ker \overline{\varphi}=\langle \bar a\bar b\rangle$, so $b,a^{-1}b\notin\Ker \varphi$, we get 
$\Ker \varphi = \langle a^3, ab \rangle$.

Now we assume that $|X| = 4k$ for some positive integer $k$, and use the notation defined in \eqref{Perm} and \eqref{Inverse}
We distinguish two subcases.
%%%%%%%%%%
\begin{subcase}[A.1]$k=1$.\par
 In this subcase, since both $\overline{X}_2=\bar a\bar b$ and $\overline{X}_3=\bar a^{-1}\bar b$ are
  involutions, $x_2$ and $x_3$ are also involutions (see Lemma~\ref{Lift}) which must be elements of 
  $b$-type. Since $X$ is an inverse-closed generating set of $\SD_{24}$, we have $x_0=x_1^{-1}$,
 and  $\SD_{24}=\langle x_0, x_2\rangle$, so $\langle x_0\rangle=\langle a\rangle$. 
 Up to map isomorphism,  we can assume $p=(a,a^{-1}, a^{4}b,a^{3j-1}b)$ for some odd number $j=1,3\in\mathbb{Z}_4$,
  so $p = (a, a^{-1}, a^4b, a^{2}b)$, or $ p = (a, a^{-1}, a^{4} b, a^8 b).$ Note that the first case
   cannot occur since 
   \[
  \varphi(a^4b\cdot a^2b)=\varphi(a^{14}b)=\varphi(a^2b)=a\neq a^7b=  a^2ba=\varphi(a^4b)\varphi(a^2b)=  \varphi(a^4b\cdot a^2b).
   \]
 Therefore, up to map isomorphism, we can take $p = (a, a^{-1}, a^4 b, a^8 b) $.
 \end{subcase}

 %%%%%%
 %%%%%%
\begin{subcase}[A.2]$ k\geq 2 $.\par
Take $ c \in X $ such that $ \bar{c} = \bar{a}. $ 
Let $i\in\mathbb{Z}_k$ be an arbitrary integer. Then
\begin{align}
&\bar{x}_{4i} = \bar{a},  &&\bar {x}_{4i+1}  = \bar{a}^{-1}, &&\bar{x}_{4i+2}  = \bar{a}\bar{b}, &&\bar{x}_{4i+3}  = \bar{a}^{-1}\bar{b};\label{Proj1}\\
&\bar\chi(\bar{x}_{4i}) \equiv 1, &&\bar\chi(\bar{x}_{4i+1}) \equiv  3,  &&\bar\chi(\bar{x}_{4i+2}) \equiv  0, &&\bar\chi(\bar{x}_{4i+3}) \equiv  0\pmod{4};\label{Proj2}\\
&\bar\pi(\bar{x}_{4i}) \equiv 3, &&\bar\pi(\bar{x}_{4i+1}) \equiv  2,  &&\bar\pi(\bar{x}_{4i+2}) \equiv  1, &&\bar\pi(\bar{x}_{4i+3}) \equiv  2\pmod{4}.\label{Proj1}
\end{align}
It follows that both $\pi(x_{4i+1})$ and $\pi(x_{4i+3})$ are even, and $\pi(x_{4i})\neq 1$ is odd.
Since $ab\in\Ker \varphi$, we get $x_{4i+2}\in\Ker \varphi$, so $\pi(x_{4i+2})=1$.
By hypothesis, $\pi$ takes three distinct values in $\mathbb{Z}_{4k}$, therefore, we must have
\begin{align}\label{Form3}
\pi(x_{4i+1})\equiv\pi(x_{4i+3})\quad\text{and}\quad \chi(x_{4i+2})\equiv\chi(x_{4i+3})\pmod{4k}.
\end{align}
By \eqref{Proj2}, we may assume 
\[
\chi(x_0) \equiv 4l_0+1 ,\quad \chi(x_{1}) \equiv 4l_1-1 ,\quad\text{and}\quad
\chi(\varphi^{2}(c))=\chi(\varphi^{3}(c))\equiv 4l \pmod{4k},
\]
for some integers $l_0,l_1,l\in\mathbb{Z}_k$. Thus, using \eqref{Dist}, we have
\begin{align*}
&\pi(b)=\pi(a)=\pi(c) \equiv (4l_1-1)-(4l_0+1)+1 \equiv 4(l_1-l_0)-1 \pmod{4k}, \\
&\pi(a^{-1})=\pi(x_{1}) \equiv 4l-(4l_1-1)+1 \equiv 4(l-l_1)+2 \pmod{4k}.
\end{align*}
Since $\pi(\varphi^i(b))= 4(l_1-l_0)-1$ for all $i$, by Lemma~\ref{Formula}(b), we then obtain
\begin{align*}
4(l-l_1)+2&\equiv\pi(a^{-1})\equiv \pi(a^{-1}b) 
\equiv \sum_{i=0}^{\pi(a^{-1})-1} \pi(\varphi^i(b))\\
&\equiv \sum_{i=0}^{4(l-l_1)+1} \pi(\varphi^i(b))\equiv (4(l_1-l_0)-1)(4(l-l_1)+2)\pmod{4k},
\end{align*}
which reduces to
\[
4(l_1-l_0)(l-l_1)-2(l-l_1)+2(l_1-l_0)-1\equiv0\pmod{k}.
\]
Thus, $k$ is odd. Since $2\leq k\leq|\Core\varphi|=4$, we get $k=3.$ Therefore, we may write
\[
p=(x_0,x_1,x_2,x_3,x_4,x_5,x_6,x_7,x_8,x_9,x_{10},x_{11}),
\]
 where $x_0=c$ and
\[
\bar{x_i}=\begin{cases}
\bar{a},& i\equiv0\pmod{4},\\
\bar{a}^{-1},& i\equiv1\pmod{4},\\
\bar{a}\bar{b},& i\equiv2\pmod{4},\\
\bar{a}^{-1}\bar{b},& i\equiv3\pmod{4}.
\end{cases}
\]
 Note that, for any $i$, both $x_{4i}$ and $x_{4i+1}$ are elements of $a$-type, while
  both $x_{4i+2}$ and $x_{4i+3}$ are elements of $b$-type; since $X$ is closed under inverses, by \eqref{Order}, we have
  either  $  |x_{4i+2}|=|x_{4i+3}|= 2$ or $  |x_{4i+2}|=|x_{4i+3}|= 4$.
 
  By Proposition~\ref{Lift}, the subsets  $X_2=\{x_2,x_6,x_{10}\}$ and $X_3=\{x_3,x_7,x_{11}\}$ are 
both inverse-closed, so it cannot happen that $  |x_{4i+2}|=|x_{4i+3}|= 4$ for all $i$.  
If there exists some $i$ such that $  |x_{4i+2}|=|x_{4i+3}|=  |x_{4i+6}|=|x_{4i+7}|=2,$
  then, by Lemma~\ref{CD}, the common difference $d$ of $\chi$ is $0$, and so $  |x_{4i+2}|=|x_{4i+3}|= 2$ for all $i$.
  It follows that $x_2,x_6,x_{10}\in\Ker \varphi$. However, by \eqref{Order}, we find that 
  $\Ker \varphi=\langle a^3,ab\rangle$ contains only two elements 
  of $b$-type, that is, $a^4b$ and $a^{10}b$, a contradiction.

  Therefore, without loss of generality, we may assume
  \[
  |x_2|=|x_3|=2\quad\text{and}\quad |x_6|=|x_7|=|x_{10}|=|x_{11}|=4.
  \]
We have $x_2^{-1}=x_2$, $x_3^{-1}=x_3$, $x_6^{-1}=x_{10}$ and $x_7^{-1}=x_{11}$. Thus, $d=\chi(x_6)-\chi(x_2)=4$.
By \eqref{Proj2}, we may set $\chi(x_0)=4l_0+1$ and $\chi(x_1)=4l_1-1$. Then
\[
\chi(x_4)=4l_0+5,\quad\chi(x_5)=4l_1+3,\quad\chi(x_8)=4l_0+9,\quad \chi(x_9)=4l_1+7.
\]
Using \eqref{Dist}, one may now compute $\pi(x_i)$ for all $i$, as shown below:
\[
\pi(x_i)=
\begin{cases}
4(l_1-l_0)-1,& i\equiv0\pmod{4},\\
-4l_1+2, &i\equiv 1\pmod{4},\\
1, &i\equiv 2\pmod{4},\\
\pi(x_3)=4l_0+6 &i\equiv 3\pmod{4}.
\end{cases}
\]
By \eqref{Form3}, we have $-4l_1+2=\pi(x_1)=\pi(x_3)=4l_0+6$, which reduces to 
\begin{equation}\label{EL1}
l_0+l_1\equiv2\pmod{3}.
\end{equation}
Note that $\pi(a^{-1})=\pi(x_1)=-4l_1+2$ and $\pi(b)=\pi(a)=\pi(x_0)=4(l_1-l_0)-1$, so by Lemma~\ref{Form3}(b), we have
\[
4l_0+6=\pi(a^{-1}b)=\sum_{i=0}^{\pi(a^{-1})-1}\pi(\varphi^i(b))=\pi(a^{-1})\pi(b)=(-4l_1+2)(4(l_1-l_0)-1)\pmod{12},
\]
which reduces $-16l_1(l_1-l_0)+12(l_1-l_0)\equiv8\pmod{12}$. Thus, we get
\begin{equation}\label{EL2}
l_1(l_1-l_0)\equiv 1\pmod{3}.
\end{equation}
Solving the system consisting of \eqref{EL1} and \eqref{EL2} gives a unique solution $(l_0, l_1)=(0,-1)\pmod{3}.$ It follows
that $\chi(x_0)=1$ and $\chi(x_1)=7$, so $x_0=x_1^{-1}=x_8$, a contradiction.
\end{subcase}
\end{case}

%%%%%%
\begin{case}[B]$m=1$ and $\Core\varphi = \langle a^6 \rangle$.\par
In this case, the quotient map $\bar M=\CM(\overline{\SD}_{24},\bar X,\bar p)$ is a regular Cayley 
map  of skew-type $3$ on the dihedral group $\overline{\SD}_{24} \cong \mathrm{D}_{12}.$
Thus, by Proposition~\ref{D2n} and Lemma~\ref{auto}(b), either 
\[
 \bar p=(\bar a^3,\bar a\bar b,\bar a^5,\bar a^3\bar b,\bar a,\bar a^5\bar b)
 \quad\text{or}\quad\bar p=(\bar a,\bar a^{-1},\bar a\bar b,\bar a^{-1}\bar b)
 \]
 or their conjugates:
 \[
  \bar p=(\bar a^3,\bar a^2\bar b,\bar a^5,\bar a^4\bar b,\bar a, \bar b)\quad\text{or}\quad\bar p=(\bar a,\bar a^{-1},\bar a^2\bar b,\bar b).
 \]

\begin{subcase}[B.1]$\bar{p} = (\bar{a}^3, \bar{a}\bar{b}, \bar{a}^5, \bar{a}^3\bar{b}, \bar{a}, \bar{a}^5\bar{b}).
$\par
Note that
\[
\bar{\pi}(\bar{a}^3) = \bar{\pi}(\bar{a}^5\bar{b}) = 1,\quad
\bar{\pi}(\bar{a}) = \bar{\pi}(\bar{a}\bar{b}) = 3 \quad
\text{and}\quad 
\bar{\pi}(\bar{a}^5) = \bar{\pi}(\bar{a}^3\bar{b}) = 5.
\]
As before, we see that $\Ker \varphi = \langle a^3, a^2b \rangle $ and $|X| = 6k$ for some $k\geq 1$. Since
the order of $\SD_{24}$ is twice the order of $\mathrm{D}_{12}$, we have $ k = 1$ or $2.$ 

If $k=1$, then we can set $ p = (c^3, x, c^{-1}, y, c, z) $, where
\[
\bar{c}^3 = \bar{a}^3,\quad 
\bar{x} = \bar{a}\bar{b},\quad 
\bar{y} = \bar{a}^3\bar{b}\quad 
\text{and}\quad 
\bar{z} = \bar{a}^5\bar{b}.
\]
It is easy to see that $|x| = |y| = |z| = 4, $ (cf. \eqref{Order}), so $X \neq X^{-1},$ a contradiction.

Thus, $k=2$, and we can set 
\[
p = (x_0, x_1, x_2, x_3, x_4, x_5, x_6, x_7, x_8, x_9, x_{10}, x_{11}) ,
\]  
 where $x_0 = a^3$ and
 \begin{align*}
 	\bar x_0 &= \bar x_6 = \bar{a}^3, &\bar x_1 &= \bar x_7 = \bar{a}\bar{b}, &\bar x_2& = \bar x_8 = \bar{a}^5, \\
 	\bar x_3 &= \bar x_9 = \bar{a}^3\bar{b},&\bar x_4 &= \bar x_{10} = \bar{a}, &\bar x_5& = \bar x_{11} = \bar{a}^5\bar{b}.
 \end{align*}
Since $\Core\varphi=\langle a^6\rangle=\{1,a^6\}$, we have $x_6=a^9$, and so
$x_0 = x_6^{-1}$. Similarly, $x_1,x_7\in\{ab,a^7b\}$ and $ x_1 = x_7^{-1};$ $x_3,x_9\in\{a^3b,a^9b\}$ and $x_3 = x_9^{-1}$;
$x_5,x_{11}\in\{a^5b,a^{11}b\}$ and  $ x_5 = x_{11}^{-1}.$ 
For the remaining elements, there are two possibilities:
either (a) $ x_2 = x_{10}^{-1}$ and $x_4 = x_8^{-1} $, or (b) $ x_2 = x_4^{-1} $ and $x_8 = x_{10}^{-1}.$

In Case (a),  using \eqref{Dist}, it is easy to compute:
\begin{align*}
\pi(x_0) &= \pi(x_5) = \pi(x_6) = \pi(x_{11}) = 1,\\
\pi(x_1) &= \pi(x_4) = \pi(x_7) = \pi(x_{10}) = 3,\\
\pi(x_2) &= \pi(x_3) = \pi(x_8) = \pi(x_9) = 11.
\end{align*}
If $ x_1 = ab $ and $ x_2 = a^5, $  then 
\[
x_4 = a ,\quad x_7 = a^7 b,\quad x_8 = a^{11}\quad \text{and}\quad x_{10} = a^7.
\]
It remains to determine $ x_3,x_5, x_9 $ and $ x_{11} $. We have
\[
a^5 = \varphi(ab) = \varphi(a)\varphi^3(b) = \varphi(a)\varphi(b) = \varphi(a) a^6b, 
\]
so $x_5=\varphi(x_4)=\varphi(a) = a^{11}b.$ Similarly, $x_9 = a^3b,$ $x_5=a$ and $x_{11}= a^5 b$. Therefore,
\[
p = (a^3, ab, a^5, a^9 b, a, a^{11} b, a^9, a^7 b, a^{11}, a^3 b, a^7, a^5 b).
\]
Similarly, if $ x_1 = ab $ and $ x_2 = a^{11} ,$ then $x_4 = a^7$, $x_7 = a^7 b$, $x_8 = a^5$ and $ x_{10} = a$, and so
\[
p = (a^3, ab, a^{11}, a^3 b, a^7, a^{11} b, a^9, a^7 b, a^5, a^9 b, a, a^5 b).
\]
If $ x_1 = a^7 b$ and $x_2 = a^5,$ then
\[
p = (a^3, a^7 b, a^5, a^3 b, a, a^5 b, a^9, ab, a^{11}, a^9 b, a^7, a^{11} b).
\]
If $ x_1 = a^7b$ and $x_2 = a^{11}$, then 
\[
p = (a^3, a^7 b, a^{11}, a^9 b, a^7, a^5 b, a^9, ab, a^5, a^3 b, a, a^{11} b).
\]
However, in each case $p$ cannot extend to a skew morphism, since
\[
\varphi(a^2b) = \varphi(a)\varphi^3(ab) \neq \varphi(a^{-1})\varphi^{11}(ab) = \varphi(a^2b).
\]

In Case (b),  we have
\[
\pi(x_0) = \pi(x_5) = \pi(x_6) = \pi(x_{11}) = 1,\]
\[\pi(x_1) = \pi(x_4) = \pi(x_7) = \pi(x_{10}) = 9,\]
\[
\pi(x_2) = \pi(x_3) = \pi(x_8) = \pi(x_9) = 5.
\]
As in the previous case, we get $p=p_1,p_2,p_3$ or $p_4$, where
\begin{align*}
p_1 &= (a^3, ab, a^5, a^3b, a^7, a^5b, a^9, a^7b, a^{11}, a^9b, a, a^{11}b),\\
p_2 &= (a^3, ab, a^{11}, a^9b, a, a^5b, a^9, a^7b, a^5, a^3b, a^7, a^{11}b),\\
p_3 &= (a^3, a^7b, a^5, a^9b, a^7, a^{11}b, a^9, ab, a^{11}, a^3b, a, a^5b),\\
p_4 &= (a^3, a^7b, a^{11}, a^3b, a, a^{11}b, a^9, ab, a^5, a^9b, a^7, a^5b).
\end{align*}
However ,  $p_2$ and $p_4$ cannot extend to a skew morphism, since
\[
\varphi(a^{10})=\varphi(a^9)\varphi(a)=a^8\neq \varphi(a)\varphi^9(a^9)=a^2.
\]
It is easy to see that if $\delta$ is the automorphism  of $\SD_{24}$ taking $a\mapsto a$ and $b\mapsto a^6b$,
then $ p_1^{\delta}=p_3 $,  so up to map isomorphism, we may assume $p=p_1$.
\end{subcase}

\begin{subcase}[B.2]$\bar{p} = (\bar{a}^3, \bar{a}^2\bar{b}, \bar{a}^5, \bar{a}^4\bar{b}, \bar{a}, \bar{b})$.\par
We may set $|X|=6k$ for $k=1,2$. Then
 $\bar {a}^3, \bar b \in \Ker \overline{\varphi}$, and hence $\Ker \varphi=\langle a^3,b\rangle$.

For $k=1$, we set $p=(x_0,x_1,x_2,x_3,x_4,x_5)$ where 
\[
\bar x_0 = \bar{a}^3,\quad 
\bar x_1 = \bar{a}^2\bar{b},\quad 
\bar x_2 = \bar{a}^5,\quad 
\bar x_3 = \bar {a}^4\bar{b}, \quad 
\bar x_4 = \bar{a},\quad
\text{and} \quad \bar x_5=\bar{b}.
\]
Since $\Core \varphi=\langle a^6 \rangle$, $|x_1|=|x_3|=|x_5|=2$ and  $|x_0|=|x_2|=|x_4|=4$, then $X \neq X^{-1}$, a contradiction.

For $k=2$,  we set 
\[
p = (x_0, x_1, x_2, x_3, x_4, x_5, x_6, x_7, x_8, x_9, x_{10}, x_{11}) ,
\]  
where $x_0 = a^3$ and
\begin{align*}
	\bar x_0 &= \bar x_6 = \bar{a}^3, &\bar x_1 &= \bar x_7 = \bar{a}^2\bar{b}, &\bar x_2& = \bar x_8 = \bar{a}^5, \\
	\bar x_3 &= \bar x_9 = \bar{a}^4\bar{b},&\bar x_4 &= \bar x_{10} = \bar{a}, &\bar x_5& = \bar x_{11} = \bar{b}.
\end{align*}
We have $|x_1|=|x_3|=|x_5|=|x_7|=|x_9|=|x_{11}|=2$ and $x_0=x_6^{-1}$, so $\chi(x_0)=6$. We calculate $\pi(x_0)=0-6+1=-5 \pmod{12}$. However, $x_0 \in \Ker \varphi$, a contradiction.
\end{subcase}

\begin{subcase}[B.3]$\bar p=(\bar a,\bar a^{-1},\bar a\bar b,\bar a^{-1}\bar b)$.\par
Note that $\bar a\bar b \in \Ker \bar{\varphi}$, so $\Ker \varphi=\langle a^3,ab\rangle$.
We have
\[
\overline{\varphi}=(\bar 1)(\bar b)(\bar a^3)(\bar a^3\bar b)(\bar a,\bar a^{-1},\bar a\bar b,\bar a^{-1}\bar b)(\bar a^4,\bar a^2,\bar a^4\bar b,\bar a^2\bar b).
\]
Thus, $\bar a^3$ is a fixed point of $\overline{\varphi}$, and so $\Core \varphi=\langle a^3\rangle$, a contradiction.
\end{subcase}
\begin{subcase}[B.4]$\bar p=(\bar a,\bar a^{-1},\bar a^2\bar b,\bar b)$.\par
We have
\[
\overline{\varphi}=(\bar 1)(\bar a\bar b)(\bar a^3)(\bar a^4\bar b)(\bar a,\bar a^{-1},\bar a^2\bar b,\bar b)(\bar a^4,\bar a^2,\bar a^5 \bar b,\bar a^3\bar b).
\]
Thus, $\bar a^3$ is a fixed point of $\overline{\varphi}$, and so $\Core \varphi=\langle a^3\rangle$, again a contradiction.
\end{subcase}
\end{case}
In summary, if $\Core\varphi = \langle a^3 \rangle,$ then $M\cong\CM(\SD_{24},X,p)$, 
where $p = (a, a^{-1}, a^4b, a^8b)$, while if $\Core\varphi = \langle a^6 \rangle,$
then $M\cong\CM(\SD_{24},X,p)$, where  
\[
p=(a^3, ab, a^5, a^3b, a^7, a^5b, a^9, a^7b, a^{11}, a^9b, a, a^{11}b).
\]

We have verified in Theorem~\ref{Class1} that these maps are indeed regular of skew-type $3$. 
Therefore, the proof is completed.
\end{proof}

%%%%%%%%%%
\begin{theorem}\label{Main2}
If $M=\CM(\SD_{8n}, X,p)$ is a regular Cayley map of skew-type $3$
on the semidihedral group $\SD_{8n}$ $(n>3)$, then it isomorphic to one of the maps $M(n,t)$
constructed in Theorem~\ref{Class2}.
\end{theorem}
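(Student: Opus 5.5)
We follow the strategy outlined at the start of this section. Let $\varphi$ be the Cayley skew morphism of $M$, and write $n=3m$. Since $n>3$ we have $m\ge2$, so Lemma~\ref{lemma Core} gives $\Core\varphi=\langle a^3\rangle$. By Lemma~\ref{prop:quotient-skewtype}, the quotient $\overline{\varphi}$ on $\overline{\SD}_{8n}\cong\mathrm{D}_6$ has skew-type $3$. Lemma~\ref{auto}(a) says every automorphism of $\mathrm{D}_6$ lifts, so after replacing $M$ by an isomorphic map we may assume $\bar p=(\bar a,\bar a^{-1},\bar a\bar b,\bar a^{-1}\bar b)$. Proposition~\ref{Period} then gives period $4$. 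Exactly as in Case A of Theorem~\ref{Main1}, we get $\Ker\varphi=\langle a^3,ab\rangle$, $|X|=4k$, and the labelling \eqref{Perm}, \eqref{Inverse} with projections \eqref{Proj1}, \eqref{Proj2}. Hence $x_{4i},x_{4i+1}$ are of $a$-type and $x_{4i+2},x_{4i+3}$ are of $b$-type.

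Next we determine $\chi$ and $\pi$ on $X$, again following Subcase A.2. Since $x_{4i+2}\in\Ker\varphi$, we have $\pi(x_{4i+2})=1$, and skew-type $3$ forces \eqref{Form3}. The $b$-type elements split by order via \eqref{Order}. If all of $x_{4i+2},x_{4i+3}$ are involutions, Lemma~\ref{CD}(c) gives common difference $d=0$ and $\chi(x_{4i+2})=\chi(x_{4i+3})=0$. We must rule out the mixed order-$4$ configuration that produced the exceptional map for $n=3$. For this we use that $\Ker\varphi\cap\langle a\rangle b$ now contains many $b$-type elements, and that $\varphi$ restricted to $\langle a^3\rangle$ is an automorphism, so $\varphi(a^3)=a^{3s}$. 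Applying Lemma~\ref{CD}(a) to an involution and an order-$4$ element with the same $\av$ should give a contradiction unless $d=0$. With $d=0$, the $a$-type part satisfies $x_{4i+1}=x_{4i}^{-1}$, so $\chi(x_{4i})=1$ and $\chi(x_{4i+1})=4k-1$. Then \eqref{Dist} yields $\pi(x_{4i})=4k-1$, $\pi(x_{4i+1})=2$, $\pi(x_{4i+3})=2$ modulo $4k$. The identity $\pi(a^{-1}b)=\sum\pi(\varphi^i(b))$ from Proposition~\ref{Formula}(b) then forces consistency conditions on $k$, namely that $k$ is odd. This gives the two patterns for $\pi$ on the coset $\langle a^3,ab\rangle a^{-1}$, matching the values $k+1$ or $3k+1$ found in Theorem~\ref{Class2}, once the true order is accounted for.

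Then we determine $p$ explicitly. Since $\varphi^4$ restricted to $\langle a\rangle$ is an automorphism on the core-compatible part, we write $\varphi^4(x_0)=x_4=x_0^{t'}$ with $t'\in\mathbb{Z}_{4n}^*$. Applying an automorphism $\delta_{u,v}$, we normalise $x_0=a$ and $x_2=a^{t+3}b$ (this uses $\delta_{1,2}$-transitivity on $b$-type elements of given order). Then the skew morphism identities $\varphi(ab)=\varphi(a)\varphi^{\pi(a)}(b)$ and $\varphi(a^3)=\varphi(a)^3$-type relations (the latter from \eqref{cubic2}-style multiplicativity on $\langle a^3\rangle$) determine $x_1=a^{-t}$, $x_3=a^{-t^3+3+2n}b$, and by induction $x_i$ as in \eqref{Perm0}. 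From these we read off $t\equiv1\pmod 6$, which comes from $\bar x_4=\bar a$ and the parity and order constraints, and that $k=o_{4n}(t)$ is odd. The resulting $p$ is that of $M(n;t)$.

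The main obstacle is the middle step: excluding the configuration with order-$4$ $b$-type elements (equivalently showing $d=0$) and pinning down the power function values for general $k$. Unlike the $n=3$ case we cannot bound $k$ by $|\Core\varphi|$ and enumerate. I would first try to exploit that $\varphi|_{\langle a^3\rangle}$ is an automorphism: combine $\varphi(a^3g)=\varphi(a^3)\varphi(g)$ with Proposition~\ref{Lift} applied to $X_2,X_3$ to force all $b$-type elements of $X$ to be involutions.
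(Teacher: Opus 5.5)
Your opening steps agree with the paper: core $\langle a^3\rangle$, normalised $\bar p=(\bar a,\bar a^{-1},\bar a\bar b,\bar a^{-1}\bar b)$, $\Ker\varphi=\langle a^3,ab\rangle$, $|X|=4k$ and period $4$. The middle step, however, is not merely unfinished; it is wrong. A common difference $d=0$ only says $\chi(x_{4i})=\chi(x_0)$; it does not give $x_{4i+1}=x_{4i}^{-1}$. Already in $M(n;t)$ with $k>1$ one has $x_1=a^{-t}\neq a^{-1}$, and in fact $\chi(x_0)=k$ or $3k$ there. Your values $\pi(x_{4i})=-1$ and $\pi(x_{4i+1})=2$, combined with $\pi(a^{-1}b)=\pi(a^{-1})\pi(b)$, give $2\equiv-2\pmod{4k}$, i.e.\ $k=1$. (That identity is valid because $\pi$ is constant on the $\varphi$-orbit of $b$, which lies in $b\langle a^3\rangle$, and $\pi(b)=\pi(a)$.) So your route would wrongly discard the whole family with $k>1$, rather than show that $k$ is odd.

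The paper instead keeps free parameters $\chi(x_0)=4l_0+1$, $\chi(x_1)=4l_1-1$ and $\chi(x_{4i+2})=\chi(x_{4i+3})=4l$. The same identity then produces an odd integer $\equiv0\pmod k$, so $k$ is odd. That parity is exactly what excludes the all-order-$4$ configuration: the inverse-closed set $X_2$ of size $k$ would contain no involutions. The mixed configuration is excluded by a separate computation with $d$ that leads to $d\equiv-4\pmod{4k}$, hence $x_6=x_2^{-1}$, which is impossible. Your ``should give a contradiction unless $d=0$'' supplies neither argument. In the all-involution case, the paper derives $\chi(x_{4i})+\chi(x_{4i+1})\equiv0$ from $\pi(x_{4i+1})=\pi(x_{4i-1})$ and sets $\chi(x_0)=4l+1$. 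Combining $\pi(a^{-1}b)$ with $\pi(b^2)=1$ then gives $4l+1\equiv0\pmod k$, so $\chi\equiv0\pmod k$ on all of $X$.

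Your determination of $p$ is also missing its key idea. The ``$\varphi(a^3)=\varphi(a)^3$-type relations'' are false here, since in $M(n;t)$ one has $\varphi(a^3)=a^{2n}\varphi(a)^3$. Nothing in your sketch ties $x_4=a^{t^4}$ to $x_1=a^{-t}$ or shows that the orbit length is $4\,o_{4n}(t)$. Also, $x_2$ should be $a^{t^2+3}b$, not $a^{t+3}b$.

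The paper uses $\chi\equiv0\pmod k$ to show that $\varphi^k$ is a skew morphism, and factors $\varphi=\alpha\beta$. Here $\alpha=\varphi^k$ or $\varphi^{3k}$ is a valence-$4$ Cayley skew morphism of skew-type $3$, hence of the explicit form found in the case $k=1$. The factor $\beta=\varphi^{3k+1}$ or $\varphi^{k+1}$ is a power of the automorphism $\varphi^4$, has order $k$, and commutes with $\alpha$. Writing $\beta(a)=a^t$, $\beta(b)=a^sb$ and evaluating $\alpha\beta=\beta\alpha$ on $b$, $ab$ and $a^{-1}b$ gives $t\equiv1\pmod 6$ and $s\equiv3-3t$. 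This determines $\varphi$, and after the normalisation $\delta(a)=x_0$, $\delta(b)=b$, it gives the rotation $p$ of $M(n;t)$. This reduction to the valence-$4$ case is what your inductive determination of the $x_i$ lacks.
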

%%%%%%
\begin{proof}
Assume  that $ M = \CM(\SD_{8n},X,p)  $ is a regular Cayley map of skew-type $3$
on the semidihedral group $\SD_{8n}$ for some $n>3$. 
Let $ \varphi $ be the corresponding skew morphism of $ M $ and $\pi$  the associated power function. 
 By Lemma \ref{lemma Core}, $n=3m$ $(m\geq 2)$ and 
 $\Core\varphi = \langle a^3 \rangle $, and by Lemma~\ref{quotient},  the quotient map 
 $\overline{M} = \CM(\overline{\SD}_{8n}, \bar{X}, \bar{p})$ is a
regular Cayley map of skew-type $3$ on the dihedral group $
\overline{\SD}_{8n} = \SD_{8n} / \Core\varphi \cong \mathrm{D}_6.$
By Proposition~\ref{D2n} and Lemma~\ref{auto}(a), we may set
\[
\bar{p} = (\bar{a}, \bar{a}^{-1}, \bar{a}\bar{b}, \bar{a}^{-1}\bar{b}).
\]
As before, we have $\Ker \varphi = \langle a^3, ab \rangle$ and $ |X| = 4k$ for some integer $k\geq1$. 
Using the notation in \eqref{Perm} and \eqref{Inverse},  we see that, for all $i\in\mathbb{Z}_k$,
\begin{align}
&\bar {x}_{4i} = \overline{a}, &&\bar {x}_{4i+1} = \overline{a}^{-1},&&\bar {x}_{4i+2} = \overline{a}\,\overline{b},&&
\bar {x}_{4i+3} = \overline{a}^{-1}\overline{b};\notag\\
&\bar\pi(\bar {x}_{4i}) \equiv 3, &&
\bar\pi(\bar {x}_{4i+1}) \equiv 2,&&
\bar\pi(\bar {x}_{4i+2}) \equiv 1,&&
\bar\pi(\bar {x}_{4i+3}) \equiv 2\pmod{4}.\label{QPower}
\end{align}
 We distinguish two cases.

%%%%%%%%%%%%%%%%%%%%
\begin{case}[A] $k=1$.\par
In this particular case, as in the proof of Theorem~\ref{Main1},  we may set $p = (a, a^{-1}, a^{3i+1}b, a^{3j-1}b)$, 
where $i,j$ are odd numbers with $3i-6mi\equiv 3j\pmod{12m}$. Since $i$ is odd, we may set $i=2s+1$, so that $a^{3i+1}b=a^{6s+4}b$ and $a^{3j-1}b=a^{6s+2+6m}b$.
Now the automorphism $\delta\in\SD_{8n}$ with $\delta(a)=a$ and $\delta(b)=a^{6s}b$ takes $(a,a^{-1},a^4b,a^{6m+2}b)$
to $(a,a^{-1},a^{6s+4}b,a^{6s+2+6m}b)$. Thus, we may assume $p=(a,a^{-1},a^4b,a^{6m+2}b)$ up to map isomorphism.

\end{case}

%%%%%%%%%%%
\begin{case}[B]$ k \geq 2 .$\par
As before, we see that $ab\in \Ker \varphi$, so $X_2\subseteq\Ker \varphi$, namely,  $\pi(x_{4i+2})=1$ for all $i$. 
By~\eqref{QPower}, we also see that, $\pi(x_{4i})\neq 1$ is odd, both $\pi(x_{4i+1})$ and $\pi(x_{4i+3})$ are even.
Since $\pi$ takes three distinct values in $\mathbb{Z}_{4k}$, we must have
\begin{align}
	\pi(x_0^{-1})=\pi(x_{4i+1})=\pi(x_{4i+3})\pmod{4k}.
\end{align}
Since $\pi(x_{4i+2})=1$, using \eqref{Dist} we get $\chi(x_{4i+2})=\chi(x_{4i+3}).$
 Since 
 \begin{align*}
\chi(x_{4i}) \equiv 1,\quad\chi(x_{4i+1}) \equiv -1,\quad \chi(x_{4i+2}) \equiv 0, \quad\chi(x_{4i+3}) \equiv 0 \pmod{4},
\end{align*}
we may assume 
\[
\chi(x_0) \equiv 4l_0+1 ,\quad \chi(x_{1}) \equiv 4l_1-1 ,\quad \chi(x_{4i+2})=\chi(x_{4i+3})\equiv 4l \pmod{4k},
\]
where $l_0,l_1,l\in\mathbb{Z}_k$. Then, again using \eqref{Dist}, we get
 \begin{align*}
 \pi(b)&=\pi(a)=\pi(x_0) \equiv  4l_1-4l_0-1 \pmod{4k},\\
  \pi(a^{-1})&=\pi(x_{1}) \equiv 4l-4l_1+2 \pmod{4k}.
  \end{align*}
Thus, from $\pi(a^{-1})=\pi(a^{-1}b)$ and using Lemma~\ref{Formula} we deduce that
\[
4l-4l_1+2=\pi(a^{-1}b) \equiv \sum_{i=0}^{\pi(a^{-1})-1} \pi(\varphi^i(b)) \equiv (4l_1-4l_0-1)(4l-4l_1+2) \pmod{4k},
\]
which reduces to $4(l_1-l_0)-2(l-l_1)+2(l_1-l_0)-1 \equiv 0 \pmod{k}.$ Thus, $k$ is odd.

 By Lemma~\ref{CD}, we may set $\chi(x_{i+4})-\chi(x_i) \equiv d \pmod{4k}$ for all $i$.
Since both $x_{4i+2}$ and $x_{4i+3}$ are elements of $b$-type, by \eqref{Order}, 
 $|x_{4i+2}| = |x_{4i+3}| = 2$ or $4$ for any $i$. We distinguish three subcases.
 
 %%%%%%%%%%%%%%%%
\begin{subcase}[B.1]  $|x_{4i+2}| = |x_{4i+3}| = 4$ for all $i$.\par
In this subcase, by Lemma~\ref{Lift}, the subset $X_2:=\{x_i\mid i\equiv 2\pmod{4}\}$ is closed under inverses 
(and contains no involutions), so $k=|X_2|$ is even, a contradiction. 
\end{subcase}
 
 %%%%%%%%%%%%
 \begin{subcase}[B.2]$|x_{4i+2}| = |x_{4i+3}| = 2$ for all $i$.\par
By evaluating $\pi(x_0^{-1})$ in two ways:
\begin{align*}
\pi(x_0^{-1}) &= \pi(x_{4i-1}) 
\equiv \chi(x_{4i}) - \chi(x_{4i-1}) + 1 
= \chi(x_{4i}) + 1,\\
\pi(x_0^{-1}) &= \pi(x_{4i+1}) 
= \chi(x_{4i+2}) - \chi(x_{4i+1}) + 1 
=-\chi(x_{4i+1}) + 1,
\end{align*}
we get
\begin{equation}\label{chivalue}
\chi(x_{4i}) + \chi(x_{4i+1}) \equiv 0 \pmod{4k}.
\end{equation}
Thus, we may assume that
\begin{align}\label{Form4}
	 \chi(x_0) = 4l + 1 \quad \text{and} \quad \chi(x_{1})  = 4k - 4l - 1,
\end{align}
where  $ l \in\mathbb{Z}_k,$ and so  $\pi(x_0) = -8l - 1 $ and 
$ \pi(x_1) =  4l + 2.$
Recall that $\pi(x_2)=1$ and $\pi(x_3)=\pi(x_1)$. Since $\pi$ has period $4$, we get
\[
\pi(x_i)=
\begin{cases}
-8l-1, & i\equiv0\pmod{4},\\
4l+2, &  i\equiv1\pmod{4},\\
1, &  i\equiv2\pmod{4},\\
4l+2, &  i\equiv1\pmod{4}.
\end{cases}
\]
Therefore, for all $i\in\mathbb{Z}_k$, we have 
\[
\sum_{j=0}^3\pi(x_{i+j})=(-8l-1)+(4l+2)+1+(4l+2)\equiv 4\pmod{4k}.
\]
Since $\SD_{8n}=\langle X\rangle$, we deduce that $\varphi^4$ is an automorphism of $\SD_{8n}$.

Moreover, since $\bar b$ is a fixed point of $\overline{\varphi}$, the orbit $O_b$ is smooth 
so that $\pi(\varphi^i(b)) = \pi(b) \equiv 4l + 2$ for any integer $i$. It follows that
\begin{align*}
4l+2&=\pi(a^{-1}b) \equiv (4l + 2)\big(2(-4l - 1) + 1\big)\pmod{4k},\\
 1 &= \pi(b^2) \equiv \big(2(-4l - 1) + 1\big)^2  \pmod{4k}.
\end{align*}
or equivalently, $8l^2 + 6l + 1 \equiv 0 \pmod{k}$ and $16l^2 + 4l \equiv 0 \pmod{k}.$ 
Since $k$ is odd, the second congruence is reduced to
 $ 4l^2 + l \equiv 0 \pmod{k}.$  Therefore,  
 \[
 4l + 1 \equiv 8l^2 + 6l + 1 - 2(4l^2 + l) \equiv 0 \pmod{k},
 \]
 and hence, we deduce from \eqref{Form4} that $\chi(x_0)=\chi(x_1)=0\pmod{k}$. By Lemma~\ref{CD}, the 
 common difference of $\chi$ is $d=0$, thus, $ \chi(x_{4i})\equiv\chi(x_{4i+1})=0\pmod{k}$
for all $i\in\mathbb{Z}_k$. 
Consequently, $\chi(x_i)\equiv0\pmod{k}$ for any $i\in\mathbb{Z}_k$. It follows that 
\[
\sum_{j=0}^{k-1}\pi(x_{i+j})=\sum_{j=0}^{k-1}(\chi(x_{i+j+1})-\chi(x_{i+j})+1)\equiv 0\pmod{k}
\]
for all $x_i\in X$. Therefore, $ \varphi^k $ is a skew morphism on $ \SD_{8n} $. Recall that $ k $ is odd, 
 $k \equiv 1$ or $ -1 \pmod{4} .$ If we set 
\[
\alpha = 
\begin{cases} 
	\varphi^k, & k \equiv 1 \pmod{4}, \\
	\varphi^{3k}, & k \equiv -1 \pmod{4}, 
\end{cases} 
\quad \text{and} \quad 
\beta = 
\begin{cases} 
\varphi^{3k+1}, & k \equiv 1 \pmod{4}, \\
\varphi^{k+1}, & k \equiv -1 \pmod{4}.
\end{cases}
\]
It follows that $\beta$ is an automorphism of $\SD_{8n}$ and $\varphi = \alpha\beta$. Note that $\CM(\SD_{8n}, Y, \alpha|_Y) $
is a regular Cayley map of skew-type 3 and valence 4, where  $ Y $ is the $ \alpha $-orbit of $ a $.
By the result obtained in Case (A), up to map isomorphism, we may set
\begin{equation}\label{alpha}
\alpha(a^i b^j) = 
\begin{cases} 
	a^{(2m-1)i+6j}b^j, & i \equiv 0 \pmod{3},\\
	a^{(2m-1)i-2m+6j}b^j, & i \equiv 1 \pmod{3},\\
	a^{(2m-1)i+2m+3}b^{j+1}, & i \equiv -1 \pmod{3}.
	\end{cases}
\end{equation}
On the other hand,  we may assume $\beta\in\Aut(\SD_{8n}))$ has the form
\[
\beta(a) = a^t\quad\text{and}\quad \beta(b) = a^s b,
\] 
where $t, s \in\mathbb{Z}_{4n},$ $\gcd(t, 4n) = 1$, and $s$ is even. Since $|\beta|=k$, we have
\[
a=\beta^k(a)=a^{t^k}\quad\text{and}\quad b=\beta^k(b)=a^{s\sum_{i=1}^kt^{i-1}}b.
\]
Thus, $t^k \equiv 1 \pmod{4n}$. Recall that $3|n$ and $k$ is odd, it follows that $t \equiv 1 \pmod{3}$.
On the other hand,  since $\bar b=\overline{\beta(b)}= \bar a^s\bar b$, we have $s \equiv 0 \pmod{3}.$ 
Now we use $ \alpha \beta = \beta \alpha $   to compute:
\begin{align*}
&a^{6t+s}b =\beta (a^{6}b)\stackrel{\eqref{alpha}}= \beta \alpha(b)=\alpha \beta(b)  =\alpha (a^s b)\stackrel{\eqref{alpha}}= a^{(2m-1)s+6}b, \\
&a^{5t+s}b =\beta(a^5b) =\beta \alpha(ab) = \alpha \beta(ab) = \alpha(a^{t+s}b) = a^{(2m-1)(t+s)-2m+6}b, \\
& a^{4t}= \beta (a^4) =\beta \alpha(a^{-1}b) =\alpha \beta(a^{-1}b) = \alpha (a^{-t+s}b) = a^{(2m-1)(s-t)+2m+3}. \\
\end{align*}	 
Thus,
\begin{align*}
&(2m-2)s-6t+6\equiv0\pmod{12m},\\
&(2m-2)s+(2m-6)t-2m+6\equiv0\pmod{12m},\\
&(2m-1)s-(2m+3)t+2m+3\equiv0\pmod{12m}.
\end{align*}
The first two congruences give $2mt\equiv 2m\pmod{12m}$, or equivalently, $t \equiv 1 \pmod{6}$.
Upon substitution, the third one reduces to $(2m-1)s\equiv 3(t-1)\pmod{12m}$, which together with 
the first gives $s \equiv 3 - 3t \pmod{12m} $. Therefore,  for all $ a^i b^j \in \SD_{8n} $,  
\[
\varphi(a^i b^j)  = \alpha (a^{it+(3-3t)j} b^j) \stackrel{\eqref{alpha}}=
\begin{cases} 
	a^{(2m-1)t i - 3(2m-1)t j + 3(2m-1)j + 6j} b^j, & i \equiv 0 \pmod{3}, \\
	a^{(2m-1)t i - 3(2m-1)t j + 3(2m-1)j - 2m + 6j} b^j, & i \equiv 1 \pmod{3}, \\
	a^{(2m-1)t i - 3(2m-1)t j + 3(2m-1)j + 2m + 3} b^{j+1}, & i \equiv -1 \pmod{3}.
\end{cases}
\]
Recall that $\bar a=\bar c$, so $c=a^{3e+1}$ for some integer $e$. 
Therefore, $\SD_{8n}=\langle X \rangle = \langle c,b\rangle$, so $ c $ is a generator of $ \langle a \rangle $. We
may choose $\delta \in \Aut(\SD_{8n}) $ with $\delta(a) = c$ and $\delta(b) = b$, so that $p$
has the the stated form (see~\eqref{Perm0}). Consequently, $M \cong M(n,t).$
 \end{subcase}

%%%%%%%%%%%
\begin{subcase}[B.3]$|x_{4i+2}| = |x_{4i+3}| =2 $ and $|x_{4j+2}| = |x_{4j+3}| = 4,$ for some distinct $i,j$.\par
In this subcase,  we remark that there exist no consecutive numbers $i$ and $i+1$ with 
$|x_{4i+2}| = |x_{4i+3}| = 2$ and $|x_{4(i+1)+2}| = |x_{4(i+1)+3}| = 2$, otherwise, 
we would have the common difference $d=0$, so $|x_{4i+2}| = |x_{4i+3}| = 2$ for all $i$, a contradiction.

Without loss of generality, we assume $|x_2|=|x_3|=2$ and $|x_6|=|x_7|=|x_{4k-2}|=|x_{4k-1}|=4.$ Then $\chi(x_2)=\chi(x_3)=0$.
Since
\begin{align*}
	\chi(x_{4i}) \equiv 1 \pmod{4}\quad\text{and}\quad\chi(x_{4i+1}) \equiv -1 \pmod{4},
\end{align*}
we may set set $\chi(x_0)=4l_0+1$ and $\chi(x_1)=4l_1-1$, where $l_0,l_1\in\mathbb{Z}_k$. By Lemma~\ref{CD}, we have
\[
\chi(x_{4i})=di+4l_0+1,\quad\chi(x_{4i+1})=di+4l_1-1,\quad \chi(x_{4i+2})=\chi(x_{4i+3})=di\pmod{4k},
\]
for all $i\in\mathbb{Z}_k$. It follows that
\[
2-4l_1\stackrel{\eqref{Dist}}=\pi(x_1)=\pi(x_3)\stackrel{\eqref{Dist}}=d+2+4l_0\pmod{4k},
\]
so $d+4(l_0+l_1)\equiv0\pmod{4k}$.  Since $\varphi$ has period $4$, by Lemma~\ref{CD}, $d \equiv 0\pmod{4}.$ 
Now rewrite  $k=2s+1$ (recall that $k$ is odd), then
\begin{align*}
\chi(x_{4(s+1)})+\chi(x_{4(s+1)+1})&=(d(s+1)+4l_0+1)+(d(s+1)+4l_1-1)\\
&=d(2s+1)+d+4(l_0+l_1)\equiv0\pmod{4k},
\end{align*}
so $x_{4(s+1)}^{-1}=x_{4(s+1)+1}$. Thus, $\chi(x_{4(s+1)})=1$ and $\chi(x_{4(s+1)+1})=-1\pmod{4k}$. Therefore,
$\pi(b)\equiv\pi(a)\equiv\pi(x_0)=\pi(x_{4(s+1)})=-1$ and 
\[
\pi(a^{-1})\equiv\pi(x_1)=\pi(x_{4(s+1)+1})=\chi(x_{4(s+1)+2})-\chi(x_{4(s+1)+1})+1=d(s+1)+2
\]
Now
\[
d(s+1)+2=\pi(a^{-1})=\pi(a^{-1}b)=\pi(a^{-1})\pi(b)=-(d(s+1)+2)\pmod{4k},
\]
which reduces to $2d(s+1)+4\equiv0\pmod{4k}$, or equivalently, $d(2s+1)+d+4\equiv0\pmod{4k}$. Since $k=2s+1$, 
this is equivalent to $d\equiv -4\pmod{4k}$. Thus, $\chi(x_6)=d=-4$, so $x_6^{-1}=x_2$. However, $x_2$ is an involution and 
$x_6$ is not, a contradiction.
\end{subcase}
\end{case}
\end{proof}

\end{document}